\documentclass{article}
\usepackage[utf8]{inputenc}
\usepackage[margin=2cm]{geometry}
\usepackage{amsmath,amsthm,amssymb,bm,graphicx}
\usepackage{natbib}
\usepackage[dvipsnames]{xcolor}
\usepackage{booktabs,makecell,float}
\usepackage{pifont}
\usepackage{xr}
\usepackage{authblk}
\usepackage[OT1]{fontenc}
\usepackage[colorlinks,citecolor=blue,urlcolor=blue]{hyperref}

\newcommand{\W}{(W_1,W_2)}
\newcommand{\V}{(V_1,V_2)}
\newcommand{\X}{(X_1,X_2)}
\newcommand{\Y}{(Y_1,Y_2)}

\renewcommand{\P}{\mbox{P}}
\newcommand{\E}{\mbox{E}}
\renewcommand{\d}{\mbox{d}}
\newcommand{\RV}{\mbox{RV}}

\renewcommand{\bar}{\overline}

\newcommand\ci{\perp\!\!\!\perp}

\newtheorem{prop}{Proposition}
\newtheorem{cor}{Corollary}
\newtheorem{definition}{Definition}
\newtheorem{lemma}{Lemma}
\newtheorem*{lemma*}{Lemma}
\theoremstyle{definition}
\newtheorem{example}{Example}
\theoremstyle{definition}
\newtheorem*{example*}{Example}
\theoremstyle{definition}
\newtheorem{remark}{Remark}

\begin{document}
\title{Extremal dependence of random scale constructions}

\author[1]{Sebastian Engelke}
\author[2]{Thomas Opitz}
\author[3]{Jennifer Wadsworth}
\affil[1]{\small Research Center for Statistics, University of Geneva, Boulevard du Pont d’Arve 40,
1205 Geneva, Switzerland, \texttt{sebastian.engelke@unige.ch}}
\affil[2]{Biostatistics and Spatial Processes, INRA, 84914, Avignon, France, \texttt{thomas.opitz@inra.fr}}
\affil[3]{Department of Mathematics and Statistics, Fylde College, Lancaster University, LA1 4YF, UK, \texttt{j.wadsworth@lancaster.ac.uk}}


\maketitle
\begin{abstract}
A bivariate random vector can exhibit either asymptotic independence or dependence between the largest values of its components. When used as a statistical model for risk assessment in fields such as finance, insurance or meteorology,  it is crucial to understand which of the two asymptotic regimes occurs. Motivated by their ubiquity and flexibility, we consider the extremal dependence properties of vectors with a random scale construction $(X_1,X_2)=R(W_1,W_2)$, with non-degenerate $R>0$ independent of $(W_1,W_2)$. Focusing on the presence and strength of asymptotic tail dependence, as expressed through commonly-used summary parameters, broad factors that affect the results are: the heaviness of the tails of $R$ and $(W_1,W_2)$, the shape of the support of $(W_1,W_2)$, and dependence between $(W_1,W_2)$. When $R$ is distinctly lighter tailed than $(W_1,W_2)$, the extremal dependence of $(X_1,X_2)$ is typically the same as that of $(W_1,W_2)$, whereas similar or heavier tails for $R$ compared to $(W_1,W_2)$ typically result in increased extremal dependence. Similar tail heavinesses represent the most interesting and technical cases, and we find both asymptotic independence and dependence of $(X_1,X_2)$ possible in such cases when $(W_1,W_2)$ exhibit asymptotic independence. 
The bivariate case often directly extends to higher-dimensional vectors and spatial processes, where the dependence is mainly analyzed in terms of summaries of bivariate sub-vectors.
The results unify and extend many existing examples, and we use them to propose new models that encompass both dependence classes.
\end{abstract}

\textbf{Keywords:} copula, extreme value theory, residual tail dependence, tail dependence.

\textbf{MSC2010:} 60G70, 60E05, 62H20

\section{Introduction}
\label{sec:Introduction}

A rich variety of bivariate dependence models have a pseudo-polar representation 
\begin{align}
 \X=R\W, \qquad R > 0,~\mbox{independent of}~ \W\in\mathcal{W} \subseteq \mathbb{R}^2, \label{eq:xconst}
\end{align}
where we term $R$ the radial variable, assumed to have a non-degenerate distribution, and $\W$ the angular variables. Indeed, many well-known copula families, including the elliptical, Archimedean, Liouville, and multivariate Pareto families have such a representation. In this work, our focus is on the upper tail dependence of such constructions. In particular, we examine whether a given $\X$ displays asymptotic dependence or asymptotic independence, and the strength of dependence within these classes. Our results are particularly useful for constructing new models with properties that reflect the challenges of real data in, for instance, finance, meteorology and hydrology. Specifically, it is often ambiguous whether data should be modeled using an asymptotically dependent or asymptotically independent distribution, and most families of distributions only exhibit one type of dependence. A vector $\X$ with $X_j\sim F_{X_j}$ is said to display asymptotic dependence if the limit
\begin{align}
 \chi_X = \lim_{q\to 1} \P\{X_1\geq F_{X_1}^{-1}(q), X_2\geq F_{X_2}^{-1}(q)\}/(1-q) \label{eq:chi}
\end{align}
exists and is positive; a limit of zero defines asymptotic independence. In~\eqref{eq:chi} and throughout, $F_{X_j}^{-1}$ denotes the (generalized) inverse of the distribution function $F_{X_j}$. The parameter $\chi_X$ is termed the (upper) tail dependence coefficient, and the value of $\chi_X\in(0,1]$ summarizes the strength of the dependence within the class of asymptotically dependent variables. Under asymptotic independence, a more useful summary is the rate at which the convergence to zero in equation~\eqref{eq:chi} occurs, and a widely satisfied assumption \citep{LedfordTawn1997} is
\begin{align}
\P\{X_1\geq F_{X_1}^{-1}(q), X_2\geq F_{X_2}^{-1}(q)\} = \ell(1-q)(1-q)^{1/\eta_X}, \qquad \eta_X \in [0,1], \label{eq:eta}
\end{align}
where $\ell:[0,1] \to \mathbb{R}_+$ is slowly varying at zero, i.e., $\lim_{s\to 0} \ell(sx)/\ell(s) = 1$, $x>0$. The parameter $\eta_X$ is termed the residual tail dependence coefficient; positive and negative extremal association are indicated respectively by $\eta_X\in(1/2,1]$ and $\eta_X\in[0,1/2)$, whilst asymptotically dependent variables have $\eta_X=1$ and $\chi_X=\lim_{q\to 1}\ell(1-q)$. A value of $\eta_X = 0$ means that the left-hand side of~\eqref{eq:eta} decays faster than any power of $1-q$, whilst if the left-hand side is exactly zero for some $q<1$, we say that $\eta_X$ is not defined.

Our particular interest in the extremal dependence of constructions of the form~\eqref{eq:xconst} stems not from their novelty, but from their ubiquity and flexibility. As mentioned,~\eqref{eq:xconst} already encompasses many well-known families, and moreover these families display different types of extremal dependence, which may be determined by the distribution of $R$, the distribution of $\W$, or its support $\mathcal{W}$. There is a large body of literature that treats either individual constructions of the form~\eqref{eq:xconst}, or a particular subset of these constructions where $R$ or $\W$ have certain specified properties; this literature will be reviewed in Section~\ref{sec:Examples}. Our aim is to bring this scattered treatment together and more systematically characterize how the extremal dependence of $\X$ is determined by the properties of $R$ and $\W$. By understanding which facets of the construction lead to different dependence properties, we are able to determine dependence models that can capture both types of extremal dependence within a single parametric family; the recent proposals in \citet{Wadsworthetal2017}, \citet{Huseretal2017} and \citet{HuserWadsworth2017} are specific examples of this.  

A broad split in representations of type~\eqref{eq:xconst} is the dimension of $\mathcal{W}$, the support of $\W$. The most common case in the literature is that $\mathcal{W}$ is a one-dimensional subset of $\mathbb{R}^2$, such as the unit sphere defined by some norm or other homogeneous function. Examples include the Mahalanobis norm (elliptical distributions), $L_1$ norm (Archimedean and Liouville distributions), or $L_\infty$ norm (multivariate Pareto distributions). On top of the support $\mathcal{W}$, to obtain distributions within a particular family, $R$ or $\W$ may be specified to have a certain distribution. Where $\mathcal{W}$ is two-dimensional, it may sometimes be reduced to the one-dimensional case by redefining $R$, such as in the Gaussian scale mixtures of \citet{Huseretal2017}; other times, such as for the scale mixtures of log-Gaussian variables in \citet{Krupskiietal2017}, or the model presented in~\citet{HuserWadsworth2017}, this cannot be done. Where $\mathcal{W}$ is two-dimensional, the possible constructions stemming from~\eqref{eq:xconst} form an especially large class, since $\W$ can itself have any copula. In this case, we focus on how the multiplication by $R$ changes the extremal dependence of $\W$, summarized by the coefficients $(\chi_W, \eta_W)$, to obtain the extremal dependence of the modified vector $\X$ in terms of its coefficients $(\chi_X, \eta_X)$. The marginal distributions of $\W$ and $R$ will play a crucial role, since, intuitively, the heavier the tail of $R$ the more additional dependence is introduced in the vector $\X$.

As we are focused on the upper tail of $\X$, we henceforth assume $\W\in\mathbb{R}^2_+$; by the invariance of copulas to monotonic marginal transformations, this also covers random location constructions of the form $ \Y= S+\V$, $S\in\mathbb{R}$, $\V\in\mathcal{V}\subseteq\mathbb{R}^2$. For simplicity of presentation, we will often make the restriction that $W_1$ and $W_2$ have the same distribution, with comments on relaxations of this assumption given in Section~\ref{sec:Conclusions}. Furthermore, whilst our focus on the bivariate case permits simpler notation, many of the results are directly applicable to the bivariate margins of multivariate and spatial models, whose extremal dependence is typically analyzed in terms of the coefficients \eqref{eq:chi} and \eqref{eq:eta}. Examples are given in Section~\ref{sec:Examples}, with further comment on higher dimensions in Section~\ref{sec:Conclusions}.

There is no widely recognized standard for ordering univariate tail decay rates from the slowest to the fastest, although a broad characterization is given by the three domains of attraction of the maximum. We say that the random variable $R$ is in the max-domain of attraction (MDA) of a generalized extreme value distribution if there exists a function $b(t)>0$ such that as $t \to r^\star=\sup\{r: \P(R\leq r) <1\}$,
\begin{align*}
  \P(R \geq t + r/b(t))/ \P(R\geq t) \to (1+\xi r)^{-1/\xi}_+, \qquad r\geq 0,
\end{align*}
for some $\xi \in\mathbb{R}$, where $a_+=\max(a,0)$. The cases $\xi>0, \xi=0, \xi<0$ define respectively the Fr\'{e}chet, Gumbel and negative Weibull domains of attraction; the tail heaviness of $R$ increases with $\xi$. However, the Gumbel limit in particular attracts distributions with highly diverse tail behavior such as finite upper bounds or heavy tails. Overall, this classification is therefore too coarse for our requirements, and it excludes important classes such as superheavy-tailed distributions defined through the property of heavy-tailed log-transformed random variables. In addition to the maximum domains of attraction, we will utilize various commonly used tail classes, which are defined in Section~\ref{sec:Notation}.

We begin in Section~\ref{sec:Constrained} by presenting results concerning the tail dependence of construction~\eqref{eq:xconst} according to the tail behavior of $R$ and the shape of $\mathcal{W}$, in the case where it is a one-dimensional support defined through a norm. We then characterize various cases where $\mathcal{W}$ is two-dimensional, according to the behavior of both $R$ and $\W$, in Section~\ref{sec:Unconstrained}. Section~\ref{sec:Examples} is devoted to literature review and framing a large number of existing examples in terms of our general results, whilst Section~\ref{sec:NewExamples} illustrates the properties of some new examples inspired by the developments in the manuscript. In Section~\ref{sec:Conclusions} we comment on generalizations and conclude. Proofs are presented in Section~\ref{sec:Proofs}.

\subsection{Terminology and notation}
\label{sec:Notation}
For a random variable $Q$, we define its survival function $\bar{F}_{Q}(q)=\P(Q\geq q)$, and distribution function $F_Q(q) = 1-\bar{F}_Q(q)$. If $Q$ represents a bivariate random vector $Q=(Q_1,Q_2)$, we denote the minimum of its margins by $Q_\wedge = Q_1 \wedge Q_2$. For two functions $f$ and $g$ with $g(x)\not=0$ for values $x$ above some threshold value $x_0$, we write $f \sim g$ if $f(x)/g(x)\rightarrow 1$, where the limit is considered for $x\rightarrow\infty$ if not stated otherwise. Similarly, we write $f(x)=o(g(x))$ to indicate that $f(x)/g(x)\rightarrow 0$. The convolution of $X\sim F_X$ and $Y\sim F_Y$ is denoted $F_X \star F_Y=F_{X+Y}$. We recall definitions of upper tail behavior classes for a random variable $X$ with distribution $F$. Key tail parameters for these classes may be given as subscript, such as in $\mathrm{ET}_\alpha$  to refer to exponential-tailed distributions with rate $\alpha$, but we may omit the subscript if the specific value of the parameter is not of interest. 

\begin{definition}[Light-, heavy- and superheavy-tailed distributions] 
	The distribution $F$ is \emph{heavy-tailed} if $\exp(\lambda x) \overline{F}(x)\rightarrow \infty$ as $x\rightarrow \infty$, for any $\lambda>0$. Further, $F$ is \emph{superheavy-tailed} if $F(\exp(\cdot))$  is heavy-tailed. If $F$ is not heavy-tailed, it is \emph{light-tailed}. 
\end{definition}
\begin{definition}[Regularly varying functions and distributions ($\RV_\alpha^0$ and $\RV_\alpha^\infty$)]
 A measurable function $g$ is \emph{regularly varying at infinity or at zero with index $\alpha \in \mathbb{R}$} if $g(tx)/g(t) \to x^{\alpha}$ as $t\rightarrow\infty$ or $t\to 0$ respectively for any $x>0$. We write $g\in \RV_\alpha^\infty$ or $g\in \RV_\alpha^0$ respectively. If $\alpha=0$, then $g$ is said to be \emph{slowly varying}. A probability distribution $F$ with upper endpoint $x^\star=\infty$ is \emph{regularly varying with index $\alpha\geq 0$} if  $\overline{F}\in\RV_{-\alpha}^\infty$. If $x^\star<\infty$, then $F$ is \emph{regularly varying at $x^\star$ with index $\alpha$} if $\overline{F}(x^\star-\cdot)\in\RV_\alpha^0$.
\end{definition}
\begin{definition}[Exponential-tailed distributions ($\mathrm{ET}_\alpha$, $\mathrm{ET}_{\alpha,\beta}$)]
The distribution $F$ with upper endpoint $x^\star=\infty$ is \emph{exponential-tailed with rate $\alpha \geq 0$} if for any $x>0$, $\overline{F}(t+x)/\overline{F}(t)\rightarrow \exp(-\alpha x)$,
 $t\rightarrow\infty$. If $\alpha>0$ and
$\bar{F}(x)=r(x)\exp(-\alpha x)$, $r \in\RV_\beta^\infty$, we write $F\in\mathrm{ET}_{\alpha,\beta}$. 
\end{definition}
\noindent By definition, $F \in \mathrm{ET}_\alpha$ with $\alpha\geq 0$ if and only if $\bar{F}(\log(\cdot)) \in \RV_{-\alpha}^\infty$. The class $\mathrm{ET}_{\alpha,\beta}$ with $\beta>-1$ is referred to as \emph{gamma-tailed distributions}. Another important subclass of $\mathrm{ET}_\alpha$ are the convolution-equivalent distributions.
\begin{definition}[Convolution-equivalent distributions ($\mathrm{CE}_\alpha$)]
	\label{def:conveq}
The distribution $F$ is \emph{convolution equivalent with index $\alpha \geq 0$} if $F\in \mathrm{ET}_\alpha$ and $\overline{F\star F}(x)/\overline{F}(x)\rightarrow 2 \int_{-\infty}^{\infty} \exp(\alpha x) F(\mathrm{d}x)<\infty$.  
We write $F\in\mathrm{CE}_\alpha$. We refer to the class $\mathrm{CE}_0$ as \emph{subexponential distributions}.
\end{definition}
\begin{definition}[Weibull- and log-Weibull tailed distributions ($\mathrm{WT}_\beta$, $\mathrm{LWT}_\beta$)]\label{def:weib}
	The distribution $F$ is  \emph{Weibull-tailed with index $\beta>0$} if there exist $\alpha>0$, $\gamma\in\mathbb R$, and $r\in\RV_\gamma^\infty$  such that $\overline{F}(x)\sim r(x)\exp(-\alpha x^\beta)$. $F$ is \emph{log-Weibull-tailed} with index $\beta>0$ if $F(\exp(\cdot))\in\mathrm{WT}_\beta$. 
\end{definition}

We remark that some authors define heavy tails to be synonymous with regularly varying tails for which the tail index $\alpha>0$ \citep[e.g.][]{Resnick07}. The definition that we use is broader, and includes distributions such as the log-Gaussian, as well as regularly varying tails.
In practice, all of the heavy tailed distributions that we treat belong to the class of subexponential distributions, $\mathrm{CE}_0$.

\section{Constrained angular variables}
\label{sec:Constrained}
We focus firstly on the case where $\mathcal{W}$ is defined by a norm $\nu$; specifically let $\mathcal{W} = \{(w_1,w_2) \in \mathbb{R}^2_+: \nu(w_1,w_2) = 1\}$. Other types of constrained spaces may sometimes be of interest, but norm spheres are a common restriction, and this focus allows greater generality in other aspects. In particular, all components of the vector are bounded in absolute value when the value of the norm is fixed.  We examine the extremal dependence based on the heaviness of the tail of $R$. Because the $\W$ are bounded, and subject to additional mild assumptions, we can classify $R$ according to its MDA in this section.

The case where $R$ belongs to the Fr\'echet MDA is the least delicate: as long as $R$ has a much heavier tail than each of $\W$, results do not depend strongly on other considerations. No equality in distribution is assumed between $W_1,W_2$ in this case. When $R$ is in the Gumbel or negative Weibull MDA, the shape of the norm $\nu$ becomes important, and some minor additional regularity conditions are assumed, detailed in Section~\ref{sec:Gumbel}. 

\subsection{Radial variable in Fr\'{e}chet MDA}
\label{sec:Frechet}
Many of the most familiar results in the literature on extremal dependence concern the case when $R$ is in the Fr\'{e}chet MDA; this is equivalent to regular variation of the tail of $R$, namely $\bar{F}_R \in\RV_{-1/\xi}^\infty$, $\xi>0$, where $\alpha = 1/\xi$ is called the tail index. A classical example of this is the (multivariate) Pareto copula, which can be constructed as in equation~\eqref{eq:xconst} with $R$ standard Pareto, and $\mathcal{W} = \{(w_1,w_2) \in \mathbb{R}^2_+: \max(w_1,w_2) = 1\}$ \citep{FerreiradeHaan2014}. Pareto copulas can be identified with so-called extreme value copulas, which arise as the limiting copulas of suitably normalized componentwise maxima; see e.g.\ \citet{Rootzenetal2017}. The next result provides the general form of the tail dependence coefficient for these models.

\begin{prop}[$R$ in Fr\'echet MDA]
\label{prop:Frechet_chi}
Let $\bar{F}_R \in\RV_{-\alpha}^{\infty}$, $\alpha \geq 0$, $\P(W_1>0)=\P(W_2>0)=1$, and $\E(W_j^{\alpha+\varepsilon})<\infty$, $j=1,2$, for some $\varepsilon>0$. Then $\eta_X = 1$, and
\begin{align}
 \chi_X = \E\left[\min\left\{W_1^\alpha / \E(W_1^\alpha),W_2^\alpha / \E(W_2^\alpha)\right\}\right]. \label{eq:Rrvchi}
\end{align}
\end{prop}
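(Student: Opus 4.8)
The plan is to compute the tail of the joint survival probability $\P(X_1 \geq t, X_2 \geq t)$ directly and compare it to the marginal tail $\P(X_1 \geq t)$, since by the standard probability-integral transform argument, $\chi_X = \lim_{t\to\infty} \P(X_1 \geq t, X_2 \geq t)/\P(X_1 \geq t)$ whenever $X_1,X_2$ are identically distributed (and more generally one normalizes each margin). First I would condition on $\W$ and write, using independence of $R$ and $\W$,
\begin{align*}
 \P(X_1 \geq t, X_2 \geq t) = \E\left[\bar{F}_R\!\left(\frac{t}{W_1 \wedge W_2}\right)\right], \qquad \P(X_1 \geq t) = \E\left[\bar{F}_R\!\left(\frac{t}{W_1}\right)\right],
\end{align*}
using that $R(W_1 \wedge W_2) \geq t$ iff $R \geq t/(W_1 \wedge W_2)$ on the event $W_1 \wedge W_2 > 0$, which holds almost surely by the assumption $\P(W_j>0)=1$.

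The key step is then to evaluate the asymptotics of expectations of the form $\E[\bar{F}_R(t/W)]$ as $t\to\infty$ for a nonnegative random variable $W$ with $\E(W^{\alpha+\varepsilon})<\infty$. Since $\bar F_R \in \RV_{-\alpha}^\infty$, the uniform convergence theorem (Potter bounds) gives $\bar{F}_R(t/w)/\bar{F}_R(t) \to w^{\alpha}$ pointwise in $w$, and Potter's bounds supply a dominating function proportional to $w^{\alpha+\varepsilon} + w^{\alpha-\varepsilon}$ that is integrable against the law of $W$ under the moment hypothesis. Dominated convergence then yields
\begin{align}
 \E[\bar{F}_R(t/W)]/\bar{F}_R(t) \to \E(W^\alpha), \qquad t\to\infty. \label{eq:breiman}
\end{align}
This is essentially Breiman's lemma, and applying it to $W = W_1\wedge W_2$ in the numerator and to $W = W_1$ in the denominator gives
\begin{align*}
 \frac{\P(X_1 \geq t, X_2 \geq t)}{\P(X_1 \geq t)} \to \frac{\E\{(W_1\wedge W_2)^\alpha\}}{\E(W_1^\alpha)} = \E\left[\min\left\{\frac{W_1^\alpha}{\E(W_1^\alpha)}, \frac{W_2^\alpha}{\E(W_2^\alpha)}\right\}\right],
\end{align*}
where the last equality uses $\E(W_1^\alpha)=\E(W_2^\alpha)$ under the equal-margins assumption together with $(W_1\wedge W_2)^\alpha = \min(W_1^\alpha, W_2^\alpha)$ since $x\mapsto x^\alpha$ is increasing on $\mathbb{R}_+$. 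This establishes the formula~\eqref{eq:Rrvchi} and in particular shows the limit is positive, so $\chi_X>0$ and hence $\eta_X=1$.

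The main obstacle I anticipate is justifying the interchange of limit and expectation in~\eqref{eq:breiman} with full rigor: one must verify that the Potter-bound domination is valid uniformly for all large $t$ and all $w$ in the support, and handle the behavior near $w=0$ and $w=\infty$ separately, since the two-sided Potter bound has different exponents in the two regimes. The moment condition $\E(W_j^{\alpha+\varepsilon})<\infty$ is precisely what controls the upper tail $w\to\infty$, while the assumption $\P(W_j>0)=1$ (ensuring $W_1\wedge W_2>0$ a.s.) prevents the divergent contribution near $w=0$; I would take care to confirm that no mass accumulates pathologically at the origin. A secondary point worth stating carefully is the reduction $\chi_X = \lim \P(X_1\geq t, X_2\geq t)/\P(X_1\geq t)$: since $R\in\RV$ implies $X_j$ has a regularly varying (hence continuous-in-the-tail) distribution with $F_{X_j}^{-1}(q)\to\infty$ as $q\to1$, substituting $t=F_{X_1}^{-1}(q)$ and using $1-q = \P(X_1\geq t)$ recovers the definition~\eqref{eq:chi} directly.
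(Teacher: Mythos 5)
Your overall engine --- condition on $\W$ and apply Breiman's lemma to the numerator and the denominator --- is the same analytic tool the paper uses (its Lemma~\ref{lem:breiman} plus a dominated convergence step), but there is a genuine gap in the reduction at the start. You replace the definition~\eqref{eq:chi}, which thresholds each component at its \emph{own} marginal quantile, by the common-threshold ratio $\P(X_1\geq t, X_2\geq t)/\P(X_1\geq t)$, and this is only valid when $X_1$ and $X_2$ are tail-equivalent. Proposition~\ref{prop:Frechet_chi} does not assume $W_1\stackrel{d}{=}W_2$ --- the paper states explicitly that no equality in distribution is assumed in the Fr\'echet case, and Remark~2 even stresses that $\W\in\mathbb{R}_+^2$ is covered --- and since Breiman gives $\bar{F}_{X_j}(x)\sim \E(W_j^\alpha)\bar{F}_R(x)$, the margins of $\X$ fail to be tail-equivalent whenever $\E(W_1^\alpha)\neq\E(W_2^\alpha)$ with $\alpha>0$. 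Your computation then yields $\E\{(W_1\wedge W_2)^\alpha\}/\E(W_1^\alpha)$, which is not~\eqref{eq:Rrvchi}: take $\alpha=1$, $W_1\equiv 1$, and $W_2\in\{1,3\}$ with probability $1/2$ each, so that your ratio equals $1$ while~\eqref{eq:Rrvchi} gives $3/4$. Your parenthetical ``one normalizes each margin'' is precisely the missing step, and it is not cosmetic: the paper inverts the Breiman marginal relation \citep[Proposition~2.6(vi) of][]{Resnick07} to obtain $F_{X_j}^{-1}(q)\sim \E(W_j^\alpha)^{1/\alpha}F_R^{-1}(q)$, and then applies dominated convergence and Breiman to the perturbed minimum $R\min\left\{W_1\E(W_1^\alpha)^{-1/\alpha}[1+o(1)],\,W_2\E(W_2^\alpha)^{-1/\alpha}[1+o(1)]\right\}$; controlling the $[1+o(1)]$ factors inside the minimum is where the moment hypothesis is invoked a second time. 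Under the extra assumption of equal margins your argument is sound and essentially a streamlined version of the paper's, but as written it does not prove the proposition in its stated generality.

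Two smaller points. First, the statement includes $\alpha=0$, where the quantile relation $F_{X_j}^{-1}(q)\sim\E(W_j^\alpha)^{1/\alpha}F_R^{-1}(q)$ is meaningless; the paper treats this case separately via the sandwich bounds~\eqref{eq:chiineq}, noting $\bar{F}_{X_j}\sim\bar{F}_{X_\wedge}\sim\bar{F}_R\in\RV_0^\infty$ and concluding $\chi_X=1$, consistent with~\eqref{eq:Rrvchi}. (Your common-threshold reduction is in fact legitimate at $\alpha=0$, since there the margins are automatically tail-equivalent, so your route would repair this case with one remark.) Second, regarding your anticipated obstacle: no Potter bound is needed near $w=0$ at all, since for $w\leq 1$ monotonicity gives $\bar{F}_R(t/w)\leq\bar{F}_R(t)$; the role of $\P(W_1>0)=\P(W_2>0)=1$ is not integrability but ensuring $\P(W_1\wedge W_2>0)=1$, hence strict positivity of the limit in~\eqref{eq:Rrvchi} and therefore $\eta_X=1$ --- if only $\P(W_j>0)>0$ were assumed, the right-hand side of~\eqref{eq:Rrvchi} could vanish and $\eta_X$ be undefined, as the paper discusses immediately after the proposition.
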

\begin{remark}
 When $\bar{F}_R(r)\sim Cr^{-\alpha}$ for some $C>0$, then the condition $\E(W_j^{\alpha+\varepsilon})<\infty$ can be replaced by $\E(W_j^{\alpha})<\infty$, by Lemma~2.3 of \citet{DavisMikosch2008}.
\end{remark}
\begin{remark}
 The condition $\E(W_j^{\alpha+\varepsilon})<\infty$ is guaranteed when $\mathcal{W}$ is the unit sphere of a norm $\nu$; Proposition~\ref{prop:Frechet_chi} notably also covers the case where $\W \in\mathbb{R}_+^2$.
\end{remark}
\begin{remark}
\label{rmk:SVR}
The result includes the case $\alpha=0$, although the tail of such an $R$ is too heavy to be in any domain of attraction. In this case, $\chi_X=1$, representing perfect upper tail dependence. This case is discussed further in Section~\ref{sec:RSHT}.
\end{remark}
From~\eqref{eq:Rrvchi}, we observe that asymptotic dependence arises since $\P\{\min(W_1^\alpha/\E(W_1^\alpha),W_2^\alpha/\E(W_2^\alpha))>0\}=1$. If the conditions of the Proposition were relaxed to $\P(W_1>0),\P(W_2>0)>0$, then it is possible that for $\alpha>0$, $\P\{\min(W_1^\alpha/\E(W_1^\alpha),W_2^\alpha/\E(W_2^\alpha))>0\}=0$ which would yield asymptotic independence, and then $\eta_X$ would not be defined. The Fr\'{e}chet case with one-dimensional $\mathcal{W}$ is therefore very restricted in its capacity to represent varied asymptotically independent behaviors. A more complete description of  tail dependence is given by the exponent function, defined as
\begin{align}
 V_X(x_1,x_2) = \lim_{t\to \infty} t[1- \P(X_1\le F_{X_1}^{-1}\{1-1/(tx_1)\}, X_2 \le F_{X_2}^{-1}\{1-1/(tx_2)\})], \qquad x_1,x_2>0. \label{eq:expfn}
\end{align}
Small modifications to Proposition~\ref{prop:Frechet_chi} yield
\begin{align}
 V_X(x_1,x_2) = \E\left[\max\left\{W_1^\alpha /(\E(W_1^\alpha)x_1),W_2^\alpha/(\E(W_2^\alpha)x_2)\right\}\right].\label{eq:V}
\end{align}
The link between $\chi_X$ and $V_X(1,1)$ can be obtained simply by inclusion-exclusion arguments; in particular since $\min(a,b) = a+b-\max(a,b)$, $\chi_X=2-V_X(1,1)$. 
With the assumptions of Proposition~\ref{prop:Frechet_chi}, the random vector $(X_1,X_2)$ satisfies the condition of multivariate regular variation in the sense that $\lim_{t\rightarrow\infty} (1-F(tx_1,tx_2))/(1-F(t,t))$ has finite positive limit for any $x_1,x_2>0$; see \citet[][Section 5.4.2]{Resnick1987} for details about the notion of multivariate regular variation. To abstract away from the marginal distributions in $F$,  we can replace $tx_j$ by the quantile function $F_{X_j}^{-1}(1-1/(tx_j))$, $j=1,2$, in this limit. If the latter exists, it is given by $V(x_1,x_2)/V(1,1)$, and existence of the limit is equivalent to  Equation~\eqref{eq:expfn}. While many of the specific examples of random scale constructions presented in this paper satisfy Equation~\eqref{eq:expfn}, our general results focus on the behavior along the diagonal where $x_1=x_2$, and we do not aim to provide specific statements about off-diagonal behavior with $x_1\not=x_2$.

\begin{example}
Let $\bar{F}_R \in \RV_{-1}^\infty$, i.e., $\alpha=1$, and $\W \in \mathcal{W} = \{(w_1,w_2) \in [0,1]^2: w_1+w_2 =1\}$. Taking $\W=(W,1-W)$ then $W \in[0,1]$, with $\E(W) = 1/2$, is the random variable described by the $L_1$ \emph{spectral measure} \citep[e.g.][]{ColesTawn1991}. A simple example is the Gumbel or logistic spectral measure, which has Lebesgue density
\begin{align}
h(w) = \{w(1-w)\}^{1/\theta-2}\{w^{1/\theta} + (1-w)^{1/\theta}\}^{\theta-2}(1-\theta)/(2\theta) \qquad \theta\in(0,1), \label{eq:hlogistic}
\end{align}
$\chi_X = 2-2^{\theta}$ and $V_X(x_1,x_2) = (x_1^{-1/\theta}+x_2^{-1/\theta})^{\theta}$.
\end{example}

\subsection{Radial variable in Gumbel MDA}
\label{sec:Gumbel}
Suppose that $R$ is in the Gumbel MDA, with upper endpoint $r^\star\in (0,\infty]$ i.e.,
\[
 \lim_{t\to r^\star} \bar{F}_R(t+r/b(t))/\bar{F}_R(t) = e^{-r},
\]
where $b(t)$ is termed the auxiliary function. Such distributions can be expressed as
\begin{align}
 \bar{F}_R(r) = c(r)\exp\left\{-\int_z^{r} b(t) \d t\right\}, \label{eq:GumbelMDA}
\end{align}
where $z<r<r^\star$, $c(r) \to c>0$ as $r\to r^\star$, and $a=1/b$ is absolutely continuous with density $a'$ satisfying $\lim_{t\to r^\star}a'(t)=0$ \citep[e.g.][Chapter 3.3]{Embrechtsetal1997}. Several distributions in this domain have mass on $\mathbb{R}_-$, but we suppose here that $R$ is conditioned to be positive, which does not affect the tail behavior. If $r^\star=\infty$, we also have  \citep{Hashorva2012} that for any $\lambda>1, \rho\in\mathbb{R}$,
\begin{align}
 \lim_{r\to\infty} (rb(r))^\rho \bar{F}_R(\lambda r) / \bar{F}_R(r) = 0. \label{eq:DR}
\end{align}

\subsubsection*{Notation and assumptions for $\W$}
Suppose that $W_1 \stackrel{d}{=} W_2  \stackrel{d}{=} W \in[0,1]$ and $\nu\W = 1$. To this end, we assume that $\nu$ is a symmetric norm, i.e., $\nu(x,y)=\nu(y,x)$, and scaled to satisfy $\nu(x,y)\geq \max(x,y)$, such that the unit sphere of $\nu$ is contained in that of $\max$, with $\nu(b,1-b) = b$ for some $b\geq 1/2$. Let $\tau(z)=z/\nu(z,1-z) = 1/\nu(1,1/z-1)$. The random variable $Z = W_1 / (W_1+W_2) \in [0,1]$ has distribution symmetric about $1/2$, and satisfies
\begin{align}
 \W = (Z,1-Z) / \nu(Z,1-Z) = (\tau(Z),\tau(1-Z)). \label{eq:Wnu}
\end{align}
Define $I_{\nu} = [b_1,b_2] \subseteq [1/2,1]$ as the interval such that $\tau(z) = 1$ for all $z\in I_{\nu}$, and $\tau(z)<1$ for $z\not\in I_{\nu}$, and write $\tau(z)=\tau_1(z)$ for $z \in[0,b_1]$, $\tau(z)=1$ for $z \in[b_1,b_2]$, and $\tau(z)=\tau_2(z)$ for $z \in[b_2,1]$, with $\tau_1$ strictly increasing and $\tau_2$ strictly decreasing.
Figure~\ref{fig:nutaukappa} illustrates $\tau$ for a particular $\nu$; further illustrations are given in Appendix~\ref{app:addill}. 
We assume further that 
\begin{figure}
\centering
 \includegraphics[width=0.45\textwidth]{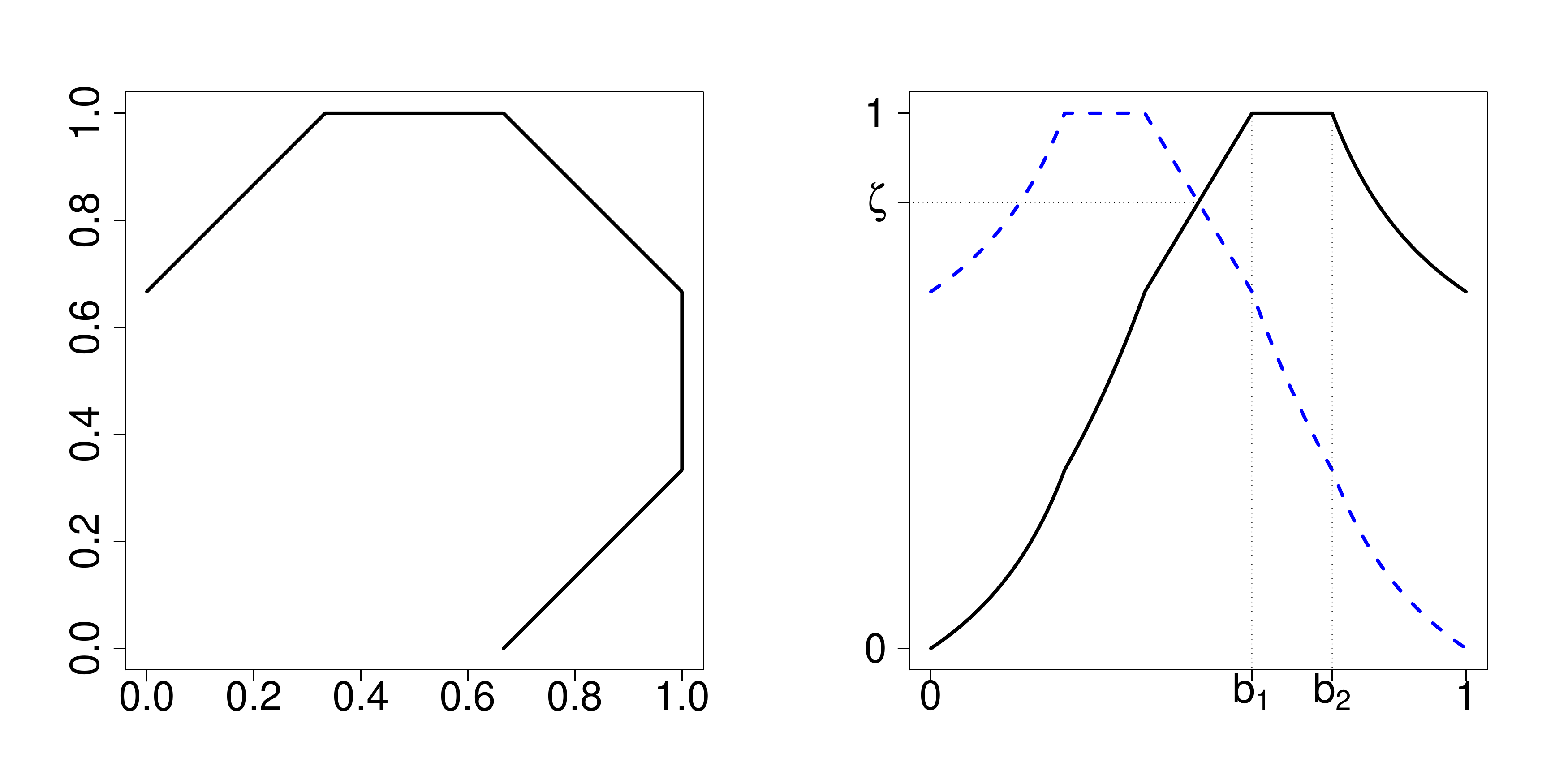}
  \includegraphics[width=0.45\textwidth]{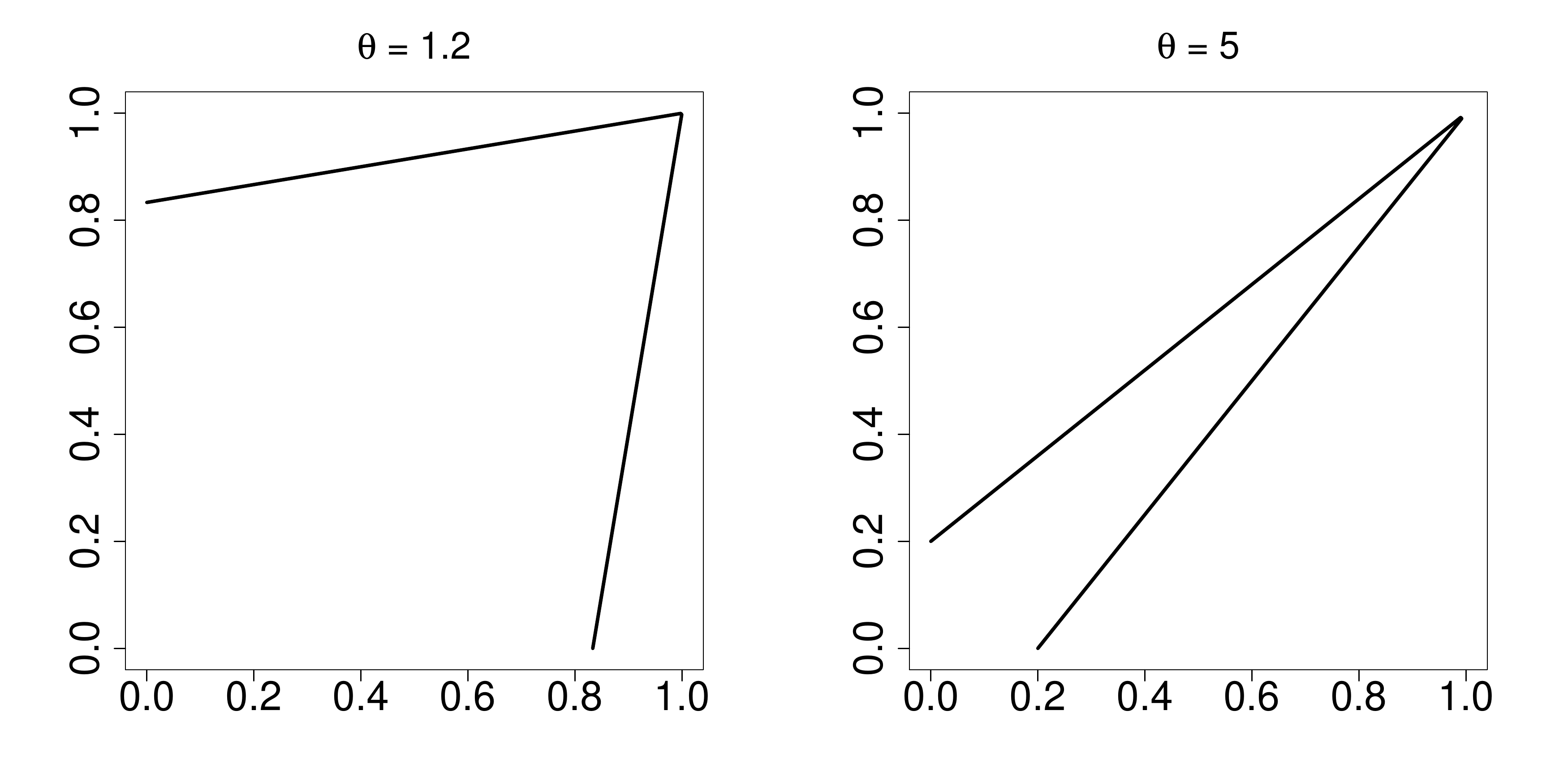}
 \caption{Left: the unit sphere for a particular norm $\nu$; centre-left: illustration of $\tau(z)$ (solid line) and $\tau(1-z)$ (dashed line) for the same $\nu$. Centre-right and right: illustration of the unit sphere of $\nu(x,y) = \theta \max(x,y) + (1-\theta)\min(x,y)$ for two different values of $\theta$.}
 \label{fig:nutaukappa}
\end{figure}

\begin{enumerate}
 \item[(Z1):] $Z$ has a Lebesgue density, $f_Z$, positive everywhere on $(0,1)$, and that its survival function is regularly varying at $1$, with $\bar{F}_Z(1-\cdot) \in \RV_{\alpha_Z}^0$, $\alpha_{Z}>0$, \label{ass:z1}
\end{enumerate}
and make the following mild regularity assumptions on the norm, $\nu$, or equivalently $\tau$:
\begin{enumerate}
 \item[(N1):]The function $\tau$ is twice (piecewise) continuously differentiable except for finitely many points, at which we only require existence of left and right derivatives of first and second order. \label{ass:n1}
 \item[(N2):] $\tau$ is regularly varying as it approaches 1 from either side, i.e., $1-\tau_1(b_1-\cdot) \in \RV_{1/\gamma_1}^0$, and, if $b_2<1$, $1-\tau_2(b_2+\cdot) \in \RV_{1/\gamma_2}^0$, $\gamma_j \in (0,1]$, $j=1,2$. We label $\gamma=\min(\gamma_1,\gamma_2)$ with $\gamma=\gamma_1$ if $b_2=1$.  
 \end{enumerate}
In practice, (N1) and (N2) are satisfied by a wide variety of commonly used norms, and the upper limit of $\gamma_1,\gamma_2\leq 1$ in (N2) is a consequence of convexity of the norm $\nu$; see Lemma~\ref{lem:gamleq1} in Appendix~\ref{app:AProofs}. Note that $\tau(z)\lessgtr\tau(1-z) \Leftrightarrow z \lessgtr 1-z$, so that 
\begin{align*}
\min(\tau(z),\tau(1-z)) = \begin{cases}
                           \tau(z), \qquad & z\in[0,1/2],\\
                           \tau(1-z), \qquad & z\in[1/2,1].
                          \end{cases}  
\end{align*}
 Finally, denote $\zeta= \tau(1/2)\in [1/2,1]$, so that $W_\wedge = \min(\tau(Z),\tau(1-Z)) \in [0,\zeta]$.

\begin{prop}[$R$ in Gumbel MDA] \label{prop:RGumbel}
 Assume $\bar{F}_R$ satisfies~\eqref{eq:GumbelMDA} and that (N1), (N2) and (Z1) hold. Then:
 \begin{enumerate}
  \item If $\zeta<1$, $\chi_X = 0$ and $\eta_X = \lim_{x\to r^\star} \log \bar{F}_R(x)/\log \bar{F}_R(x/\zeta)$, which is defined only for $r^\star = \infty$. \label{RGumbelitem1}
  \item If $\zeta=1$, then $\eta_X = 1$. Further, $b_1=1/2$ and
  \begin{align*}
   \chi_X = \begin{cases}
             0 & \mbox{if } b_2>1/2, i.e., \P(W=1)>0,\\
             \displaystyle\frac{2\tau_2'(1/2_+)}{\tau_2'(1/2_+)-\tau_1'(1/2_-)}  & \mbox{otherwise}.
            \end{cases}
  \end{align*}
 \end{enumerate}
\end{prop}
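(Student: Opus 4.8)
The plan is to reduce both coefficients to the tails of products of $R$ with a single bounded angular variable, and then to read those off from the Gumbel-MDA structure of $\bar{F}_R$. Since $W_1\stackrel{d}{=}W_2\stackrel{d}{=}W$, the margins satisfy $X_1\stackrel{d}{=}X_2\stackrel{d}{=}RW$, so that for $u=F_X^{-1}(q)$ we have $1-q=\bar{F}_X(u)=\P(RW_1\geq u)$. Because $R>0$, the joint exceedance event equals $\{X_1\geq u,X_2\geq u\}=\{R W_\wedge\geq u\}$ with $W_\wedge=\min(\tau(Z),\tau(1-Z))$. Hence
\begin{align*}
 \chi_X=\lim_{u\to r^\star}\frac{\P(R W_\wedge\geq u)}{\P(R W_1\geq u)},\qquad \frac1{\eta_X}=\lim_{u\to r^\star}\frac{\log\P(R W_\wedge\geq u)}{\log\P(R W_1\geq u)},
\end{align*}
the second identity coming from~\eqref{eq:eta} on taking logarithms and discarding the slowly varying factor. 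Everything is thus governed by the tail of $RV$ for bounded $V\geq0$ with essential supremum $v^\star$: here $v^\star=\zeta$ for $V=W_\wedge$ and $v^\star=1$ for $V=W_1$.

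The technical core is a two-level description of $\P(RV\geq u)=\E[\bar{F}_R(u/V)]$. At the logarithmic level I would sandwich it between $\bar{F}_R(u/v^\star)$ (from $V\leq v^\star$) and $\P(V\geq v^\star-\delta)\,\bar{F}_R(u/(v^\star-\delta))$ (from independence), let $u\to r^\star$ and then $\delta\to0$; the collapse of $\log\bar{F}_R(u/(v^\star-\delta))/\log\bar{F}_R(u/v^\star)\to1$ is supplied by the representation~\eqref{eq:GumbelMDA} with the self-neglecting property of $a=1/b$ (equivalently~\eqref{eq:DR}), yielding $\log\P(RV\geq u)\sim\log\bar{F}_R(u/v^\star)$. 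At the exact level, when $v^\star=1$ and $\bar{F}_V(1-\cdot)\in\RV_\kappa^0$, I would write $u/(1-s)=u+us+o(us)$ and use the defining convergence $\bar{F}_R(u+ya(u))/\bar{F}_R(u)\to e^{-y}$ to get $\P(RV\geq u)/\bar{F}_R(u)\sim\int_0^\infty e^{-us/a(u)}\,\d F_S(s)$ with $S=1-V$; a Karamata Tauberian argument with $\lambda=u/a(u)\to\infty$ then gives
\begin{align*}
 \P(RV\geq u)\sim c_V\,\Gamma(\kappa+1)\,(a(u)/u)^\kappa\,\bar{F}_R(u),\qquad \bar{F}_V(1-s)\sim c_V s^\kappa.
\end{align*}
I expect the main obstacle to be precisely here: upgrading the pointwise Gumbel convergence to the uniform/dominated control needed both to extend the integral to $(0,\infty)$ and to run the Tauberian step, for which Potter-type bounds valid in the Gumbel MDA are needed.

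With these tools the two cases are short. If $\zeta<1$, then $v^\star=\zeta$ for $W_\wedge$ and $v^\star=1$ for $W_1$, so the logarithmic level gives $1/\eta_X=\lim\log\bar{F}_R(u/\zeta)/\log\bar{F}_R(u)$, i.e.\ the stated $\eta_X=\lim_{x\to r^\star}\log\bar{F}_R(x)/\log\bar{F}_R(x/\zeta)$. If $r^\star<\infty$ then $R W_\wedge\leq\zeta r^\star<r^\star$ forces $\P(R W_\wedge\geq u)=0$ for $u>\zeta r^\star$, so the left-hand side of~\eqref{eq:eta} is eventually zero and $\eta_X$ is undefined; in all cases $\bar{F}_R(u/\zeta)/\bar{F}_R(u)\to0$ by rapid variation gives $\chi_X=0$.

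When $\zeta=\tau(1/2)=1$ we have $1/2\in I_\nu$, hence $b_1=1/2$; both $W_\wedge$ and $W_1$ then have essential supremum $1$, and the logarithmic level immediately yields $\eta_X=1$. For $\chi_X$ there are two regimes. If $b_2>1/2$, then $\P(W_1=1)=\P(Z\in I_\nu)>0$, so this atom dominates and $\P(R W_1\geq u)\sim\P(W_1=1)\bar{F}_R(u)$, whereas $W_\wedge=1$ only at the single point $Z=1/2$, so $W_\wedge<1$ a.s.\ and $\P(R W_\wedge\geq u)=o(\bar{F}_R(u))$, giving $\chi_X=0$. If instead $b_2=1/2$, neither variable has an atom at $1$; linearizing $\tau$ about $1/2$ via (N1)--(N2) with finite nonzero one-sided derivatives ($\kappa=1$), I would compute $\bar{F}_{W_1}(1-t)\sim f_Z(1/2)\,t\{1/\tau_1'(1/2_-)-1/\tau_2'(1/2_+)\}$ and $\bar{F}_{W_\wedge}(1-t)\sim 2f_Z(1/2)\,t/\max(\tau_1'(1/2_-),-\tau_2'(1/2_+))$. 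Since $\kappa=1$ for both, the exact-level factors $\Gamma(2)(a(u)/u)\bar{F}_R(u)$ cancel and $\chi_X$ equals the ratio of these constants. Finally, symmetry of $\nu$ makes $z\mapsto\nu(z,1-z)$ symmetric about $1/2$, whence its one-sided derivatives there are negatives of one another; together with $\nu(1/2,1/2)=1/2$ this forces $\tau_1'(1/2_-)\geq-\tau_2'(1/2_+)$, so the maximum equals $\tau_1'(1/2_-)$. Substituting collapses the ratio to $2\tau_2'(1/2_+)/\{\tau_2'(1/2_+)-\tau_1'(1/2_-)\}$, as claimed.
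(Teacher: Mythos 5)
Your overall architecture matches the paper's: reduce everything to the tails of $RW_1$ and $RW_\wedge$, obtain regular variation of index $1$ (resp.\ $\alpha_W$) of the angular tails at their endpoints by linearizing $\tau$ around $1/2$ (this reproduces Lemma~\ref{lem:taylor} — and your constant $2f_Z(1/2)\,t/\max(\tau_1'(1/2_-),-\tau_2'(1/2_+))$ for $\bar{F}_{W_\wedge}$ agrees with the paper's $2f_Z(1/2)\zeta/\tau_1'(1/2_-)$, since symmetry of $\nu$ indeed forces $\tau_1'(1/2_-)+\tau_2'(1/2_+)=4\zeta>0$), and then use exact second-order asymptotics for $\P(RV\geq u)$ in the Gumbel MDA; your Tauberian sketch is exactly the content of the paper's Lemma~\ref{lem:HashorvaThm3}, which is imported from \citet{Hashorvaetal2010} rather than re-proved (the uniformity issue you flag is thereby sidestepped). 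All your $\chi_X$ values and the atom/no-atom case analysis for $\zeta=1$ come out correctly.

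However, your argument for $\eta_X$ has a genuine gap. You claim that the iterated-limit sandwich — bound $\P(RV\geq u)$ between $\P(V\geq v^\star-\delta)\bar{F}_R(u/(v^\star-\delta))$ and $\bar{F}_R(u/v^\star)$, send $u\to r^\star$, then $\delta\to 0$ — collapses because $\log\bar{F}_R(u/(v^\star-\delta))/\log\bar{F}_R(u/v^\star)\to 1$, "supplied by~\eqref{eq:GumbelMDA} with self-neglect (equivalently~\eqref{eq:DR})". This is false in the generality of the proposition: \eqref{eq:DR} and self-neglect control ratios of \emph{tails}, not ratios of \emph{log-tails}. Take $\bar{F}_R(x)=\exp(-e^x)$, explicitly within scope (the ``log of exponential'' row of Table~\ref{tab:simu-clayton}): then for every fixed $\delta>0$, $\log\bar{F}_R(u/(v^\star-\delta))/\log\bar{F}_R(u/v^\star)=\exp\{u(1/(v^\star-\delta)-1/v^\star)\}\to\infty$, so the upper half of your sandwich is vacuous and the two iterated limits need not pin down the diagonal limit defining $\eta_X$ — this already undermines your derivation of $\eta_X=1$ in the case $\zeta=1$ (e.g.\ $L_\infty$ norm with this $R$). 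The paper instead applies the exact asymptotics to \emph{both} $\bar{F}_X$ and $\bar{F}_{X_\wedge}$, obtaining $\log\bar{F}_{X_\wedge}(x)=\log\bar{F}_R(x/\zeta)-\log\{(x/\zeta)b(x/\zeta)\}+O(1)$ as in~\eqref{eq:Gumbel_eta}, and then supplies the step absent from your proposal: $\log(xb(x))=o(|\log\bar{F}_R(x)|)$, proved via l'H\^opital's rule from the representation~\eqref{eq:GumbelMDA} (using $xb(x)\to\infty$ and $a'(t)\to 0$). Equivalently, your sandwich can be repaired by letting $\delta$ depend on $u$ at the natural scale $\delta(u)\asymp v^\star a(u)/u$, but that requires the same estimate. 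A smaller instance of the same oversight: for $\chi_X=0$ when $\zeta<1$ you compare $\P(RW_\wedge\geq u)$ with $\bar{F}_R(u)$, but when $\P(W=1)=0$ the margin is itself $o(\bar{F}_R(u))$, carrying a factor $(xb(x))^{-\alpha_W}$; you need the polynomial factor $(rb(r))^\rho$ in~\eqref{eq:DR} to beat it, exactly as in the paper's~\eqref{eq:Gumbel_chi}.
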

We observe that asymptotic independence arises for $\zeta<1$, with the residual tail dependence coefficient determined by the properties of $\bar{F}_R$. The following corollary covers an important subclass of distributions in the Gumbel MDA.
\begin{cor}
\label{cor:RWeib}
If $\zeta<1$ and $-\log\bar{F}_R  \in\RV_{\delta}^\infty$, $\delta \geq 0$, then $\eta_X = \zeta^{\delta}$.
\end{cor}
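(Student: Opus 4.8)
The plan is to specialize the general formula already furnished by Proposition~\ref{prop:RGumbel}. Since the hypothesis includes $\zeta<1$, we are squarely in the first case of that proposition, which gives $\chi_X=0$ and
\begin{align*}
 \eta_X = \lim_{x\to r^\star} \frac{\log \bar{F}_R(x)}{\log \bar{F}_R(x/\zeta)},
\end{align*}
valid when $r^\star=\infty$. My first step is to observe that the standing assumption $-\log\bar{F}_R\in\RV_\delta^\infty$ already forces $r^\star=\infty$: regular variation at infinity is a statement about all sufficiently large arguments, so $\bar{F}_R$ must be positive on an unbounded set, and $\bar{F}_R(x)\to 0$ as $x\to\infty$. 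Consequently $g(x):=-\log\bar{F}_R(x)\to\infty$, so the ratio defining $\eta_X$ is well-defined and positive for large $x$ (both numerator and denominator are negative, the latter of larger magnitude since $\bar{F}_R$ is decreasing and $x/\zeta>x$).

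The computation is then immediate. Writing the formula in terms of $g$, we have $\eta_X = \lim_{x\to\infty} g(x)/g(x/\zeta)$. Substituting $t=x/\zeta$, so that $x=\zeta t$ and $t\to\infty$ as $x\to\infty$, the ratio becomes $g(\zeta t)/g(t)$. By the definition of $\RV_\delta^\infty$ applied at the fixed point $\zeta>0$, we get $g(\zeta t)/g(t)\to\zeta^\delta$, whence $\eta_X=\zeta^\delta$, as claimed.

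I do not anticipate any substantive obstacle, since all the real work resides in Proposition~\ref{prop:RGumbel}, part~\ref{RGumbelitem1}, which this corollary merely evaluates under the extra regularity $-\log\bar{F}_R\in\RV_\delta^\infty$. The only points warranting a line of care are confirming $r^\star=\infty$ (so that $\eta_X$ is defined and the logarithms tend to $-\infty$) and applying the regular-variation relation in the correct direction, noting $\zeta\in[1/2,1)$ so that the scaling factor $1/\zeta$ genuinely exceeds $1$. As a sanity check, the boundary value $\delta=0$ yields $\eta_X=\zeta^0=1$, consistent with the borderline ``near-dependent'' form of asymptotic independence (recall $\chi_X=0$ throughout this case), while $\delta>0$ gives $\eta_X=\zeta^\delta\in(1/2,1)$ when $\zeta>1/2$ and smaller values for larger $\delta$, reflecting how heavier decay of $\bar{F}_R$ weakens the residual dependence.
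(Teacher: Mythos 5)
Your argument is correct and coincides with the paper's (implicit) proof: the corollary is stated without a separate proof precisely because it is the direct evaluation of the limit $\eta_X=\lim_{x\to r^\star}\log\bar{F}_R(x)/\log\bar{F}_R(x/\zeta)$ from Proposition~\ref{prop:RGumbel}, part~\ref{RGumbelitem1}, under the hypothesis $-\log\bar{F}_R\in\RV_\delta^\infty$, including your correct preliminary observation that this hypothesis forces $r^\star=\infty$ so the limit is defined. The only blemish is your closing aside: for $\zeta\in(1/2,1)$ and large $\delta$ one can have $\zeta^\delta\leq 1/2$ (e.g.\ Example~\ref{ex:Lp} gives $\eta_X=2^{-\delta/p}$, which is $1/4$ for $\delta=2$, $p=1$), so the claim $\eta_X\in(1/2,1)$ whenever $\zeta>1/2$ is inaccurate, though this sanity check plays no role in the proof itself.
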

If $\delta = 0$, as in the case of log-normal $R$, then $\eta_X=1$. Another possibility in the Gumbel MDA is $-\log \bar{F}_R(x) \sim \exp(x)$, as in the reverse Gumbel distribution, for which $\eta_X=0$. If $r^\star<\infty$, then when $\zeta<1$ the upper endpoint of $X_\wedge$ is less than that of $X_1$, so $\eta_X$ is not defined.

If $\zeta=1$, then one has asymptotic independence only if $\P(W=1) = \P(Z\in I_{\nu})>0$, which is equivalent to $\nu(x,y)$ behaving locally like the $L_\infty$ norm around the point $x=y$. If $\zeta=1$ and $\P(W=1)=0$, then $b_1=b_2=1/2$, and the ``pointy" shape of such norms induces asymptotic dependence. The following example illustrates this case.

\begin{example}
 \label{ex:AD}
 Let $\nu(x,y) = \theta \max(x,y) + (1-\theta)\min(x,y)$, $\theta>1$; see Figure~\ref{fig:nutaukappa} for an illustration. We have $\zeta=1$ and $b_1=b_2=1/2$, so we can calculate $\chi_X>0$ using Proposition~\ref{prop:RGumbel}, by evaluating $\tau_1'(1/2_+) = 4\theta$ and $\tau_2'(1/2_-) = -4(\theta-1)$. This yields $\chi_X = 2(\theta-1)/(2\theta-1)$, which is an increasing function of $\theta$; in particular $\chi_X \to 0_+$ as $\theta \to 1_+$ and $\chi_X \to 1_-$ as $\theta \to +\infty$.
\end{example}

\begin{example}
 \label{ex:Lp}
  
 Let $\nu(x,y) = (x^p+y^p)^{1/p}$, $p\geq1$, so that  $\zeta=2^{-1/p}<1$. In this case $b_1=b_2=1$ and $1-\tau^{-1}(1-s) = ps^{1/p}[1+o(1)]$, i.e., $\gamma=1/p$. For any $Z$ satisfying (Z1), the conditions of Proposition~\ref{prop:RGumbel} are satisfied, and $\eta_X = \lim_{x\to r^\star}\log\bar{F}_{R}(x)/\log\bar{F}_{R}(2^{1/p}x)$. As a concrete example, if $\bar{F}_R(x) = \exp(-x^\delta)$, then $\eta_X = 2^{-\delta/p}$ by Corollary~\ref{cor:RWeib}.
\end{example}

\subsection{Radial variable in negative Weibull MDA}
\label{sec:Rnwei}
Suppose that $R>0$ is in the negative Weibull MDA with upper endpoint $r^\star > 0$, i.e,
\begin{align*}
 \bar{F}_R(r^\star - s) = \ell(s) s^{\alpha_R},  \qquad \ell \in \RV_{0}^0,~ \alpha_R>0;  \label{eq:NegWeibMDA}
\end{align*}
equivalently $\bar{F}_R(r^\star - \cdot) \in \RV_{\alpha_R}^0$. Note that the distribution of $R$ cannot have a point mass on $r^\star$. The general assumptions for $\W$ are the same as in Section \ref{sec:Gumbel}.

\begin{prop}[$R$ in negative Weibull MDA]\label{prop:RNegWeib}
 Assume $\bar{F}_R(r^\star - \cdot) \in \RV_{\alpha_R}^0$ and that  (N1), (N2) and (Z1) hold. Then:
 \begin{enumerate}
  \item If $\zeta<1$, $\chi_X = 0$ and $\eta_X$ is not defined.
  \item If $\zeta=1$, then $b_1=1/2$ and
  \begin{align*}
   \chi_X = \begin{cases}
             0 & \mbox{if }  \P(W=1)>0,\\ 
                \displaystyle \frac{2\tau_2'(1/2_+)}{\tau_2'(1/2_+)-\tau_1'(1/2_-)} & \mbox{otherwise},
            \end{cases}
               \eta_X = \begin{cases}
             \frac{\alpha_R}{1+\alpha_R} & \mbox{if } \P(W=1)>0,\\
             1 & \mbox{otherwise}.
            \end{cases}
  \end{align*}
  \end{enumerate}
\end{prop}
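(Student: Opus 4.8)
The plan is to express both the joint and the marginal tail as the tail of a product $RA$ with $A\in[0,1]$ independent of $R$, and to read off $\chi_X$ and $\eta_X$ from the relative indices of regular variation at the endpoint $r^\star$. First I would record the reductions. Since $W_1\stackrel{d}{=}W_2$ we have $F_{X_1}=F_{X_2}$, and because $R>0$ the event $\{X_1\ge x,X_2\ge x\}$ equals $\{RW_\wedge\ge x\}$; thus the joint exceedance probability is $\P(RW_\wedge\ge x)$ and the marginal is $\P(RW_1\ge x)$. As $W_1\le 1$ has essential supremum $1$, the upper endpoint of $X_1$ is $r^\star$, whereas $W_\wedge\le\zeta$ gives upper endpoint $r^\star\zeta$ for $RW_\wedge$. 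Part~1 is then immediate: if $\zeta<1$ the joint endpoint $r^\star\zeta$ lies strictly below $r^\star$, and since $F_{X_1}^{-1}(q)\to r^\star$, for $q$ close enough to $1$ we have $F_{X_1}^{-1}(q)>r^\star\zeta$, so the joint exceedance probability is identically zero; hence $\chi_X=0$ and, by the convention of Section~\ref{sec:Introduction}, $\eta_X$ is not defined.

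For Part~2, $\zeta=\tau(1/2)=1$ gives $1/2\in I_\nu\subseteq[1/2,1]$, whence $b_1=1/2$. The geometric fact I would establish next is that, whenever $\zeta=1$, the increasing branch satisfies $\gamma_1=1$ with $\tau_1'(1/2_-)$ finite. This follows from the symmetry identity $\tau(1/2+v)=\tau_1(1/2-v)\,(1/2+v)/(1/2-v)$ combined with $\tau\le 1$, which forces $1-\tau_1(1/2-v)\ge 2v/(1/2+v)\sim 4v$; together with (N2), namely $1-\tau_1(1/2-\cdot)\in\RV_{1/\gamma_1}^0$ with $\gamma_1\le 1$, this excludes $\gamma_1<1$.

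The engine of the proof is the tail of a product near the finite endpoint. For $A\in[0,1]$ independent of $R$ with essential supremum $1$, set $u=r^\star-x$ and $B=1-A$; conditioning on $B$ gives $\P(RA\ge r^\star-u)=\E[\bar{F}_R((r^\star-u)/(1-B))]$, and $(r^\star-u)/(1-b)=r^\star-(u-r^\star b)\{1+o(1)\}$ together with $\bar{F}_R(r^\star-\cdot)\in\RV_{\alpha_R}^0$ yields two regimes. (a) If $\P(A=1)=p>0$ and $\P(0<B\le t)=O(t^\delta)$ for some $\delta>0$, the atom dominates and $\P(RA\ge x)\sim p\,\bar{F}_R(x)\in\RV_{\alpha_R}^0$. (b) If $B$ has no atom at $0$ and $\P(B\le t)\sim c\,t^\delta L(t)\in\RV_\delta^0$, then Karamata's theorem gives $\P(RA\ge x)\in\RV_{\alpha_R+\delta}^0$ with a constant that is linear in $c$ and otherwise depends only on $(\alpha_R,\delta,r^\star)$.

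Finally I would apply this with $A=W_1$ and $A=W_\wedge$; localization to $Z$ near $1/2$ (resp.\ near $I_\nu$) is automatic because $\bar{F}_R$ vanishes beyond $r^\star$. If $\P(W=1)>0$ then $b_2>1/2$, the marginal carries the atom $p=\P(Z\in I_\nu)$ and is in regime (a) ($\RV_{\alpha_R}^0$), whereas $W_\wedge=1$ only at the single point $Z=1/2$, so the joint is in regime (b) with $1-W_\wedge\sim\kappa_1|Z-1/2|$ ($\delta=1$, using $\gamma_1=1$) and is $\RV_{\alpha_R+1}^0$; comparing gives $\chi_X=0$ and $1/\eta_X=(\alpha_R+1)/\alpha_R$, i.e.\ $\eta_X=\alpha_R/(1+\alpha_R)$. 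If $\P(W=1)=0$ then $b_1=b_2=1/2$ and both tails are in regime (b) with $\delta=1$: with $\kappa_1=\tau_1'(1/2_-)$ and $\kappa_2=-\tau_2'(1/2_+)$ one finds $\P(1-W_1\le t)\sim f_Z(1/2)(\kappa_1^{-1}+\kappa_2^{-1})t$ and $\P(1-W_\wedge\le t)\sim 2f_Z(1/2)\kappa_1^{-1}t$, so both tails are $\RV_{\alpha_R+1}^0$ with the same slowly varying part; hence $\eta_X=1$, the universal constant cancels, and $\chi_X$ equals the ratio of leading constants, $\tfrac{2\kappa_1^{-1}}{\kappa_1^{-1}+\kappa_2^{-1}}=\tfrac{2\tau_2'(1/2_+)}{\tau_2'(1/2_+)-\tau_1'(1/2_-)}$. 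The main obstacles I anticipate are the exact-constant computation in regime (b)—via Karamata, with the interchange and uniformity of the regular-variation approximations over the shrinking neighbourhood of $Z=1/2$ needing justification—and the geometric lemma $\gamma_1=1$.
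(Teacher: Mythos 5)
Your proposal is correct and takes essentially the same route as the paper's proof: both arguments reduce $\chi_X$ and $\eta_X$ to the regular-variation indices at the endpoint of $\bar{F}_{RW}$ and $\bar{F}_{RW_\wedge}$, obtained by combining the locally linear behaviour of $\bar{F}_W(1-\cdot)$ and $\bar{F}_{W_\wedge}(1-\cdot)$ near $Z=1/2$ (the paper's Lemma~\ref{lem:taylor}) with product-tail asymptotics for $R$ in the negative Weibull MDA (the paper's Lemma~\ref{lem:HashorvaThm3}, cited from \citet{Hashorvaetal2010}), which you instead re-derive by conditioning on $B=1-A$ and applying Karamata --- a legitimate economy, since in the one case where constants matter ($\P(W=1)=0$, both tails in $\RV_{\alpha_R+1}^0$) your unspecified universal factor depending only on $(\alpha_R,\delta,r^\star)$ cancels in the ratio, leaving exactly the paper's ratio of linear coefficients $2\tau_2'(1/2_+)/(\tau_2'(1/2_+)-\tau_1'(1/2_-))$, while $\eta_X$ needs only the indices. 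Your symmetry identity $\tau(1/2+v)=\tau_1(1/2-v)(1/2+v)/(1/2-v)$ forcing $\gamma_1=1$ and $\tau_1'(1/2_-)\geq 4$ when $\zeta=1$ is a small genuine addition: the paper gets finiteness of $\tau_1'(1/2_-)$ from convexity (Lemma~\ref{lem:gamleq1}) but takes the non-vanishing of the one-sided derivative as a hypothesis in Lemma~\ref{lem:taylor}, so you prove a fact the paper leaves implicit.
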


\begin{example}
\label{ex:max}
Let $\nu(x,y) = \max(x,y)$, so that $\zeta=1$, $b_1=1/2$, $b_2=1$ and $\P(W=1) = 1/2$. For $\bar{F}_R = (1+\lambda r)_+^{-1/\lambda}$, $\lambda <0$ with $r^\star = -1/\lambda$, Proposition~\ref{prop:RNegWeib} gives  $\eta_X=(1-\lambda)^{-1}$, noting $\alpha_R=-1/\lambda$. This represents (part of) a model given in \citet{Wadsworthetal2017}.
\end{example}
Table~\ref{tab:simu-clayton} summarizes the tail dependence for $\X$ using the norms from Examples~\ref{ex:AD}--\ref{ex:max}, under different tail behaviors for $R$.

\begin{table}[h] 
\begin{center}
\begin{tabular}{|l|rr|rr|rr|rr|rr|} \hline
  & \multicolumn{2}{c|}{$\nu = L_p$ norm, $p\geq 1$} & \multicolumn{2}{c|}{$\nu = L_\infty$ norm} & \multicolumn{2}{c|}{$\nu=\theta\max+(1-\theta)\min$, $\theta\geq1$}\\
                                  Radial variable $R$ & $\chi_X$ & $\eta_X$    & $\chi_X$ & $\eta_X$ & $\chi_X$ & $\eta_X$ \\ \hline
Regularly varying &&&&&&\\
\qquad $\bar{F}_R \in \RV_{-\alpha}^{\infty}$, $\alpha>0$ & eq.~\eqref{eq:Rrvchi} & 1  &  eq.~\eqref{eq:Rrvchi} & 1   &  eq.~\eqref{eq:Rrvchi} & 1   \\ 
Log-normal &&&&&&\\
\qquad $-\log\bar{F}_R(r) \sim k(\log r)^2$, $k>0$ & 0 & 1  &  0   & 1 &  $2(\theta-1) / (2\theta-1)$ & 1\\ 
Weibull-like &&&&&&\\
\qquad  $-\log\bar{F}_R(r) \sim k r^\delta$, $k,\delta > 0$ & 0 & $2^{-\delta/p}$  &    0 &  1  &  $2(\theta-1) / (2\theta-1)$   & 1 \\
\ a)  exponential ($\delta=1$) & 0 & $2^{-1/p}$   &   0 & 1  &  $2(\theta-1) / (2\theta-1)$ & 1  \\
\ b)  normal ($\delta=2$)   & 0 & $2^{-2/p}$  &   0 & 1  & $2(\theta-1) / (2\theta-1)$   & 1  \\
Log of exponential &&&&&&\\
\qquad  $-\log \bar{F}_R(r)\sim k \exp(r)$, $k>0$ & 0 & 0 &    0 & 1  & $2(\theta-1) / (2\theta-1)$  & 1  \\
Exponential behavior at $r^\star <\infty$ &&&&&& \\
\qquad  $\bar{F}_R(r^\star - 1/r) \sim k\exp(-r)$, $k>0$ & 0 & ND  &    0& 1   &  $2(\theta-1) / (2\theta-1)$  & 1\\
Negative Weibull &&&&&& \\
\qquad  $\bar{F}_R(r^\star - \cdot)  \in \RV_{\alpha}^0$, $\alpha>0$ & 0 & ND  &    0 & $\displaystyle \alpha/(1 + \alpha)$ &  $2(\theta-1) / (2\theta-1)$  & 1\\
\ a) uniform ($r^\star=1, \alpha = 1$) & 0 & ND  &   0 & 1/2  &  $2(\theta-1) / (2\theta-1)$ & 1  \\ \hline
\end{tabular}
\end{center}
\caption{Values of $\chi_X$ and $\eta_X$ for $\X=R\W$ with different tail decay rates of the variable $R$ and angular variables defined on $L_p$, $L_\infty$ norms, and the norm of Example~\ref{ex:AD}. ND = not defined. } \label{tab:simu-clayton}
\end{table}

\section{Unconstrained angular variables}
\label{sec:Unconstrained}

We now treat the case where the support $\mathcal{W}$ is two-dimensional. As noted in Section~\ref{sec:Introduction}, there are cases where $\W$ itself might have a random scale representation, and by redefining the scaling variable we get back to the situation of one-dimensional $\mathcal{W}$. We thus focus on constructions where this is not necessarily the case. To avoid additional complications we assume throughout this section that $W_1$ and $W_2$ share the common marginal distribution $F_W$. We also generally assume that the tail dependence coefficient $\chi_W$ and the residual tail dependence coefficient $\eta_W$ of $\W$ exist, although some results may still be obtained with the latter undefined.

In Section~\ref{sec:Constrained}, the constraints imposed by $\mathcal{W}$ being a unit sphere gave bounded marginal distributions for $W_j$, $j=1,2$, and deterministic dependence between $\W$. For two-dimensional $\mathcal{W}$, the variety of marginal and dependence behaviors possible for $\W$ means that systematic characterization according only to the MDA of $R$ is more difficult.
In fact, we need to consider different tail decays of both the radial variable $R$ and the angular variable $W$ since the combination of the two is crucial to classify the extremal dependence of $\X = R \W$.  We focus on some interesting sub-classes that still incorporate a wide variety of structures and cover most of the parametric univariate distributions available for $R$ and $W$.

This section is structured according to the tail heaviness assumed for $R$, $W$, or both of them. In decreasing order we consider distributions with superheavy tails, regularly varying distributions, distributions of log-Weibull and Weibull type, and finally distributions with finite upper endpoint in the negative Weibull domain of attraction.
Table~\ref{tab:unconstrained} summarizes the general results developed in the following, and Table \ref{tab:indep} contains the extremal dependence coefficients for all combinations of tail decays of $R$ and $W$ for the specific, yet interesting example where $W_1$ and $W_2$ are independent.

\begin{table}[h]
	\centering
	\begin{tabular}{|l|ccc|} \hline
Radius $R$ &  additional assumptions &  $\chi_X$ & $\eta_X$ \\
\hline 
Superheavy tails &  & &  \\
\ a) $\overline{F}_{W}(x)/\overline{F}_{R}(x)\rightarrow c$ & Prop. \ref{prop:RSHT} & $\frac{1+c\chi_W}{1+c}$ &  $1$ \\
\ b) $\overline{F}_R=o(\overline{F}_{W})$  & $\chi_W>0$ & $\chi_W$ & $1$  \\
 & $\chi_W=0$, $\overline{F}_R(x)\leq C\overline{F}_{W_\wedge}(x)$ & $0$ & $\eta_W$  \\
 & $\chi_W=0$, $\overline{F}_{W_\wedge}=o(\overline{F}_R)$  & $0$ &  \eqref{eq:etalimfrac} \\
\hline
$\mathrm{RV}_{-\alpha_R}^\infty$ & & &  \\
\ a) $\E (W^{\alpha_R+\varepsilon})<\infty$ & $\P(W>0)=1$ & \eqref{eq:Rrvchi} & $1$ \\
\ b) $\bar{F}_{W}\in \RV_{-\alpha_W}^{\infty}$ &  &    &  \\
\ \ (i) $\alpha_R>\alpha_W$  &
&   $\chi_W$  & \eqref{eq:etaWRV}  \\
\ \ (ii) $\alpha_R=\alpha_W$  & Prop. \ref{prop:samealpha} 
&  Prop. \ref{prop:samealpha}  & 1\\
 \hline
$\mathrm{LWT}_{\beta_R>1}$ & $F_W,F_{W_\wedge}\in\mathrm{LWT}_{\beta_R}$  & $\chi_W$ &  \eqref{eq:lightS}  \\
\hline
$\mathrm{WT}_{\beta_R}$ & $F_{W}\in\mathrm{WT}_{\beta_W},F_{W_\wedge}\in\mathrm{WT}_{\beta_{W_\wedge}}$ & Prop. \ref{prop:weib} &   Prop.~\ref{prop:weib}\\\hline
Gumbel & Prop.~\ref{prop:negwei}\eqref{item9.1} & $\chi_W$ &   $1$\\\hline
Negative~Weibull &  &  &  \\
 & Prop.~\ref{prop:negwei}\eqref{item9.2} & $\chi_W$ &   $\eta_W$\\
& Prop.~\ref{prop:negwei}\eqref{item9.3} & $0$ &  \eqref{eq:etanegweib}\\
\hline

\end{tabular}
\caption{Tail dependence summaries $\chi_X$ and $\eta_X$ for $\X=R\W$ with different tail decay rates of the radial variable $R>0$ and unconstrained variables $W_1\stackrel{d}{=}W_2$. 
} 
\label{tab:unconstrained}
\end{table}

\begin{table} 
\begin{center}
\hspace*{-3.5em}
\begin{tabular}{|l|c|c|c|c|c|} \hline
  \multicolumn{1}{|r|}{Angle $W$} & {Super-heavy} & {Reg.~varying} & log-Weibull $(\beta_W > 1)$ & Weibull & Neg.~Weibull  \\ 
                                 Radius $R$ & & & & &\\ \hline
Super-heavy &  $ \displaystyle  \chi_X = (1+c)^{-1}$  &  $\chi_X = 1$  & $\chi_X = 1$ & $\chi_X = 1$ & $\chi_X = 1$\\
 &  $\eta_X$: Prop.~\ref{prop:RSHT}\eqref{item2}& & & &\\ \hline
Reg.~varying & * & 
$\alpha_R < \alpha_W : \chi_X = \eqref{eq:Rrvchi} > 0$
& $\chi_X = \eqref{eq:Rrvchi} > 0$  & $\chi_X = \eqref{eq:Rrvchi} > 0$ & $\chi_X = \eqref{eq:Rrvchi} > 0$\\
&& $\alpha_R = \alpha_W$: Prop.~6  &&&\\ 
&& $\alpha_W < \alpha_R < 2\alpha_W$:&&&\\
&& $\quad \eta_X = \alpha_W/\alpha_R$ &&&\\
&& $\alpha_R > 2 \alpha_W$: $\eta_W = 1/2$ &&& \\\hline
log-Weibull&* & * & $\beta_R = \beta_W: \eta_X = \eqref{eq:lightS}$  & unknown & $\chi_X = 0$\\
($\beta_R > 1$) & &  &  &&$\eta_X = 1$\\\hline
Weibull &*& * & unknown & $\eta_X = 2^{-\beta_R/(\beta_R + \beta_W)}$& $\chi_X = 0$ \\
&& &  && $\eta_X = 1$ \\\hline
Neg.~Weibull &* &* & * & * &  $\displaystyle  \eta_X = \frac{\alpha_W + \alpha_R}{2\alpha_{W} + \alpha_R}$\\
\hline
\end{tabular}
\end{center}
\caption{The values of $\chi_X$ and $\eta_X$ for $\X=R\W$ with $W_1,W_2 \stackrel{d}{=} W$ independent, 
  with different tail decay rates of the radial and angular variables. The *'s indicate that multiplication with $R$ does not change the tail dependence of $\W$, i.e., $\chi_X=\chi_W= 0$ and $\eta_X=\eta_W= 1/2$. The combinations of Weibull and log-Weibull tails remain open problems.  
}\label{tab:indep}
\end{table}

\subsection{Superheavy-tailed variables} 
\label{sec:RSHT}
Suppose that $R$ or $W$ is superheavy-tailed, i.e., $\log R$ or $\log W$ is heavy-tailed. This case naturally arises when considering random location constructions $\log R+(\log W_1,\log  W_2)$; we thus further assume $W>0$ so that $\log W_j$, $j=1,2$, are well defined.  

\begin{prop}[Superheavy-tailed variables]\label{prop:RSHT} \ 
\begin{enumerate}
	\item  \label{item1} If ${F}_{\log R}\in\mathrm{CE}_0$ and $\overline{F}_{W}(x)/\overline{F}_{R}(x)\rightarrow c\geq 0$ as $x\rightarrow\infty$,  
   then $\eta_X=1$ and
	\begin{equation}\label{eq:chiSHT}
	\chi_X=(1+c\,\chi_W)/(1+c)>0.
	\end{equation} 
    \item \label{item2} If $F_{\log W}\in\mathrm{CE}_0$ and $\overline{F}_R=o(\overline{F}_{W})$, then $\chi_X=\chi_W$.  If further $F_{\log R}\in\mathrm{CE}_0$  and 
    \begin{enumerate}
    \item  $F_{\log W_\wedge}\in\mathrm{CE}_0$ with $\overline{F}_R(x)/\overline{F}_{W_\wedge}(x)\leq C$ for a constant $C>0$ as $x\rightarrow\infty$, then $\eta_X=\eta_W$; \label{item2a}
    \item   \label{item2b} $\overline{F}_{W_\wedge}=o(\overline{F}_R)$, then, provided the limit exists,
    \begin{equation*}\label{eq:etalimfrac}
	\eta_X=\lim_{x\rightarrow\infty} \log \overline{F}_{W}(x) / \log \overline{F}_{R}(x).
	\end{equation*}
    \end{enumerate}
\end{enumerate}\end{prop}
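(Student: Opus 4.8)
The plan is to pass to logarithms, turning the multiplicative structure into a convolution to which the theory of subexponential ($\mathrm{CE}_0$) distributions applies. Writing $\log X_j = \log R + \log W_j$ and $\log X_\wedge = \log R + \log W_\wedge$ (using $X_\wedge = R\,W_\wedge$, valid since $R>0$), the relevant tails become those of independent sums. Throughout I would exploit the equal-margin case: since $X_1\stackrel{d}{=}X_2$, substituting $u=F_X^{-1}(q)$ and $1-q=\overline{F}_X(u)$ gives $\chi_X=\lim_{u\to\infty}\overline{F}_{X_\wedge}(u)/\overline{F}_X(u)$, while from~\eqref{eq:eta} the residual coefficient satisfies $\eta_X=\lim_{u\to\infty}\log\overline{F}_X(u)/\log\overline{F}_{X_\wedge}(u)$, the slowly varying factor being negligible on the log scale; the analogous identities hold for $\W$. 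The workhorse is the convolution closure of $\mathrm{CE}_0$: if $F\in\mathrm{CE}_0$ and $\overline{F}_G(x)/\overline{F}(x)\to c\in[0,\infty)$, then $\overline{F\star G}(x)\sim(1+c)\overline{F}(x)$. Since $\overline{F}_Q(x)=\overline{F}_{\log Q}(\log x)$, every tail-ratio hypothesis on $(R,W,W_\wedge)$ translates verbatim into one for their logarithms.

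For item~\ref{item1}, the hypothesis $\overline{F}_W/\overline{F}_R\to c$ reads $\overline{F}_{\log W}\sim c\,\overline{F}_{\log R}$ with $F_{\log R}\in\mathrm{CE}_0$, so the convolution rule gives $\overline{F}_X\sim(1+c)\overline{F}_R$. By the definition of $\chi_W$, $\overline{F}_{W_\wedge}\sim\chi_W\overline{F}_W\sim c\chi_W\overline{F}_R$, and a second application of the rule to $\log X_\wedge$ yields $\overline{F}_{X_\wedge}\sim(1+c\chi_W)\overline{F}_R$. Dividing gives $\chi_X=(1+c\chi_W)/(1+c)>0$, whence asymptotic dependence and $\eta_X=1$.

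For item~\ref{item2}, now $F_{\log W}\in\mathrm{CE}_0$ and $\overline{F}_{\log R}=o(\overline{F}_{\log W})$, so the convolution rule (the $c=0$ instance) gives $\overline{F}_X\sim\overline{F}_W$. To obtain $\chi_X=\chi_W$ I would split on $\chi_W$: when $\chi_W>0$, $\log W_\wedge$ is tail-equivalent to the subexponential $\log W$ hence itself in $\mathrm{CE}_0$, and $\overline{F}_{\log R}=o(\overline{F}_{\log W_\wedge})$ yields $\overline{F}_{X_\wedge}\sim\chi_W\overline{F}_W$; when $\chi_W=0$ I would instead bound $\P(RW_\wedge>u)=\E[\overline{F}_{W_\wedge}(u/R)]\le\varepsilon\,\P(RW>u)+\overline{F}_R(u/x_0)$ for large $u$, choosing $x_0\ge1$ with $\overline{F}_{W_\wedge}\le\varepsilon\overline{F}_W$ beyond $x_0$, and using long-tailedness (so $\overline{F}_W(u/x_0)\sim\overline{F}_W(u)$) together with $\overline{F}_R=o(\overline{F}_W)$ to conclude $\overline{F}_{X_\wedge}=o(\overline{F}_W)$, i.e.\ $\chi_X=0$. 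For the residual-coefficient refinements, which additionally assume $F_{\log R}\in\mathrm{CE}_0$: in case~\ref{item2a}, $\overline{F}_{\log R}=O(\overline{F}_{\log W_\wedge})$ with $\log W_\wedge$ subexponential forces $\overline{F}_{X_\wedge}\asymp\overline{F}_{W_\wedge}$ (domination from above, the trivial $\P(A+B>y)\ge\P(B>y)\P(A\ge0)$ from below), so $\log\overline{F}_{X_\wedge}\sim\log\overline{F}_{W_\wedge}$ and $\eta_X=\lim\log\overline{F}_W/\log\overline{F}_{W_\wedge}=\eta_W$; in case~\ref{item2b}, $\overline{F}_{\log W_\wedge}=o(\overline{F}_{\log R})$ with $F_{\log R}\in\mathrm{CE}_0$ gives $\overline{F}_{X_\wedge}\sim\overline{F}_R$, hence $\eta_X=\lim\log\overline{F}_W/\log\overline{F}_R$.

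The main obstacle is the $\chi_W=0$ subcase and case~\ref{item2a}, where the pertinent tail ratios are only bounded, not convergent, so clean convolution equivalence no longer applies directly. I expect the care to lie in (i) the $\varepsilon$--$x_0$ truncation argument, which crucially exploits long-tailedness to replace $\overline{F}_W(u/x_0)$ by $\overline{F}_W(u)$, and (ii) invoking the correct domination lemma for subexponential convolutions to upgrade $O(\cdot)$ to the two-sided $\asymp$ needed for the log-scale identity defining $\eta_X$. Keeping the bookkeeping between $\sim$, $O$, and $o$ consistent across the $\log$-transform is where errors would most easily arise.
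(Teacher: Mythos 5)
Your proposal is correct and, for the most part, retraces the paper's own route: pass to logarithms, reduce $\chi_X$ and $\eta_X$ to the tail ratios $\bar{F}_{X_\wedge}/\bar{F}_X$ and $\log\bar{F}_X/\log\bar{F}_{X_\wedge}$ via the common margins, and drive everything with the $\alpha=0$ case of Lemma~\ref{lem:conveq} (where the moment factors in \eqref{eq:convtail} equal one, giving $\overline{F_1\star F_2}\sim(1+c)\overline{F}_1$); items~\ref{item1}, the $\chi_W>0$ part of item~\ref{item2}, and case~\ref{item2b} are essentially verbatim the paper's argument. The genuine divergence is the $\chi_W=0$ subcase of item~\ref{item2}: the paper disposes of it by citing Corollary~5(ii) of \citet{Fossetal.2009} to get $\bar{F}_{X_\wedge}\sim\bar{F}_R+\bar{F}_{W_\wedge}+o(\bar{F}_W)$, whereas your $\varepsilon$--$x_0$ truncation of $\E[\bar{F}_{W_\wedge}(u/R)]$, combined with long-tailedness of $F_{\log W}$ (implied by $\mathrm{CE}_0$) to absorb the shift $u/x_0\mapsto u$, is a correct, self-contained and more elementary substitute that avoids the external convolution result entirely; it is also closer in spirit to how the paper later handles the analogous subcase of Proposition~\ref{prop:samealpha} by a mixture/coupling device. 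In case~\ref{item2a} your two-sided bound $\bar{F}_{X_\wedge}\asymp\bar{F}_{W_\wedge}$ is, if anything, slightly more careful than the paper's reduction to ``the two boundary cases'' $\overline{F}_R=o(\overline{F}_{W_\wedge})$ and $\overline{F}_R\sim C\overline{F}_{W_\wedge}$, since the hypothesis only bounds the ratio and does not force it to converge; the domination lemma you gesture at can be made concrete inside the paper's toolkit by majorizing $F_{\log R}$ stochastically with the tail-equivalent subexponential distribution $\bar{G}=\min\bigl(1,C'\,\bar{F}_{\log W_\wedge}\bigr)$ for large $C'$ and applying \eqref{eq:convtail} to $G\star F_{\log W_\wedge}$, while your lower bound $\P(A+B>y)\geq\P(B>y)\P(A\geq -M)$ (shifted by long-tailedness, noting $\P(\log R\geq -M)>0$ since $F_{\log R}\in\mathrm{CE}_0$ has unbounded support) settles the other side; both constants then vanish on the log scale, exactly as needed for $\eta_X=\eta_W$.
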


\begin{example}[Independence model]\label{ex:indep1}
  In order to illustrate the results of this section, we consider the example 
  where $R$, $W_1$ and $W_2$ are independent.
  In this case $\bar F_{W_\wedge} = (\bar F_W)^2$, $\chi_W = 0$ and $\eta_W = 1/2$.
If $F_{\log R} \in\mathrm{CE}_0$ and $\overline{F}_{W}(x)/\overline{F}_{R}(x)\rightarrow c\geq 0$ as $x\rightarrow\infty$, then Proposition~\ref{prop:RSHT}\eqref{item1} yields asymptotic dependence in $\X$ with $\chi_X = (1+c)^{-1}$. Hence if $F_{\log R}\in\mathrm{CE}_0$ and $W$ has a comparable tail, then $\chi_X \in (0,1)$, whilst if $W$ has tail lighter than superheavy, then $\chi_X=1$.
 On the other hand, if $W$ is superheavy-tailed with $F_{\log W}\in\mathrm{CE}_0$,  then $W_\wedge$ is also superheavy-tailed. If $R$ has lighter tail than $W_\wedge$, then $\overline F_{R}(x) \leq C\overline F_{W_\wedge}(x)$ for large $x$ with some $C>0$, and by Proposition~\ref{prop:RSHT}\eqref{item2a} we have $\chi_X=0$ 
  and $\eta_X = 1/2$. The case $\eta_X\not=\eta_W$ may arise when the tail of $W$ dominates the tail of $R$ and the tail of $R$ dominates the tail of $W_\wedge$. For a concrete example, consider log-Weibull tails in $W$ and $R$ with $\overline{F}_W(\exp (x)) \sim \exp(-x^\beta)$ and $\overline{F}_R(\exp (x)) \sim \exp(-(1+c)x^\beta)$, where $0<\beta,c<1$. Then, $\eta_X=(1+c)^{-1}$ according to Proposition~\ref{prop:RSHT}\eqref{item2b}.  The first row and column of Table \ref{tab:indep} summarize these results. 
\end{example}

\subsection{Regularly varying variables}
\label{sec:RRV}

In this section we consider the case where $R$, $W$ or both of them are regularly varying.
When $R$ is regularly varying with index $\alpha_R > 0$ and $\E(W^{\alpha_R+\varepsilon}) < \infty$ for some $\varepsilon >0$, then the tail dependence coefficient $\chi_X$ is as described in Proposition~\ref{prop:Frechet_chi} in Section~\ref{sec:Frechet}. We firstly consider the case where $W$ is regularly varying with index $\alpha_W> 0$ 
and $R$ is lighter tailed, i.e., either also regularly varying with $\alpha_R > \alpha_W$ or even lighter tailed such as distributions in the Gumbel or negative Weibull domain of attraction. Secondly, we study the case where both $R$ and $W$ are regularly varying with the same index $\alpha_W = \alpha_R$, which turns out to be particularly involved and which requires additional assumptions.

\begin{prop}[$W$ regularly varying with $R$ lighter]\label{prop:WRV}  
Let $\bar{F}_W \in\RV_{-\alpha_W}^\infty$, $\alpha_W \geq 0$, and suppose that either $\bar{F}_R\in\RV_{-\alpha_R}^\infty$ with $\alpha_R > \alpha_W$, or $R$ is in the Gumbel or negative Weibull domain of attraction; denote the latter case by $\alpha_R = +\infty$. Then $\chi_X = \chi_W$ and 
\begin{align}
 \eta_X = \begin{cases}
     \alpha_W/\alpha_R, \quad & \text{if }   \alpha_R  < \alpha_W / \eta_W, \eta_W=0 \mbox{ or $\eta_W$ not defined}, \\
      \eta_W, \quad & \text{if }  \alpha_R  > \alpha_W / \eta_W \text{ or } \alpha_R = +\infty.
   \end{cases} \label{eq:etaWRV}
\end{align}
\end{prop}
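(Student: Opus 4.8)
The plan is to reduce everything to the tail behaviour of the two products $X_1 = RW_1$ and $X_\wedge = RW_\wedge$ (the latter using $R>0$, so $X_\wedge = \min(RW_1,RW_2) = R\min(W_1,W_2)$), and then to apply Breiman's theorem in whichever direction is dictated by comparing tail indices. Under common margins I use the standard characterisations $\chi_X = \lim_{u\to\infty} \bar{F}_{X_\wedge}(u)/\bar{F}_X(u)$ and $\eta_X = \lim_{u\to\infty} \log\bar{F}_X(u)/\log\bar{F}_{X_\wedge}(u)$; the latter holds because the slowly varying prefactor $\ell$ in~\eqref{eq:eta} is negligible on the logarithmic scale.

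First I would establish the marginal tail. In both hypotheses $R$ is lighter tailed than $W$, so $\E(R^{\alpha_W+\varepsilon})<\infty$ for some $\varepsilon>0$ (taking $\varepsilon<\alpha_R-\alpha_W$ when $\bar{F}_R\in\RV_{-\alpha_R}^\infty$, and any $\varepsilon$ when $R$ lies in the Gumbel or negative Weibull MDA, where all moments are finite). Breiman's theorem then yields $\bar{F}_X(u)\sim \E(R^{\alpha_W})\bar{F}_W(u)$, so $X$ is regularly varying with index $\alpha_W$ and $\log\bar{F}_X(u)\sim-\alpha_W\log u$. Next, from~\eqref{eq:eta} applied to $\W$ together with $\bar{F}_W\in\RV_{-\alpha_W}^\infty$, the survival function $\bar{F}_{W_\wedge}(u)=\ell_W(\bar{F}_W(u))\,\bar{F}_W(u)^{1/\eta_W}$ is regularly varying with index $\alpha_W/\eta_W$, as a composition of regularly varying functions with a slowly varying prefactor.

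The core step is to split according to which of $R$ and $W_\wedge$ is heavier. If $\alpha_R>\alpha_W/\eta_W$ (which always holds when $\chi_W>0$, since then $\eta_W=1$ and $\alpha_R>\alpha_W$) or $\alpha_R=+\infty$, then $R$ is lighter tailed than $W_\wedge$ and $\E(R^{\alpha_W/\eta_W+\varepsilon})<\infty$, so Breiman gives $\bar{F}_{X_\wedge}(u)\sim\E(R^{\alpha_W/\eta_W})\bar{F}_{W_\wedge}(u)$; hence $\log\bar{F}_{X_\wedge}(u)\sim-(\alpha_W/\eta_W)\log u$ and $\eta_X=\eta_W$. Conversely, if $\alpha_R<\alpha_W/\eta_W$ — a range that also covers $\eta_W=0$ (super-polynomial decay of $W_\wedge$) and $\eta_W$ undefined (bounded $W_\wedge$), where all moments of $W_\wedge$ are finite — then $W_\wedge$ is lighter tailed than $R$ and $\E(W_\wedge^{\alpha_R+\varepsilon})<\infty$, so Breiman in the reverse direction gives $\bar{F}_{X_\wedge}(u)\sim\E(W_\wedge^{\alpha_R})\bar{F}_R(u)$; hence $\log\bar{F}_{X_\wedge}(u)\sim-\alpha_R\log u$ and $\eta_X=\alpha_W/\alpha_R$. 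This establishes~\eqref{eq:etaWRV}. The identity $\chi_X=\chi_W$ then follows from the same two subcases: when $\chi_W>0$ the first subcase gives $\bar{F}_{X_\wedge}(u)\sim\E(R^{\alpha_W})\chi_W\bar{F}_W(u)\sim\chi_W\bar{F}_X(u)$; when $\chi_W=0$, either $\bar{F}_{X_\wedge}=o(\bar{F}_X)$ because $\bar{F}_{W_\wedge}=o(\bar{F}_W)$ (first subcase), or $\bar{F}_{X_\wedge}\sim\E(W_\wedge^{\alpha_R})\bar{F}_R=o(\bar{F}_W)=o(\bar{F}_X)$ since $\alpha_R>\alpha_W$ (second subcase), so $\chi_X=0$ in both.

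The main obstacle is the correct invocation of Breiman's theorem when $R$ is in the Gumbel or negative Weibull MDA, i.e.\ $\alpha_R=+\infty$: there $R$ is not regularly varying, so one must use the light-multiplier form of Breiman's theorem, valid whenever the multiplier has all moments finite and a tail lighter than any power. A secondary delicate point is justifying the regular variation of $\bar{F}_{W_\wedge}$ with index $\alpha_W/\eta_W$ and, relatedly, treating the genuinely degenerate cases $\eta_W=0$ and $\eta_W$ undefined together with $\alpha_R=+\infty$: when both $R$ and $W_\wedge$ have super-polynomial (or bounded) tails, a direct argument rather than Breiman is needed to confirm that $X_\wedge$ retains super-polynomial decay, so that the conclusion $\eta_X=\eta_W$ still holds.
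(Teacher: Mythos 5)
Your proposal is correct and takes essentially the same route as the paper's proof: Breiman's lemma for the margin to get $\bar{F}_X(x)\sim \E(R^{\alpha_W})\bar{F}_W(x)$, regular variation of $\bar{F}_{W_\wedge}$ with index $\alpha_W/\eta_W$ (or super-polynomial decay/bounded support when $\eta_W=0$ or undefined), and then Breiman applied in the direction dictated by comparing $\alpha_R$ with $\alpha_W/\eta_W$, reading off $\chi_X$ and $\eta_X$ from the resulting tail equivalences. Your explicit handling of $\chi_W=0$ and your flagging of the degenerate corner where $\alpha_R=+\infty$ coincides with $\eta_W=0$ or $\eta_W$ undefined are, if anything, slightly more careful than the paper's write-up, which leaves those points implicit.
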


The case where $R$ and $W$ are regularly varying with the same index $\alpha>0$ leads to various scenarios for the extremal dependence in $(X_1,X_2)$. Since  $\overline{F}_R,\overline{F}_W\in \RV_{-\alpha}^\infty$ is equivalent to $F_{\log R},F_{\log W}\in\mathrm{ET}_{\alpha}$, and $\mathrm{ET}_\alpha$ is closed under convolutions, we have that $F_{\log X}\in\mathrm{ET}_\alpha$ \citep[Lemma 2.5]{Watanabe.2008}.

\begin{prop}[Regularly varying $R$ and $W$ with the same index] \label{prop:samealpha} 
	Let $\bar F_{R},\bar F_{W}\in\RV_{-\alpha}^\infty$ with $\alpha>0$.
    Then $\eta_X=1$, and we have the following:
 \begin{enumerate}
 	\item \label{6item1} If $F_{\log R}\in\mathrm{CE}_\alpha$, and if $\overline{F}_{W}(x)/\overline{F}_{R}(x)\rightarrow c \geq 0$ as $x\rightarrow\infty$, then 
 		\begin{equation*}
 		\chi_X=
        \frac{\E(W_\wedge^\alpha)+c\,\chi_W\,\E(R^\alpha)}{\E(W^\alpha)+c\,\E(R^\alpha)}.
 		\end{equation*}
 	\item \label{6item2} If $F_{\log W}\in\mathrm{CE}_\alpha$ and $\overline{F}_{R} = o(\overline{F}_{W})$, then $\chi_X=\chi_W$. 
 \item \label{6item3} Let $F_{\log R}\in \mathrm{ET}_{\alpha,\beta_R}$ with $\beta_R\geq -1$ and $\E(R^\alpha)=\infty$ if $\beta_R=-1$,   and let $F_{\log W}\in\mathrm{ET}_{\alpha,\beta_W}$. 
\begin{enumerate}
 \item If $\chi_W>0$ and if either $\beta_W>-1$ or $\beta_W=-1$ and $\E(W^\alpha)=\infty$, then $\chi_X=\chi_W$. \label{item:6a}
 \item   If $\chi_W\geq 0$ and if either $\beta_W<-1$ or $\beta_W=-1<\beta_R$ and $\E(W^\alpha)<\infty$, then 
 $\chi_X= \E (W_\wedge^\alpha) / \E (W^\alpha)$. 
 \item If $\beta_R>-1$, $\beta_W>-1$ and $\E(W_{\wedge}^{\alpha+\varepsilon})<\infty$ for some $\varepsilon>0$, then $\chi_X=0$. \label{item:prop6wminlighter}
 \end{enumerate}
 \end{enumerate}
\end{prop}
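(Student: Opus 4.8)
The plan is to reduce the whole statement to the tail asymptotics of two products, $RW_\wedge$ and $RW$, and then extract $\chi_X$ and $\eta_X$ from convolution asymptotics after a logarithmic transform. Because $W_1\stackrel{d}{=}W_2$ and the same $R$ multiplies both coordinates, $X_1\stackrel{d}{=}X_2$ and $X_\wedge=R\,W_\wedge$; writing $u=F_X^{-1}(q)$ in \eqref{eq:chi} and using $\bar F_X=\bar F_{RW}$ gives
\[
\chi_X=\lim_{u\to\infty}\frac{\P(X_1\ge u,X_2\ge u)}{\bar F_X(u)}=\lim_{u\to\infty}\frac{\bar F_{RW_\wedge}(u)}{\bar F_{RW}(u)}.
\]
Everything then hinges on these two product tails.

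For $\eta_X=1$ I would show that both $\bar F_X=\bar F_{RW}$ and $\bar F_{X_\wedge}=\bar F_{RW_\wedge}$ are regularly varying with the same index $-\alpha$. The former holds because $\mathrm{ET}_\alpha$ is closed under convolution, as noted before the statement, so $F_{\log X}\in\mathrm{ET}_\alpha$. For the latter, $W_\wedge$ is regularly varying with index $-\alpha/\eta_W\le-\alpha$ (or lighter than any power); when $\eta_W=1$ the product of two $\RV_{-\alpha}^\infty$ factors is again $\RV_{-\alpha}^\infty$, and when $\eta_W<1$ one has $\E(W_\wedge^\alpha)<\infty$, so Breiman's theorem gives $\bar F_{RW_\wedge}\sim\E(W_\wedge^\alpha)\bar F_R\in\RV_{-\alpha}^\infty$. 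The lower bound $\bar F_{RW_\wedge}(u)\ge c\,\bar F_R(u)$ for some $c>0$ rules out a strictly lighter index. Hence the ratio above is slowly varying in $u$; substituting $s=\bar F_X(u)$ shows $\P(X_1\ge u,X_2\ge u)=\ell(s)\,s$ with $\ell$ slowly varying, so $\eta_X=1$ whether the limiting ratio is positive or zero.

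For $\chi_X$ I would pass to logarithms, so that $\bar F_{RV}(x)=\overline{F_{\log R}\star F_{\log V}}(\log x)$ for $V\in\{W,W_\wedge\}$, with $\int e^{\alpha t}\,F_{\log R}(\d t)=\E(R^\alpha)$ and analogously for $V$. In cases \eqref{6item1} and \eqref{6item2} one convolution factor lies in $\mathrm{CE}_\alpha$, so the convolution-equivalence tail formula applies: if $F_1\in\mathrm{CE}_\alpha$ and $\bar F_2\sim c\bar F_1$ with finite exponential moments, then $\overline{F_1\star F_2}(x)\sim(\hat F_2(\alpha)+c\hat F_1(\alpha))\bar F_1(x)$. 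Applying this to $\log R+\log W$ and to $\log R+\log W_\wedge$ — using $\bar F_{W_\wedge}\sim\chi_W\bar F_W$ when $\chi_W>0$ together with the closure of $\mathrm{CE}_\alpha$ under tail-equivalence — yields both asymptotics, and their ratio produces the formula in \eqref{6item1}. In \eqref{6item2} the heavier factor $W$ dominates both products; here $\E(R^\alpha)<\infty$ follows from $\bar F_R=o(\bar F_W)$ and $\E(W^\alpha)<\infty$, and the formula collapses to $\chi_X=\chi_W$.

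The main obstacle is case \eqref{6item3}, the gamma-type regime $F_{\log R}\in\mathrm{ET}_{\alpha,\beta_R}$, $F_{\log W}\in\mathrm{ET}_{\alpha,\beta_W}$, where $\hat F(\alpha)=\E(\cdot^\alpha)$ may be infinite (precisely when $\beta\ge-1$) and $\mathrm{CE}_\alpha$ fails. Here I would use the sharper convolution asymptotics for $\mathrm{ET}_{\alpha,\beta}$ tails: when $\beta_R,\beta_W>-1$ the regularly varying prefactors convolve, at the density level, to a Beta-function constant times $t^{\beta_R+\beta_W+1}e^{-\alpha t}$, giving an index $\beta_R+\beta_W+1$ strictly heavier than either factor (with logarithmic corrections at the boundary $\beta=-1$); when one $\beta<-1$ the finite exponential moment lets the heavier operand dominate as in the convolution-equivalence case. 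The three subcases then follow by comparing the induced indices of numerator and denominator. In \eqref{item:6a} both operands are heavy, the $\beta_R+\beta_W+1$ law governs both products, and $\bar F_{W_\wedge}\sim\chi_W\bar F_W$ forces the ratio to $\chi_W$; in the remaining subcase (b), where $\E(W^\alpha)<\infty$ and $\beta_R>\beta_W$, $R$ dominates and Breiman-type asymptotics give $\chi_X=\E(W_\wedge^\alpha)/\E(W^\alpha)$; in \eqref{item:prop6wminlighter}, $\E(W_\wedge^{\alpha+\varepsilon})<\infty$ makes the numerator $\sim\E(W_\wedge^\alpha)\bar F_R$ of index $-\alpha$, while the denominator carries the strictly heavier $t^{\beta_R+\beta_W+1}$ factor, so the ratio vanishes and $\chi_X=0$. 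The delicate points throughout are establishing these same-rate convolution tail asymptotics rigorously — especially the $\mathrm{ET}_{\alpha,\beta}$ resonance when $\hat F(\alpha)=\infty$ — and the moment bookkeeping for $\E(R^\alpha)$, $\E(W^\alpha)$ and $\E(W_\wedge^\alpha)$ that selects the correct subcase.
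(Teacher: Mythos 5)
Your overall strategy---pass to $S=\log R$, $V=\log W$, $V_\wedge=\log W_\wedge$ and read $\chi_X$ off convolution tail asymptotics---is exactly the paper's strategy, and your treatment of part~1, of part~3(c), and of $\eta_X=1$ (modulo your reliance on $\eta_W$, where the paper's dichotomy between $\P(W_\wedge\geq 1)>0$ and $W_\wedge\leq 1$ a.s.\ requires no structure on $V_\wedge$ at all) is essentially sound. The first genuine gap is part~2 with $\chi_W=0$, which the statement includes and your sketch skips: there you only know $\bar{F}_{V_\wedge}=o(\bar{F}_V)$, so nothing relates $\bar{F}_{V_\wedge}$ to $\bar{F}_S$, and $F_{V_\wedge}$ need not lie in $\mathrm{CE}_\alpha$ or any tractable class---the claim that the heavier factor $W$ ``dominates both products'' is vacuous for $S+V_\wedge$, since $W$ does not appear in that sum. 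The paper closes this with a coupling idea absent from your proposal: mix $(V_1,V_2)$ with the comonotone pair $(V,V)$ with weight $\epsilon$, so that $V_\wedge^\epsilon$ stochastically dominates $V_\wedge$ and satisfies $\bar{F}_{V_\wedge^\epsilon}(x)/\bar{F}_V(x)\to\epsilon>0$; the already-proved $\chi_W>0$ case then yields $\chi_X\leq\epsilon$ for every $\epsilon>0$, whence $\chi_X=0$.

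In part~3 two further steps would fail as written. For 3(a), your density-level gamma convolution (the Beta-constant times $x^{\beta_R+\beta_W+1}e^{-\alpha x}$ law) holds only for $\beta_R,\beta_W>-1$ strictly; the boundary cases the statement insists on---$\beta_W=-1$ with $\E(W^\alpha)=\infty$, and $\beta_R=-1$ with $\E(R^\alpha)=\infty$---are precisely where the $\Gamma(\beta+1)$ constants degenerate, and ``logarithmic corrections'' is not an argument, because the slowly varying factors are arbitrary. The paper avoids one-term asymptotics entirely via Cline's ratio result (its Lemma~\ref{lem:etratio}(1)): if $M_{F_S}(\alpha)=M_{F_V}(\alpha)=\infty$ and $\bar{F}_{V_\wedge}/\bar{F}_V\to\chi_W>0$, then $\overline{F_S\star F_{V_\wedge}}\sim\chi_W\,\overline{F_S\star F_V}$ directly, with no expansion of either convolution alone. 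For 3(b), ``Breiman-type'' is not available: $\bar{F}_W\in\RV_{-\alpha}^\infty$ forces $\E(W^{\alpha+\varepsilon})=\infty$ for every $\varepsilon>0$, so Breiman's hypothesis fails; the correct ingredient is Cline's Theorem~4(iv) (equivalently, $\mathrm{ET}_{\alpha,\beta<-1}\subset\mathrm{CE}_\alpha$) to get $\bar{F}_{S+V}\sim\E(W^\alpha)\bar{F}_S$, after which the numerator $S+V_\wedge$---where again $F_{V_\wedge}$ has no assumed form and $\chi_W$ may vanish---needs the companion ratio result (Lemma~\ref{lem:etratio}(2)), giving $\overline{F_S\star F_{V_\wedge}}/\overline{F_S\star F_V}\to\E(W_\wedge^\alpha)/\E(W^\alpha)$. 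Only in 3(c) is Breiman legitimate, because $\E(W_\wedge^{\alpha+\varepsilon})<\infty$ is hypothesized there; that part of your sketch matches the paper.
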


\begin{remark}
Proposition~\ref{prop:samealpha} contains certain results of Proposition~\ref{prop:RSHT} as a special case when allowing for $\alpha=0$.  Proposition~\ref{prop:samealpha}\eqref{6item1},\eqref{6item2} treats the case of convolution-equivalent tails in $\log R$ or $\log W$, which are relatively light since the expectation $\E(R^\alpha)$ or $\E(W^\alpha)$ is finite; notice that $\mathrm{ET}_{\alpha,\beta}$ with $\beta<-1$ is an important subclass of $\mathrm{CE}_\alpha$, see Lemma~2.3 of \citet{Pakes.2004}. The tail of $R$ is not dominated by that of $W$ in Proposition~\ref{prop:samealpha}\eqref{6item1}, while it is dominated in Proposition~\ref{prop:samealpha}\eqref{6item2}. Proposition~\ref{prop:samealpha}\eqref{6item3} shifts focus to relatively heavy tails in $R$ with $\E(R^\alpha)=\infty$, such as the gamma tails of $\mathrm{ET}_{\alpha,\beta}$ with $\beta>-1$. 
\end{remark}

\begin{example}[Independence model]\label{ex:indep2}
  We continue Example \ref{ex:indep1}, where now $W_1$ and $W_2$ are independent and 
  regularly varying with index $\alpha_W$, and $R$ is regularly varying with index $\alpha_R$.   If $\alpha_R < \alpha_W$, then we have asymptotic dependence with $\chi_X$ given in \eqref{eq:Rrvchi}. The same is true in general when $W$ has a lighter tail than $R$ that is not necessarily regularly varying.
By Proposition~\ref{prop:WRV}, if $\alpha_W < \alpha_R < 2\alpha_W$, then $\X$ is asymptotically independent with
  $\eta_X = \alpha_W / \alpha_R$, and if $\alpha_R > 2 \alpha_W$, then $\eta_X = 1/2$. In general, if $R$ is even lighter tailed, it does not affect the coefficients $\chi_X$ and $\eta_X$.  
  If $\alpha_R=\alpha_W=\alpha$, then $\eta_X=1$ and different scenarios for $\chi_X$ arise depending on the distributions
  of $R$ and $W$: see Proposition~\ref{prop:samealpha}. Suppose $\alpha>0$; since $\bar{F}_{W_\wedge}\in\RV_{-2\alpha}^\infty$, $\bar{F}_{X_\wedge}\sim \E (W_\wedge^{\alpha}) \bar{F}_R$, and so $\chi_X=\E (W_\wedge^{\alpha})/c>0$ if $\bar{F}_{X}\sim c \bar{F}_{R}$ for some $c>0$. In particular, $c=\E (W^\alpha)$ if $F_W\in\mathrm{ET}_{\alpha,\beta_W}$ and $F_R\in\mathrm{ET}_{\alpha,\beta_R}$ with $\beta_W<\beta_R$ and $\beta_W<-1$. This fills the second row and column of Table \ref{tab:indep}.
\end{example}

\subsection{Log-Weibull-type variables}

In this and the following section we concentrate on radial and angular variables
in the Gumbel domain of attraction. Due to the large variety of distributions in this domain
we consider subsets that include the most commonly used distribution families. We firstly study the case where both $R$ and $W$ are  log-Weibull-tailed; equivalently, 
$\log R$ and $\log W$ are Weibull-tailed. We recall that 
a random variable $Y$ is log-Weibull-tailed if
\begin{equation}\label{eq:logweib}
  \bar{F}_Y(y) = \ell(\log y) (\log y)^\gamma \exp\left(-\alpha (\log y)^{\beta}\right), \qquad \ell \in \RV_{0}^\infty, \gamma \in \mathbb{R}, \alpha,\beta>0, 
\end{equation} 
and we write $F_Y\in \mathrm{LWT}_\beta$. 
The parameter $\beta$ has the predominant influence on the tail decay rate, with  $\beta=1$ if and only if $\bar{F}_Y\in \RV_{-\alpha}^\infty$, while $\beta<1$ gives superheavy-tailed $F_Y$, and $\beta>1$ yields rapid variation of $Y$, i.e., $\bar{F}_Y\in \RV_{-\infty}^\infty$.
In the following, we denote the $\beta$-parameters of $R$ and $W$  by $\beta_R$ and $\beta_W$, respectively. The superheavy-tailed case, $\beta_R<1$ or $\beta_W<1$, is already covered by Section \ref{sec:RSHT}, and the case of regularly varying tails with $\beta_R=1$ or $\beta_W=1$ is treated in Section \ref{sec:RRV}.

We therefore study the remaining case $\beta_R >1$ and $\beta_W >1$, which encompasses important distributions such as the log-Gaussian. 
As in Section \ref{sec:RSHT}, it is more intuitive to consider the random location construction $\log R+\log \W$, where we can apply convolution-based results.
When independent heavy-tailed summands are involved in the convolution, typically only one of the values of summands has a dominant contribution to a high values of the sum, resulting in relatively simple formulas; see Section~\ref{sec:RSHT}.
On the contrary, in the light-tailed set-up all summands may contribute significantly when high values arise in the sum, rendering the tail analysis more intricate. Only relatively few general results on convolutions with tails lighter than exponential are available in the literature. The following lemma will be useful for this and the next section.

\begin{lemma}\label{lem:weib} 
Let $(W_1,W_2)$ be a random vector with $W_j \sim F_W, j=1,2$, such that
both $F_W\in\mathrm{WT}_{\beta_W}$ and $F_{W_\wedge}\in\mathrm{WT}_{\beta_{W_\wedge}}$, with other 
parameters also indexed by $W$ and $W_\wedge$. 
\begin{enumerate}\label{enum3}
	\item \label{chiwlwt} If $\beta_{W_\wedge} =\beta_W$, $\alpha_{W_\wedge}=\alpha_W$, then, provided the limit exists, $\chi_W = \lim_{x\rightarrow \infty} \frac{\ell_{W_\wedge}(x)x^{\gamma_{W_\wedge}}}{\ell_{W}(x)x^{\gamma_{W}}}.$  
	\item If $\beta_{W_\wedge} =\beta_W$, then $\eta_W =\alpha_W/\alpha_{W_\wedge}$.
	\item If $\beta_{W_\wedge}>\beta_W$, then $\log \bar{F}_{W}=o(\log \bar{F}_{W_\wedge})$, and $\eta_W$ is not defined.
\end{enumerate}
\end{lemma}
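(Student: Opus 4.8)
The plan is to reduce all three parts to comparisons of the two survival functions $\bar{F}_W$ and $\bar{F}_{W_\wedge}$ along the diagonal and then substitute the Weibull-tail representations. The starting point is that, since $W_1,W_2$ share the marginal $F_W$, the diagonal joint exceedance probability at the common quantile $t=F_W^{-1}(q)$ is exactly $\P(W_1\ge t,W_2\ge t)=\P(W_\wedge\ge t)=\bar{F}_{W_\wedge}(t)$, while $1-q=\bar{F}_W(t)$ (the generalized-inverse caveat is harmless, as WT distributions have continuous, eventually strictly increasing tails). Feeding this into \eqref{eq:chi} and \eqref{eq:eta} yields the two tail characterizations
\[
\chi_W=\lim_{t\to\infty}\frac{\bar{F}_{W_\wedge}(t)}{\bar{F}_W(t)},\qquad
\eta_W=\lim_{t\to\infty}\frac{\log\bar{F}_W(t)}{\log\bar{F}_{W_\wedge}(t)},
\]
the second obtained by taking logarithms in \eqref{eq:eta} and using that $\log\ell(s)/\log s\to 0$ as $s\to 0$ for slowly varying $\ell$, so the slowly varying factor is asymptotically negligible against the power. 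I would record these two identities first, since everything follows from them.

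For part~\ref{chiwlwt}, substitute $\bar{F}_W(t)\sim \ell_W(t)t^{\gamma_W}\exp(-\alpha_W t^{\beta_W})$ and the analogue for $\bar{F}_{W_\wedge}$ into the ratio defining $\chi_W$. When $\beta_{W_\wedge}=\beta_W$ and $\alpha_{W_\wedge}=\alpha_W$ the exponential factors cancel identically, leaving $\ell_{W_\wedge}(t)t^{\gamma_{W_\wedge}}/\{\ell_W(t)t^{\gamma_W}\}$, whose limit (assumed to exist) is $\chi_W$. For part~2, I pass to logarithms: since the polynomial and slowly varying contributions are of lower order, $\log\bar{F}_W(t)\sim-\alpha_W t^{\beta_W}$ and $\log\bar{F}_{W_\wedge}(t)\sim-\alpha_{W_\wedge}t^{\beta_{W_\wedge}}$. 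With $\beta_{W_\wedge}=\beta_W$ the log-ratio tends to $\alpha_W/\alpha_{W_\wedge}$, giving $\eta_W=\alpha_W/\alpha_{W_\wedge}$; the monotonicity $\bar{F}_{W_\wedge}\le\bar{F}_W$ forces $\alpha_{W_\wedge}\ge\alpha_W$, hence $\eta_W\le 1$, as required.

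Part~3 uses the same log-asymptotics. When $\beta_{W_\wedge}>\beta_W$,
\[
\frac{\log\bar{F}_W(t)}{\log\bar{F}_{W_\wedge}(t)}\sim\frac{\alpha_W}{\alpha_{W_\wedge}}\,t^{\beta_W-\beta_{W_\wedge}}\to 0,
\]
which is precisely the claim $\log\bar{F}_W=o(\log\bar{F}_{W_\wedge})$. Expressed back in terms of $s=1-q$, this means $\bar{F}_{W_\wedge}$ decays faster than any power of $s$, so the Ledford--Tawn form \eqref{eq:eta} with a finite positive exponent and a slowly varying $\ell$ cannot hold, and $\eta_W$ is not defined in this sense.

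I expect the main obstacle to be the rigorous justification of the $\eta_W$ characterization, namely that the slowly varying correction in \eqref{eq:eta} is genuinely negligible after passing to logarithms, and that when writing $\bar{F}_{W_\wedge}$ as a function of $s=\bar{F}_W(t)$ the inversion of the Weibull marginal, giving $t\sim(-\alpha_W^{-1}\log s)^{1/\beta_W}$, composes with the slowly varying and polynomial factors to leave a genuinely slowly varying remainder in $s$. This is where I would be most careful, invoking the representation theorem for slowly varying functions together with the fact that $(-\log s)^{c}$ is slowly varying at $0$; by contrast the exponential cancellation in part~1 and the comparison of leading powers $t^{\beta}$ in parts~2--3 are then routine.
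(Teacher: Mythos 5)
Your proof is correct and follows exactly the route the paper intends: the paper records the diagonal identities $\chi_W=\lim_{x\to\infty}\bar{F}_{W_\wedge}(x)/\bar{F}_W(x)$ and $\eta_W=\lim_{x\to\infty}\log\bar{F}_W(x)/\log\bar{F}_{W_\wedge}(x)$ at the start of its proofs section and then dismisses the lemma in a remark as ``straightforward from the Weibull-tail representation,'' which is precisely the substitution-and-cancellation computation you carry out. Your write-up simply makes explicit the steps the authors leave implicit (cancellation of the exponential factors for $\chi_W$, dominance of $-\alpha\,t^{\beta}$ over the slowly varying and power corrections after taking logarithms for $\eta_W$, and the vanishing log-ratio in part~3), so there is nothing genuinely different in method and no gap.
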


\begin{remark}
The proof of Lemma~\ref{lem:weib} is straightforward from \eqref{eq:logweib}. It also covers the case where $W$ and $W_\wedge$ are
log-Weibull-tailed, since the
tail and residual tail dependence coefficients are invariant under monotonic marginal transformations.
\end{remark}
We consider the set-up where the components $R$, $W$ and $W_\wedge$ are log-Weibull-tailed with the same coefficient $\beta > 1$ and a simplified form of the slowly varying function $\ell$ by assuming that it is asymptotically constant, i.e., $\ell(x) \sim c>0$.

 \begin{prop}[Light-tailed random location with $F_R\in\mathrm{LWT}_\beta$, $\beta>1$]\label{prop:lightS}
Suppose that $F_R, F_W, F_{W_\wedge} \in \mathrm{LWT}_\beta$ with possibly different parameters $\alpha,\gamma$ indexed by the corresponding $R$, $W$ and $W_\wedge$, but where $\beta = \beta_R = \beta_W = \beta_{W_\wedge} > 1$. Assume that the slowly varying functions $\ell$ behave asymptotically like positive constants. 

\begin{enumerate}
 \item If $\chi_W>0$, then $\chi_X=\chi_W>0.$
 \item If $\chi_W=0$, then $\chi_X=0$ and
 \begin{equation}\label{eq:lightS}
\eta_X=\eta_W\times \left(\frac{\alpha_{W_\wedge}^{1/(\beta-1)}+\alpha_R^{1/(\beta-1)}}{\alpha_{W}^{1/(\beta-1)}+\alpha_R^{1/(\beta-1)}}\right)^{\beta-1},
\end{equation}
where $\eta_W = {\alpha_W}/{\alpha_{W_\wedge}}$, and $\eta_X=\eta_W$ if $\alpha_{W}=\alpha_{W_\wedge}$.
\end{enumerate}
 \end{prop}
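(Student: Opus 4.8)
The plan is to pass to the logarithmic (random-location) scale and reduce everything to the tail of a convolution of two independent Weibull-type variables with common shape $\beta>1$. Write $S=\log R$, $T_j=\log W_j$ and $T_\wedge=\log W_\wedge=\min(T_1,T_2)$; since $R$ is independent of $\W$, $S$ is independent of $(T_1,T_2,T_\wedge)$, and by the $\mathrm{LWT}_\beta$ assumptions each of $S$, $T:=T_1$ and $T_\wedge$ lies in $\mathrm{WT}_\beta$ with rates $\alpha_R,\alpha_W,\alpha_{W_\wedge}$ and, under the asymptotically-constant $\ell$ assumption, clean tails $\bar F(t)\sim c\,t^{\gamma}\exp(-\alpha t^\beta)$. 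Because $\chi$ and $\eta$ are invariant under increasing marginal transforms, $\log X_j=S+T_j$ have a common margin, and $\min(\log X_1,\log X_2)=S+T_\wedge$, I can write
\[
\chi_X=\lim_{y\to\infty}\frac{\bar F_{S+T_\wedge}(y)}{\bar F_{S+T}(y)},\qquad \frac{1}{\eta_X}=\lim_{y\to\infty}\frac{\log\bar F_{S+T_\wedge}(y)}{\log\bar F_{S+T}(y)},
\]
so the whole statement follows once the tail of $S+T$ is understood.

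The technical core is a Laplace/saddle-point asymptotic for $\bar F_{S+T}(y)=\int \bar F_S(y-t)\,\d F_T(t)$. For $\beta>1$ both summands contribute to a large sum, and the exponent $\alpha_R(y-t)^\beta+\alpha_W t^\beta$ is convex with interior minimiser $t^\ast=y/(1+\rho)$, $\rho=(\alpha_W/\alpha_R)^{1/(\beta-1)}$, determined by $\alpha_W t^{\beta-1}=\alpha_R(y-t)^{\beta-1}$. Evaluating the minimum gives the exponential order $\exp(-\alpha^\ast_X y^\beta)$ with
\[
\alpha^\ast_X=\bigl(\alpha_R^{-1/(\beta-1)}+\alpha_W^{-1/(\beta-1)}\bigr)^{-(\beta-1)},
\]
while the Gaussian correction at the saddle, together with the factors $c\,(\cdot)^\gamma$ of $S$ and $T$ evaluated at $s^\ast=\rho y/(1+\rho)$ and $t^\ast$, supplies a purely polynomial/constant prefactor. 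The analogous statement holds for $S+T_\wedge$ with $\alpha_W$ replaced by $\alpha_{W_\wedge}$.

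From the leading exponentials, $1/\eta_X=\alpha^\ast_{X_\wedge}/\alpha^\ast_X$, and a short manipulation (factoring $(\alpha_R\alpha_W)^{-1/(\beta-1)}$ out of numerator and denominator and using $\eta_W=\alpha_W/\alpha_{W_\wedge}$ from Lemma~\ref{lem:weib}) rewrites this as the stated \eqref{eq:lightS}; in particular $\eta_X=\eta_W$ when $\alpha_W=\alpha_{W_\wedge}$. For $\chi_X$ I split on $\chi_W$. If $\chi_W>0$, Lemma~\ref{lem:weib} forces $\alpha_{W_\wedge}=\alpha_W$ and (under constant $\ell$) $\gamma_{W_\wedge}=\gamma_W$, so the two convolutions share the same rate $\alpha^\ast_X=\alpha^\ast_{X_\wedge}$, the same saddle $t^\ast$, and the same $S$-factor and Gaussian width; the ratio then collapses to $\lim_x \ell_{W_\wedge}(x)x^{\gamma_{W_\wedge}}/\{\ell_W(x)x^{\gamma_W}\}=\chi_W$, the evaluation at $t^\ast\propto y$ being immaterial by slow variation. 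If $\chi_W=0$, then either $\alpha_{W_\wedge}>\alpha_W$, so that $\alpha^\ast_{X_\wedge}>\alpha^\ast_X$ and the ratio vanishes exponentially, or $\alpha_{W_\wedge}=\alpha_W$ with the prefactor ratio tending to $0$; either way $\chi_X=0$.

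The main obstacle is making the saddle-point asymptotic rigorous: justifying the local Gaussian approximation uniformly in $y$, bounding the contribution of the integral away from $t^\ast$ (where one summand would carry almost all of $y$), and carrying the slowly varying factors through the expansion so that the ratio of prefactors converges to exactly the constant of Lemma~\ref{lem:weib}. This is precisely the regime of convolutions with tails lighter than exponential, where few off-the-shelf results exist, so establishing the convolution tail with control on its prefactor, and not merely on its exponential order, is the step that carries the proof, especially for the delicate equality $\chi_X=\chi_W$ in which the leading exponentials cancel.
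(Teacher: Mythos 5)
Your proposal follows the paper's proof essentially step for step: pass to the logarithmic scale, treat $\log X = S+T$ and $\log X_\wedge = S+T_\wedge$ as convolutions of independent Weibull-type variables with common shape $\beta>1$, extract the exponential rate (your $\alpha^\ast_X=\bigl(\alpha_R^{-1/(\beta-1)}+\alpha_W^{-1/(\beta-1)}\bigr)^{-(\beta-1)}$ is algebraically identical to the paper's $\alpha_X$ in \eqref{eq:weibsum}) together with a polynomial prefactor of order $x^{\gamma_R+\gamma_W+\beta/2}$, and then take the same ratios for $\chi_X$ (splitting on $\chi_W>0$ versus $\chi_W=0$, with the prefactor comparison via Lemma~\ref{lem:weib}) and for $\eta_X=\alpha_X/\alpha_{X_\wedge}$. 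The only difference is that the saddle-point asymptotic you flag as the main obstacle is not derived in the paper either: it is quoted from \citet[Theorems~3.1 and~4.1(iii)]{Asmussen.al.2017}, which supplies exactly the convolution tail with explicit constant, rate and polynomial order, so the off-the-shelf result you believed to be unavailable in this lighter-than-exponential regime in fact exists and is what the paper's proof rests on.
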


\begin{example}[Gaussian factor model]\label{ex:gfm}
Suppose that $\log R$ is univariate standard Gaussian and that $\log \W$ is bivariate standard Gaussian, independent of $R$ and with Gaussian correlation $\rho_W\in(-1,1]$.  Then we have log-Weibull tails with parameters $\beta_R=\beta_W=\beta_{W_\wedge}=2$, $\alpha_R=\alpha_W=1/2$ and $\alpha_{W_\wedge}=1/(1+\rho_W)$ (see Example~\ref{ex:gsm}).  Applying \eqref{eq:lightS} gives $\eta_X=\eta_W\times (3+\rho_W)/(2(1+\rho_W))=(3+\rho_W)/4$.
\end{example}

\begin{example}[Independence model]\label{ex:indep3}
  As in Examples \ref{ex:indep1} and \ref{ex:indep2} we let $R$, $W_1$ and $W_2$ be independent, and we now assume that they are log-Weibull-tailed with equal $\beta$ parameter. 
  By independence, $F_{W_\wedge} \in\mathrm{LWT}_{\beta}$ with $\alpha_{W_\wedge} = 2\alpha_W$.
  Proposition~\ref{prop:lightS} gives $\chi_X=0$ with $\eta_X$ calculated by formula \eqref{eq:lightS}.
\end{example}

\subsection{Weibull-type variables}

We now consider the case where $R$ and $W$ follow a Weibull-type distribution, a rich class in the Gumbel MDA. 
Recall that a variable $Y$ is of Weibull-type, $F_Y \in \mathrm{WT}_\beta$, if
\begin{equation}\label{eq:weib}
  \bar{F}_Y(y) = \ell(y) y^\gamma \exp\left(-\alpha y^{\beta}\right), \qquad \ell \in \RV_{0}^\infty, \gamma \in \mathbb{R}, \alpha,\beta>0.
\end{equation} 
Well-known examples of Weibull-tailed distributions are the Gaussian with $\beta=2$, the gamma with $\beta=1$ or, more generally, the Weibull where $\beta$ is called the Weibull index.

For developing useful results, we further assume that, in addition to $R$ and $W$, $W_\wedge$ also has a Weibull-type tail. As previously, we index the corresponding 
$\ell$ functions and the parameters $\alpha, \gamma$ in \eqref{eq:weib} by the variable name.
We also recall Lemma \ref{enum3} concerning the dependence coefficients of the vector $\W$.

\begin{prop}[Weibull-type variables]\label{prop:weib}
Suppose that $F_R \in \mathrm{WT}_{\beta_{R}}$, $F_W\in \mathrm{WT}_{\beta_{W}}$ and $F_{W_\wedge}\in \mathrm{WT}_{\beta_{W_\wedge}}$. We have the following hierarchy of dependence structures: 

\begin{enumerate}
 \item \label{8item1}
  If $\beta_{W_\wedge} = \beta_W$, $\alpha_{W_\wedge} = \alpha_W$, $\gamma_{W_\wedge} = \gamma_W$, then $\chi_X = \chi_W = \lim_{x\rightarrow\infty}\ell_{W_\wedge}(x) / \ell_{{W}}(x)$,
   if the limit exists, and $\eta_X = \eta_W=1$.
\item \label{8item2}
  If $\beta_{W_\wedge} = \beta_W$, $\alpha_{W_\wedge} = \alpha_W$, $\gamma_{W_\wedge} < \gamma_W$, 
 then $\chi_X=0$ and $\eta_X = \eta_W=1$.
\item \label{8item3}
  If $\beta_{W_\wedge} = \beta_W$, $\alpha_{W_\wedge} > \alpha_W$, then $\chi_X=0$ and 
  \begin{equation*}\eta_X = \eta_W^{\beta_R/(\beta_R + \beta_W)} = \left(\alpha_W / \alpha_{W_\wedge}\right)^{\beta_R/(\beta_R + \beta_W)}.
 \end{equation*}
\item \label{8item4}
  If $\beta_{W_\wedge} > \beta_W$, then $\chi_X=0$ and
  $\eta_X = \eta_W = 0$.
\end{enumerate}
\end{prop}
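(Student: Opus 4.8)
The plan is to reduce both coefficients to a comparison of the upper tails of $X:=RW_1$ and $X_\wedge:=R\,W_\wedge$, and then to obtain these tails from the asymptotics of products of independent Weibull-type variables. Since $W_1\stackrel{d}{=}W_2$ and $R>0$, the margins of $\X$ are identical, $X_1\stackrel{d}{=}X_2\sim F_X$, and $\min(RW_1,RW_2)=R\,W_\wedge$, so $\P(X_1\ge u,X_2\ge u)=\bar F_{X_\wedge}(u)$. Writing $u=F_X^{-1}(q)$ and $1-q\sim\bar F_X(u)$ as $u\to\infty$, definitions~\eqref{eq:chi} and~\eqref{eq:eta} give
\begin{equation*}
 \chi_X=\lim_{u\to\infty}\frac{\bar F_{X_\wedge}(u)}{\bar F_X(u)},\qquad
 \eta_X=\lim_{u\to\infty}\frac{\log\bar F_X(u)}{\log\bar F_{X_\wedge}(u)},
\end{equation*}
the second identity holding because the slowly varying factor in~\eqref{eq:eta} contributes $o(\log\bar F_X(u))$. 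Everything thus reduces to the Weibull-type tails of $X$ and $X_\wedge$.

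The central ingredient is a product lemma, which I would establish in the spirit of the Weibull-tail convolution arguments used for Proposition~\ref{prop:lightS}: if $U\ci V$ with $U\in\mathrm{WT}_{\beta_U}$ and $V\in\mathrm{WT}_{\beta_V}$, then $UV\in\mathrm{WT}_{\beta_{UV}}$ with $1/\beta_{UV}=1/\beta_U+1/\beta_V$ and leading exponential coefficient
\begin{equation*}
 \alpha_{UV}=\alpha_U^{\frac{\beta_V}{\beta_U+\beta_V}}\alpha_V^{\frac{\beta_U}{\beta_U+\beta_V}}\Bigl[(\beta_V/\beta_U)^{\frac{\beta_U}{\beta_U+\beta_V}}+(\beta_U/\beta_V)^{\frac{\beta_V}{\beta_U+\beta_V}}\Bigr].
\end{equation*}
The proof is a Laplace/saddle-point analysis of $\bar F_{UV}(x)=\int_0^\infty\bar F_U(x/v)\,\mathrm{d}F_V(v)$: the integrand concentrates near $v^\star\propto x^{\beta_U/(\beta_U+\beta_V)}$, where the two exponential contributions balance, yielding the stated $\beta_{UV},\alpha_{UV}$ together with an explicit power/slowly-varying prefactor whose logarithm is linear in those of $U$ and $V$. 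Applying this with $(U,V)=(R,W)$ and $(U,V)=(R,W_\wedge)$ gives the tails of $X$ and $X_\wedge$; in particular $\beta_X=\beta_{X_\wedge}$ exactly when $\beta_W=\beta_{W_\wedge}$.

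It then remains to insert these tails into the two displayed limits, case by case. In items~\eqref{8item1}--\eqref{8item2} ($\beta_{W_\wedge}=\beta_W$, $\alpha_{W_\wedge}=\alpha_W$) the leading exponentials of $X$ and $X_\wedge$ coincide, so $\beta_X=\beta_{X_\wedge}$, $\alpha_X=\alpha_{X_\wedge}$ and hence $\eta_X=1$; the ratio $\bar F_{X_\wedge}/\bar F_X$ reduces to the ratio of prefactors, equal to $\lim\ell_{W_\wedge}/\ell_W=\chi_W$ when $\gamma_{W_\wedge}=\gamma_W$ (item~\eqref{8item1}, via Lemma~\ref{lem:weib}\eqref{chiwlwt}) and tending to $0$ when $\gamma_{W_\wedge}<\gamma_W$ (item~\eqref{8item2}). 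In item~\eqref{8item3} ($\beta_{W_\wedge}=\beta_W$, $\alpha_{W_\wedge}>\alpha_W$) one has $\beta_X=\beta_{X_\wedge}$ but $\alpha_{X_\wedge}>\alpha_X$, whence $\chi_X=0$ and, the bracketed factor and the $\alpha_R$-power being identical for $X$ and $X_\wedge$, $\eta_X=\alpha_X/\alpha_{X_\wedge}=(\alpha_W/\alpha_{W_\wedge})^{\beta_R/(\beta_R+\beta_W)}=\eta_W^{\beta_R/(\beta_R+\beta_W)}$. In item~\eqref{8item4} ($\beta_{W_\wedge}>\beta_W$) we get $\beta_{X_\wedge}>\beta_X$, so $\chi_X=0$ and $\log\bar F_X(u)/\log\bar F_{X_\wedge}(u)\sim(\alpha_X/\alpha_{X_\wedge})\,u^{\beta_X-\beta_{X_\wedge}}\to0$, giving $\eta_X=0$. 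The main obstacle is the product lemma itself: the saddle-point exponent is routine, but tracking the prefactor rigorously — with enough precision to deliver the exact constant $\chi_W$ in item~\eqref{8item1} and the polynomial-gap vanishing in item~\eqref{8item2} — requires uniform control of the Laplace approximation near $v^\star$, which is the technically delicate part.
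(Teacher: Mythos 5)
Your proposal is correct and takes essentially the same route as the paper: reduce $\chi_X$ and $\eta_X$ to tail comparisons of $X=RW$ and $X_\wedge=RW_\wedge$, invoke a product-tail result for independent Weibull-type variables (your $\beta_{UV}$ and $\alpha_{UV}$ coincide exactly with the exponent $\beta_1\beta_2/(\beta_1+\beta_2)$ and constant $C_2$ in the paper's Lemma~\ref{lem:weibprod}), and then run the identical four-case comparison of the resulting $(\beta,\alpha,\gamma)$ parameters of $X$ and $X_\wedge$, including the cancellation that yields $\eta_X=(\alpha_W/\alpha_{W_\wedge})^{\beta_R/(\beta_R+\beta_W)}$ in case~3. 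The only real difference is sourcing: the paper imports the product lemma from \citet{Debicki.al.2018} rather than proving it, so the Laplace/saddle-point derivation you rightly flag as the delicate step (uniform control of the prefactor near $v^\star$) is exactly the part the paper outsources to the literature.
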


In all of the cases encompassed by Proposition \ref{prop:weib}, $(X_1,X_2)$ and $(W_1,W_2)$ have the same tail dependence coefficient $\chi$, which can be positive only in case 1. In all other cases the variables are asymptotically independent, and only in case 3~the residual tail dependence coefficient $\eta$ changes under the multiplication of the radial variable $R$. Since $\beta_R/(\beta_R + \beta_W) \in (0,1)$, this always leads to an increase in dependence, that is, $\eta_X > \eta_W$.

\begin{example}[Gaussian scale mixtures]\label{ex:gsm}
  To illustrate the most interesting case 3~in Proposition~\ref{prop:weib} we consider 
  $(W_1,W_2)$ following a bivariate Gaussian distribution with standardized margins and correlation $\rho_W$.
  We have that 
  \begin{align*}
    \bar{F}_W(x)&\sim r_W(x) \exp(- x^2/2),&
    \bar{F}_{W_\wedge}(x) &\sim r_{W_\wedge}(x) \exp\{ - x^2/ (1+\rho_W) \},
  \end{align*}
  where the tail distribution of the minimum follows from bounds on the multivariate Mills ratio \citep[e.g.][]{HashorvaHusler2003}, and $r_W$ and $r_{W_\wedge}$ are regularly varying functions. Therefore, $\eta_W = (1+\rho_W)/2$ and Proposition \ref{prop:weib}  confirms  \citet[][Theorem~2]{Huseretal2017} where 
  $$\eta_X = \eta_W^{\beta_R/(\beta_R + 2)} = \left\{(1+\rho_W)/2\right\}^{\beta_R/(\beta_R + 2)}.$$
\end{example}

\begin{example}[Independence model]
  We continue the example where $R$, $W_1$ and $W_2$ are independent, and they are now assumed
  to be Weibull-tailed. The variable $W_\wedge$ is also Weibull-tailed with $\beta_{W_\wedge} = \beta_W$ and $\alpha_{W_\wedge} = 2\alpha_W$. Therefore, the third part of Proposition~\ref{prop:weib} entails that $\eta_X = 2^{-\beta_R/(\beta_R +\beta_W)}$. This expression tends to $1/2$ if $\beta_R/\beta_W \to \infty$ such that the tail of $W$ dominates strongly with respect to that of $R$; if $\beta_R/\beta_W \to 0$, then $\eta_X \to 1$.   
\end{example}

\subsection{Variables in the negative Weibull domain of attraction}
\label{sec:RWnegwei}

The remaining cases are those where $R$, $W$ or both of them are in the negative Weibull MDA, with finite upper endpoint. Recall that a variable $Y$ is 
in the negative Weibull MDA if
\begin{align}\label{eq:nwei}
 \bar{F}_Y(y^\star - s) =\ell(s) s^{\alpha}, \quad \ell \in \RV_0^0, \alpha>0.
\end{align}

The case where $R$ is superheavy-tailed or regularly varying and 
$W$ satisfies \eqref{eq:nwei} has been covered in part~1 of Proposition~\ref{prop:RSHT} and Proposition~\ref{prop:Frechet_chi}, respectively. On the other hand, the case where the tail of $R$ satisfies \eqref{eq:nwei} and $W$ is superheavy-tailed or regularly varying is treated by part 2 of Proposition~\ref{prop:RSHT} and Proposition~\ref{prop:WRV}, respectively. It remains to study the situation where
one of $R$ or $W$ is of form \eqref{eq:nwei}, and the other is in the Gumbel domain of attraction as defined in \eqref{eq:GumbelMDA}. In this section we focus
on the case where $W_\wedge$ has the same upper endpoint as $W$; for a more detailed study where $W_\wedge$ can have a smaller upper endpoint, and $W$ may have a point mass on its upper endpoint, see Section~\ref{sec:Constrained}.

\begin{prop}[Variables in the negative Weibull MDA]\label{prop:negwei}
\begin{enumerate}
  \item \label{item9.1}
    Suppose that $R$ is in the Gumbel MDA with upper endpoint $r^\star\in (0,\infty]$ and that
    $W$ and $W_\wedge$ satisfy \eqref{eq:nwei} with parameters $\alpha_W$ and $\alpha_{W_\wedge}$, respectively. Then $\chi_X = \chi_W$ and $\eta_X = 1$.
  \item \label{item9.2}
    Let $R$ satisfy \eqref{eq:nwei} and let $W$ and $W_\wedge$ be in the Gumbel MDA with equal upper endpoint $w^\star\in(0,\infty]$ and auxiliary functions $b_W$ and $b_{W_\wedge}$, such that $\lim_{x\to w^\star} b_W(x) / b_{W_\wedge}(x)$ exists. Then $\chi_X = \chi_W$ and $\eta_X = \eta_W$. 
  \item \label{item9.3}
    Let $R$, $W$ and $W_\wedge$ all satisfy \eqref{eq:nwei} with endpoints $r^\star, w^\star, w^\star$ and parameters $\alpha_R, \alpha_W$ and $\alpha_{W_\wedge}$, respectively. If $\alpha_{W_\wedge} = \alpha_W$ then $\chi_X = \chi_W$ and $\eta_X = 1$.
    If $\alpha_{W_\wedge} > \alpha_W$ then $\chi_X = \chi_W = 0$ and 
\begin{equation}\label{eq:etanegweib}
\eta_X = (\alpha_W + \alpha_R)/(\alpha_{W_\wedge} + \alpha_R) > \alpha_{W} / \alpha_{W_\wedge} = \eta_W.
\end{equation}
\end{enumerate} 
\end{prop}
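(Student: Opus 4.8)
The plan is to reduce all three parts to the tail behaviour of the two products $RW$ and $R W_\wedge$ near their common upper endpoint, call it $x^\star$, and then read off the coefficients. The starting observation is that, since $R>0$, we have $X_\wedge = X_1\wedge X_2 = R(W_1\wedge W_2) = R W_\wedge$, while $X_1\stackrel{d}{=}X_2\stackrel{d}{=}RW$ because $W_1\stackrel{d}{=}W_2$. Substituting $u=F_{X_1}^{-1}(q)$ into \eqref{eq:chi} and \eqref{eq:eta} and using $\P(X_1\ge u,X_2\ge u)=\bar{F}_{X_\wedge}(u)$, the coefficients reduce to
\begin{align*}
\chi_X=\lim_{u\to x^\star}\frac{\bar{F}_{R W_\wedge}(u)}{\bar{F}_{RW}(u)},\qquad \eta_X=\lim_{u\to x^\star}\frac{\log\bar{F}_{RW}(u)}{\log\bar{F}_{R W_\wedge}(u)},
\end{align*}
the second being used whenever it exists. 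Because $R$ is common to both products, these ratios isolate the difference between $W$ and $W_\wedge$, and the whole proposition follows once the product tails are understood.

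For part \eqref{item9.3}, set $U=r^\star-R$ and $V=w^\star-W$, so that \eqref{eq:nwei} says $F_U,F_V$ are regularly varying at $0$ with indices $\alpha_R,\alpha_W$. Since $x^\star-RW=w^\star U+r^\star V-UV$ and the cross term is negligible on the scale where $w^\star U+r^\star V=O(s)$, I would prove $\bar{F}_{RW}(x^\star-\cdot)\in\RV_{\alpha_R+\alpha_W}^0$ by a Karamata--Tauberian argument: the Laplace transforms factor as $\E[e^{-\lambda(w^\star U+r^\star V)}]=\E[e^{-\lambda w^\star U}]\,\E[e^{-\lambda r^\star V}]$, each factor regularly varying at $\infty$ with index $-\alpha_R,-\alpha_W$, so the product has index $-(\alpha_R+\alpha_W)$; inverting yields slowly varying part $\sim\ell_R\ell_W$ times the Beta--Gamma constant $\Gamma(\alpha_R+1)\Gamma(\alpha_W+1)/\Gamma(\alpha_R+\alpha_W+1)$. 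The identical computation for $R W_\wedge$ gives index $\alpha_R+\alpha_{W_\wedge}$ with the same constant when $\alpha_{W_\wedge}=\alpha_W$. Feeding these into the displayed ratios gives: for $\alpha_{W_\wedge}=\alpha_W$, equal indices force $\eta_X=1$ and the constants cancel so $\chi_X=\lim\ell_{W_\wedge}/\ell_W=\chi_W$; for $\alpha_{W_\wedge}>\alpha_W$ the numerator carries an extra power of $s$, so $\chi_X=\chi_W=0$, and the log-ratio gives $\eta_X=(\alpha_R+\alpha_W)/(\alpha_R+\alpha_{W_\wedge})$, which exceeds $\eta_W=\alpha_W/\alpha_{W_\wedge}$ by a direct calculation, as in \eqref{eq:etanegweib}.

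Parts \eqref{item9.1} and \eqref{item9.2} both follow from a single product-tail asymptotic: if $G$ is in the Gumbel MDA with auxiliary function $b$, and $H\ge 0$ is independent with finite endpoint $h^\star$ and $\bar{F}_H(h^\star-\cdot)\in\RV_{\alpha_H}^0$, then a Laplace-type evaluation of $\bar{F}_{GH}(x)=\int\bar{F}_G(x/w)\,\d F_H(w)$, localising at $w=h^\star$ via $\bar{F}_G(m(1+s/h^\star))\approx\bar{F}_G(m)\exp(-m\,b(m)\,s/h^\star)$, gives
\begin{align*}
\bar{F}_{GH}(x)\sim\Gamma(\alpha_H+1)\,\bar{F}_G(x/h^\star)\,\bar{F}_H\!\big(h^\star-\delta(x)\big),\qquad \delta(x)=\tfrac{h^\star}{(x/h^\star)\,b(x/h^\star)}\to 0.
\end{align*}
In part \eqref{item9.1} take $G=R$ (the common base) and $H=W$ or $W_\wedge$: since $\log\bar{F}_R(x/w^\star)$ dominates the slowly decaying $\log\bar{F}_W(w^\star-\delta)\sim\alpha_W\log\delta$, we get $\log\bar{F}_{RW}\sim\log\bar{F}_R(x/w^\star)\sim\log\bar{F}_{R W_\wedge}$, hence $\eta_X=1$, while the tail ratio collapses to $\bar{F}_{W_\wedge}(w^\star-\delta)/\bar{F}_W(w^\star-\delta)\to\chi_W$ (the $\Gamma$-constants cancel whenever $\chi_W>0$, as then $\alpha_{W_\wedge}=\alpha_W$). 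In part \eqref{item9.2} the roles swap, with $G=W$ or $W_\wedge$ (base) and $H=R$: then $\log\bar{F}_{RW}\sim\log\bar{F}_W(x/r^\star)$ gives $\eta_X=\lim\log\bar{F}_W/\log\bar{F}_{W_\wedge}=\eta_W$, and the tail ratio factors as $[\bar{F}_{W_\wedge}(x/r^\star)/\bar{F}_W(x/r^\star)]\cdot[\bar{F}_R(r^\star-\delta_{W_\wedge})/\bar{F}_R(r^\star-\delta_W)]$, the first factor tending to $\chi_W$ and the second to $(\lim b_W/b_{W_\wedge})^{\alpha_R}$ by regular variation of $\bar{F}_R(r^\star-\cdot)$ — this is exactly where the hypothesis that $\lim_{x\to w^\star}b_W/b_{W_\wedge}$ exists is used. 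This limit equals $1$ whenever $\chi_W>0$ (proportional tails force $b_W\sim b_{W_\wedge}$) and is irrelevant when $\chi_W=0$, so $\chi_X=\chi_W$ in all cases.

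The genuinely technical step, and the main obstacle, is making the product-tail asymptotic of the previous paragraph rigorous. This needs (i) the uniform-on-compacts convergence in the Gumbel MDA together with a dominating bound such as \eqref{eq:DR} (or Potter bounds for the regularly varying factor) to show that contributions of $w$ bounded away from the endpoint are negligible and to justify exchanging limit and integral; (ii) replacing the slowly varying functions $\ell_W,\ell_R$ by their value at the effective scale $\delta(x)$, again via uniform convergence; and (iii) a separate treatment of infinite versus finite endpoint for the base variable (e.g.\ $r^\star=\infty$ against $r^\star<\infty$ in part \eqref{item9.1}), where in the finite-endpoint case the base level $m=x/w^\star\to r^\star$ and $b(m)\to\infty$. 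Analogous care is needed for the cross-term estimate and the Tauberian inversion in part \eqref{item9.3}. Once these asymptotics are secured, the evaluations of $\chi_X$ and $\eta_X$ are the short ratio computations above.
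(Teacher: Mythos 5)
Your proposal is correct and takes essentially the same route as the paper: the published proof likewise reduces $\chi_X$ and $\eta_X$ to the ratios of the product tails $\bar{F}_{RW}$ and $\bar{F}_{RW_\wedge}$ and reads off all three parts from exactly the two product-tail asymptotics you state — these appear there as Lemma~\ref{lem:HashorvaThm3} (with Remark~\ref{rmk:diffendpoint} for the endpoint rescaling), cited from \citet{Hashorvaetal2010} rather than re-derived, so the ``main obstacle'' you flag is dispatched by citation rather than by your Laplace/Tauberian sketch. Your part~\eqref{item9.2} bookkeeping also matches the paper's: there the implication $\chi_W>0\Rightarrow b_W\sim b_{W_\wedge}$ is proved via the integral representation $\bar{F}_{W_\wedge}(x)/\bar{F}_W(x)\asymp\exp\{-\int_z^x b_{W_\wedge}(t)[1-b_W(t)/b_{W_\wedge}(t)]\,\d t\}$ (which, note, also yields $\lim b_W/b_{W_\wedge}\leq 1$, guaranteeing your second factor $(\lim b_W/b_{W_\wedge})^{\alpha_R}$ stays bounded in the $\chi_W=0$ case).
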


\begin{example}[Independence model]
  Continuing the running independence example, we now suppose that $F_R, F_W$ satisfy~\eqref{eq:nwei} with parameters $\alpha_R,\alpha_W$. Clearly, $F_{W_\wedge}$ also satisfies~\eqref{eq:nwei}, with parameter $\alpha_{W_\wedge} = 2\alpha_W$.  
  The third part of Proposition~\ref{prop:negwei} shows that 
  $$\eta_X = (\alpha_W + \alpha_R)/(2\alpha_{W} + \alpha_R) \in (1/2, 1),$$
  hence by varying the parameters $\alpha_R, \alpha_W > 0$ we can attain the whole
  range of residual tail dependence coefficients related to positive association.
\end{example}

\section{Literature review and examples}
\label{sec:Examples}

Here we present an overview of related literature, detailing how existing examples and results fit into the framework of this paper.

\paragraph{Elliptical copulas} Let $\Sigma$ be a positive-definite covariance matrix with Cholesky decomposition $\Sigma=AA^T$, and $(U_1,U_2)$ be uniformly distributed on the $L_2$ sphere $\{(w_1,w_2):(w_1,w_2)^T (w_1,w_2)=1\}$. Then 
$\X = R A(U_1,U_2)^T$
has an elliptical distribution for any $R> 0$ called the \emph{generator}. Therefore $\W^T = A(U_1,U_2)^T$ lies on the Mahalanobis sphere $\mathcal{W} = \{(w_1,w_2):(w_1,w_2)^T\Sigma^{-1}(w_1,w_2)=1\}$, and the extremal dependence in the upper right orthant is unchanged by taking $(W_j)_+ = \max(W_j,0)$. It is well known that $\X$ is asymptotically dependent if and only if $R$ is in the Fr\'{e}chet MDA \citep[][Theorem 4.3]{HultLindskog02}. In that case, the tail dependence coefficient $\chi_X$ is given by~\eqref{eq:Rrvchi}, with $W_j$ replaced by $(W_j)_+$; see also \citet{Opitz2013}. For $R$ in the Gumbel MDA, the scaling condition on $\nu$ such that $\tau(w) \in[0,1]$ yields $\Sigma$ with diagonal elements 1, off-diagonal elements $\rho\in(-1,1)$, and residual tail dependence coefficient is given by  Proposition~\ref{prop:RGumbel}\eqref{RGumbelitem1} with $\zeta=\tau(1/2) = \{(1+\rho)/2\}^{1/2}$. \citet{Hashorva2010} details calculation of $\eta_X$ assuming $R$ to be in the Gumbel MDA, providing an alternative perspective on the derivation. The spatial model of \citet{Huseretal2017} is also covered by this case.

\begin{example*}[Gaussian]
The Gaussian distribution arises when $\bar{F}_R(r) = e^{-r^2/2}$, so that by Corollary~\ref{cor:RWeib}, $\eta_X = \zeta^2 = (1+\rho)/2$.
\end{example*}

\paragraph{Archimedean and Liouville copulas} Archimedean (respectively Liouville) copulas arise as the survival copula when $\W$ is uniformly (respectively Dirichlet) distributed on the positive part of the $L_1$ sphere $\mathcal{W}=\{(w_1,w_2) \in [0,1]^2 : w_1+w_2=1\}$, and $R> 0$. That is, $\X = R\W$ has an \emph{inverted} Archimedean or Liouville copula, whilst the Archimedean or Liouville copula itself is that of $(t(X_1),t(X_2))$, for a monotonic decreasing transformation $t$. By taking $t(x) = 1/x$, we obtain $1/\X = (\tilde{X_1},\tilde{X_2}) = \tilde{R}(\tilde{W_1},\tilde{W_2})$, so Archimedean copulas have a random scale representation with $(\tilde{W_1},\tilde{W_2})$ constrained by functional dependence that is not represented by a norm.

Archimedean copulas are typically defined in terms of a non-increasing continuous generator function $\psi:[0,\infty)\to[0,1]$, such that $C(u_1,u_2) = \psi(\psi^{-1}(u_1)+\psi^{-1}(u_2))$. The link between $\psi$ and the variable $R\sim F_R$ is given in equation~(3.3) of \citet{McNeilNeslehova09}; for $d=2$ this is 
\begin{align}
\bar{F}_R(r) = \psi(r) -r \psi'(r_+), \qquad r >0, \label{eq:ArchR}
\end{align}
where $\psi'(r_+)$ denotes the right-hand derivative of $\psi$.

Archimedean copulas are a special case of Liouville copulas, whose dependence properties are studied in \citet{BelzileNeslehova17}. For $\X$, their Theorem~1 states that $R$ in the Fr\'echet MDA leads to asymptotic dependence, whilst the Gumbel and negative Weibull MDAs lead to asymptotic independence. The exponent function given in their Theorem~1 matches equation~\eqref{eq:V}. In their Theorem~2, \citet{BelzileNeslehova17} consider the extremal dependence properties of $1/\X = \tilde{R}(\tilde{W_1},\tilde{W_2})$, i.e., the Liouville copula itself. Since the reciprocal of Dirichlet random variables have regularly varying tails, this links with Proposition~\ref{prop:WRV} which states that asymptotic independence arises if $(\tilde{W_1},\tilde{W_2})$ themselves are asymptotically independent and heavier-tailed than $R$. Proposition~\ref{prop:samealpha}\eqref{item:prop6wminlighter} 
is relevant if $\tilde{R}$ and $\tilde{W}$ are regularly varying with the same index.

\begin{example*}[Gumbel and inverted Gumbel copulas]
The Gumbel, or logistic, Archimedean copula arises when $\psi(x) = e^{-x^{\theta}}$, $\theta \in (0,1]$. By~\eqref{eq:ArchR}, $\bar{F}_R(r) = e^{-r^{\theta}}(1+\theta r^{\theta}) \in \mathrm{WT}_\theta$.
The copula of $\X$ is the asymptotically independent inverted Gumbel copula \citep{LedfordTawn1997}. We have $\zeta = \tau(1/2) = 1/2$ and so $\eta_X = 2^{-\theta}$ by Corollary~\ref{cor:RWeib}.
The Gumbel copula is that of $1/\X = \tilde{R}(\tilde{W_1},\tilde{W_2})$, with $F_{\tilde{R}}(r) = e^{-r^{-\theta}}(1+\theta r^{-\theta})$, so $\bar{F}_{\tilde{R}} \in \RV_{-\theta}^\infty$. The dependence structure follows from Proposition~\ref{prop:Frechet_chi} for $\theta<1$ since $\E(\tilde{W}^\theta)<\infty$. Noting that $\tilde{W}_{\wedge}$ is a bounded random variable, $\chi_X=0$ for $\theta=1$, as given by Proposition~\ref{prop:samealpha}\eqref{item:prop6wminlighter}. In fact, for $\theta=1$, the copula is the independence copula.
\end{example*}

\paragraph{Archimax copulas} Bivariate Archimax copulas were introduced by \citet{CaperaaEtAl2000} and extended to the multivariate case with a stochastic representation by \citet{CharpentierEtAl2014}. They are so-called because of a connection to both Archimedean and extreme-value max-stable copulas. Letting $\psi$ be the generator of an Archimedean copula, and $V$ the exponent function defined in~\eqref{eq:expfn}, a bivariate Archimax copula can be expressed as $C(u_1,u_2) = \psi \circ V(1/\psi^{-1}(u_1),1/\psi^{-1}(u_2))$, such that taking $V(x_1,x_2) = 1/x_2+1/x_2$ --- corresponding to the independence max-stable copula --- yields an Archimedean copula, whilst taking $\psi(x)=e^{-x}$ yields a max-stable copula. \citet{CharpentierEtAl2014} show that the vector $(X_1,X_2) = R(W_1,W_2)$ has an inverted Archimax copula if $\bar{F}_R$ is as in~\eqref{eq:ArchR}, and $\P(W_1\geq w_1,W_2 \geq w_2) = \max(0,1-V(1/w_1,1/w_2))$. Hence $(\tilde{X}_1,\tilde{X}_2)=1/(X_1,X_2) = \tilde{R}(\tilde{W}_1,\tilde{W}_2)$ has an Archimax copula. We have $\bar{F}_{\tilde{W}}(w) \in \RV_{-1}^\infty$ and $\chi_{\tilde{W}}=2-V(1,1)$ which is positive unless $V(x_1,x_2) = 1/x_2+1/x_2$. If $\tilde{R}$ has a lighter tail then Proposition~\ref{prop:WRV} gives $\chi_{\tilde{X}}=\chi_{\tilde{W}}$, whilst if $\tilde{R}$ is the same or heavier, the results of Propositions~\ref{prop:Frechet_chi},~\ref{prop:RSHT} or~\ref{prop:samealpha} are relevant. The inverted case follows similarly to the calculations in Section~\ref{sec:Constrained} since the margins of $W_1,W_2$ are uniform and the zero-truncation in the copula for $(W_1,W_2)$ means that there is mass on $\{w \in [0,1]: V(1/w,1/(1-w))=1\}$ whenever $V(1/w_1,1/w_2)>1$, where $V(1/x_1,1/x_2)$ defines a norm.

\begin{example*}[Gumbel Archimax]
Taking $V(x_1,x_2)=(x_1^{-1/\theta}+x_2^{-1/\theta})^\theta$, the exponent function of the logistic, then the corresponding Archimax copula is $C(u_1,u_2) = \psi\{(\psi^{-1}(u_1)^{1/\theta}+\psi^{-1}(u_2)^{1/\theta})^\theta\}$, which is Archimedean with generator $\psi(x^\theta)$ \citep{CharpentierEtAl2014}. If $\psi(x)=e^{-x^\alpha}, \alpha \in (0,1]$, then we obtain the Gumbel copula with parameter $\theta\alpha$. Tail dependence results can then be obtained as in the example above, or considering the Archimax structure. Following the latter, for $\tilde{X}$ Proposition~\ref{prop:Frechet_chi} gives $\chi_{\tilde{X}} = 2-V(1,1)^\alpha = 2-2^{\theta\alpha}$, for $\alpha \in (0,1)$ whilst Proposition~\ref{prop:samealpha}\eqref{item:6a} gives the extension to $\alpha=1$. For the inverted copula $\eta_{X} = (2^{-\theta})^\alpha$, following similar lines to Proposition~\ref{prop:RGumbel}.
\end{example*}

\paragraph{Multivariate ($\rho$-)Pareto copulas} Let $\rho:(0,\infty)^2\to(0,\infty)$ be a positive homogeneous function. Multivariate $\rho$-Pareto copulas arise when $\bar{F}_R(r)=r^{-1}$, i.e.\ standard Pareto, and the random vector $(W_1,W_2)$ is concentrated on $\mathcal{W} = \{(w_1,w_2) \in \mathbb{R}_+^2: \rho(w_1,w_2) = 1\}$ with marginals satisfying $\E(W) < \infty$ \citep{DombryRibatet15}. The case of $\rho(x_1,x_2) = \max(x_1,x_2)$ leads to the multivariate Pareto copula associated to multivariate generalized Pareto distributions \citep{RootzenTajvidi2006,FerreiradeHaan2014,Rootzenetal2017}. Such copulas are asymptotically dependent, except for the case outlined in Section~\ref{sec:Frechet}, with $\chi_X$ given by~\eqref{eq:Rrvchi}. Although we have focused on norms and $\rho$ need not be convex, there is nothing in Proposition~\ref{prop:Frechet_chi} requiring this. 

\begin{example*}[Bivariate Pareto copula associated to the Gumbel copula]
Since the Gumbel copula is a max-stable distribution, it has an associated Pareto copula.   If $Z$ has density $f_Z(z) = h(z)\max(z,1-z)2^{1-\theta}$, where $h$ is given by~\eqref{eq:hlogistic}, then $\X = R\W = R(Z,1-Z)/\max(Z,1-Z)$ with $\bar{F}_{R}(r)=r^{-1}$ leads to the associated bivariate Pareto copula. The distribution function of $\X$ is
\[
\P(X_1 \leq x_1, X_2 \leq x_2) = \left\{V(\min(x_1,1),\min(x_2,1)) - V(x_1,x_2)\right\} / V(1,1),
\]
where $V(x_1,x_2) = (x_1^{-1/\theta} + x_2^{-1/\theta})^\theta$ is the exponent function for the Gumbel distribution. 
\end{example*}

\paragraph{Model of \citet{deh2011}} 
They describe the losses of two banks by a factor model $S_j = C + L_j$, $j=1,2$, where $\bar{F}_{L_j} \in \RV_{-\alpha}^\infty$ and $\bar{F}_C \in \RV_{-\beta}^\infty$. For $\beta<\alpha$ they show that
$S=(S_1,S_2)$ is completely asymptotically dependent, i.e., $\chi_S = 1$, and for $\beta=\alpha$ they obtain $\chi_S \in(0,1)$. If $\alpha < \beta  < 2\alpha$ asymptotic independence arises with $\eta_S = \alpha/\beta$, and if $\beta > 2\alpha$ then $\eta_S = 1/2$. Our proposition 
\ref{prop:RSHT} yields the same results as special cases with $R = \exp(C)$ and $W_j = \exp(L_j)$, $j=1,2$.

\paragraph{Model of \citet{Wadsworthetal2017}} They consider the copula induced by taking $R$ to be generalized Pareto, $\bar{F}_R(r) = (1+\lambda r)^{-1/\lambda}_+$, and $\mathcal{W} = \{(w_1,w_2) \in [0,1]^2: \|(w_1,w_2)\|_* = 1\}$ where $\|\cdot\|_*$ is a symmetric norm subject to certain restrictions. These restrictions mean that $\lambda\leq 0$ corresponds to asymptotic independence; the residual tail dependence coefficient $\eta_X$ is as given in Proposition~\ref{prop:RGumbel} for $\lambda=0$ with $\zeta=\tau(1/2) = \|(1,1)\|_*^{-1}$, and Proposition~\ref{prop:RNegWeib} for $\lambda<0$. We note that if the norm $\nu$ has certain shapes that were excluded in \citet{Wadsworthetal2017}, asymptotic dependence is possible for $\lambda\leq 0$. When $R$ is in the Fr\'echet MDA $(\lambda>0)$ then asymptotic dependence holds with $\chi_X$ given by~\eqref{eq:Rrvchi}.

\paragraph{Model of \citet{Krupskiietal2017}} They consider location mixtures of Gaussian distributions, corresponding to scale mixtures of log-Gaussian distributions. According to their Proposition~1, asymptotic dependence occurs when the location variable is of exponential type, i.e.\ the scale is of Pareto type; the given $\chi_X$ can then be obtained via~\eqref{eq:Rrvchi}. When the location is Weibull-tailed but with shape in $(0,1)$, the scale is superheavy-tailed, with $\bar{F}_R \in \RV_0^\infty$, and perfect extremal dependence ($\chi_X=1$) arises, as noted in Remark~\ref{rmk:SVR} following Proposition~\ref{prop:Frechet_chi}. When the random location is Weibull-tailed with shape in $(1,\infty)$ then the random scale $R$ is in the Gumbel MDA and asymptotic independence arises. 
If $\bar{F}_{\log R} \in \mathrm{WT}_2$ has the same Weibull coefficient $2$ as the standard Gaussian $\log W$ and as $\log W_\wedge$ (provided that $ \rho = \rm{cor}(\log W_1,\log W_2) \in (-1,1]$), then we can  apply Proposition~\ref{prop:lightS} to calculate the value of $\eta_X$ given as
$$
\eta_X=\eta_W\frac{\alpha_{W_\wedge}+\alpha_R}{\alpha_W+\alpha_R}=\frac{1+\rho}{2}\frac{(1+\rho)^{-1}+\alpha_R}{1/2+\alpha_R}=\frac{1+(1+\rho)\alpha_R}{1+2\alpha_R}, 
$$
which extends the results of \citet{Krupskiietal2017}. Specifically, with standard Gaussian $\log R$ we get $\eta_X=(3+\rho)/4$, see Example~\ref{ex:gfm}. 

\paragraph{Model of \citet{HuserWadsworth2017}} They consider scale mixtures of asymptotically independent vectors where both $R$ and $W$ have Pareto margins with different shape parameters. Asymptotic dependence arises when $R$ is heavier tailed; $\chi_X$ is then given by~\eqref{eq:Rrvchi}, whilst asymptotic independence arises when $W$ is heavier tailed and $\eta_X$ is given by~\eqref{eq:etaWRV}. When $R$ and $W$ have the same shape parameter, their assumption $\eta_W<1$ implies that $\E(W_{\wedge}^{\alpha+\varepsilon})<\infty$ for some $\varepsilon>0$, giving asymptotic independence by Proposition~\ref{prop:samealpha}\eqref{item:prop6wminlighter}. 

\bigskip
Various other articles also focus on polar or scale-mixture representations. \citet{Hashorva2012} examines the extremal behavior of scale mixtures when $R$ is in the Gumbel MDA. He considers both one- and two-dimensional $\mathcal{W}$, both with similarities and differences to our set-up. For one-dimensional $\mathcal{W}$, he assumes a functional constraint of the form $\W = (W,\rho W+z^*(W))$ for measurable $z^*:[0,1]\to (0,\infty)$; $\rho\in(-1,1)$ (the specification also allows for negative components, but we focus here on the positive part). Constraints to certain norm spheres, such as the Mahalanobis or $L_p$ norm, could be written in this way, however examples such as the $L_\infty$ norm could not. Where the representations overlap, our results coincide (e.g., Section 4.3 of \citet{Hashorva2012}). In the case of two-dimensional $\mathcal{W}$, $W_1$ is assumed bounded, whilst $W_\wedge$ is in the negative Weibull MDA. Although, differently to \citet{Hashorva2012}, we typically assume symmetry, there are nonetheless some connections between the results in our  Section~\ref{sec:RWnegwei} and that paper.

 \citet{Nolde2014} provides an interpretation of extremal dependence in terms of a \emph{gauge function} (see also \citet{BalkemaNolde2010} and \citet{BalkemaEmbrechts2007}), which, loosely speaking, corresponds to level sets of the density in light-tailed margins. The main result of \citet{Nolde2014} (Theorem 2.1) is presented in terms of Weibull-type margins, such that $-\log\bar{F}_X \in \RV_{\delta}^\infty$, $\delta>0$; in terms of Section~\ref{sec:Constrained}, this corresponds to $-\log\bar{F}_R \in \RV_{\delta}^\infty$. By noting that where the density of $R$, $f_R$, exists, the joint density of $\X = R(\tau(Z),\tau(1-Z))$ is
 \[
 f_{\X}(x_1,x_2) = f_{R}(\nu(x_1,x_2))f_{Z}(x_1/(x_1+x_2))\nu(x_1,x_2) / (x_1+x_2)^2, 
 \]
the gauge function of $\X$ is obtained as $\nu$ when $-\log\bar{F}_R \in \RV_{\delta}^\infty$, using Proposition~3.1 therein. We found $\eta_X = \zeta^{\delta}$, with $\zeta=\nu(1,1)^{-1}$, precisely as in \citet{Nolde2014}.

Various papers focus on extremal dependence arising from certain types of polar representation, but from a conditional extremes perspective \citep{HeffernanTawn2004,HeffernanResnick2007}. This is different to our focus; here we examine the extremal dependence as both variables grow at the same rate. In the conditional approach, different rates of growth may be required in the different components. \citet{Abdousetal2005} examine conditional limits in the context of elliptical copulas, whilst \citet{FougeresSoulier2010} and \citet{Seifert2014} consider the constrained $\mathcal{W}$ case, with $R$ in the Gumbel MDA.

\section{New examples}
\label{sec:NewExamples}

We present two new constructions that have the desirable property of smoothly interpolating between asymptotic dependence and asymptotic independence, whilst yielding non-trivial structures within each class. By smoothness, we mean that the transition between classes occurs at an interior point, $\theta_0$, of the parameter space $\Theta$, and, assuming increasing dependence with $\theta$, $\lim_{\theta \to \theta_{0+}} \chi_X = 0$, $ \lim_{\theta \to \theta_{0-}} \eta_X=1$. To our knowledge, the only other models in the literature with this behavior are (i) that of \citet{Wadsworthetal2017}, where $\nu(x,y) = \max(x,y)$ and $\bar{F}_R(x) = (1+\lambda x)_+^{-1/\lambda}$, $\lambda \in\mathbb{R}$, and (ii) that of \citet{HuserWadsworth2017} where $\bar{F}_R(x) = x^{-1/\delta}$, $\bar{F}_W(x) = x^{-1/(1-\delta)}$, $\delta \in(0,1)$, and $\eta_W<1$. The first example is constructed using constrained $\W$ (Section~\ref{sec:Constrained}), where the required ingredients are $\bar{F}_R$, $\nu$, and $F_Z$, whilst the second uses unconstrained $\W$ (Section~\ref{sec:Unconstrained}) with ingredients $\bar{F}_R$, $F_W$ and the dependence structure of $\W$.

\subsection{Model 1}
In Propositions~\ref{prop:RGumbel} and~\ref{prop:RNegWeib}, it was demonstrated how the shape of $\nu$ affects the tail dependence of $\X$ when $R$ is in the Gumbel or negative Weibull MDA. In Example~\ref{ex:AD}, a particular norm that yields asymptotic dependence was given; here we extend the parameterization of this norm and use our results to present a new asymptotically (in)dependent copula. Since the cases where $R$ has finite upper endpoint $r^\star<\infty$ lead to undefined $\eta_X$, we focus on $r^\star=\infty$.

\begin{prop}
\label{prop:constrainedmodel}
 Let $R$ be in the Gumbel MDA with $r^\star=\infty$ and let $\nu(x,y) = \theta\max(x,y)+(1-\theta)\min(x,y)$, $\theta\geq 1/2$. Then for $\W$ defined through~\eqref{eq:Wnu} and $Z$ satisfying the conditions in Section~\ref{sec:Gumbel}, 
 \begin{align*}
  \chi_X &= \max(2(\theta-1)/(2\theta-1),0) & \eta_X &= \lim_{x\to\infty} \log \bar{F}_R(x)/ \log \bar{F}_R(x/\min(\theta,1)).                                                
 \end{align*}
\end{prop}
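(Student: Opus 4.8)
The plan is to apply Proposition~\ref{prop:RGumbel} to this specific norm, after verifying its hypotheses and, crucially, computing $\zeta=\tau(1/2)$.

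First I would confirm that $\nu$ is a symmetric norm for every $\theta\geq 1/2$. Writing $\nu(x,y)=(2\theta-1)\max(x,y)+(1-\theta)(x+y)$ on $\mathbb{R}_+^2$, for $\theta\in[1/2,1]$ this is a nonnegative combination of the $L_\infty$ and $L_1$ norms and hence a norm; for $\theta>1$ one checks directly that the unit ball is convex, its boundary in the first quadrant being the two segments joining $(1/\theta,0)$ and $(0,1/\theta)$ to the ``pointy'' vertex $(1,1)$. In all cases $\nu(1,1)=1$.

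The crux is the evaluation of $\zeta$. Imposing the scaling convention $\nu(x,y)\geq\max(x,y)$ of Section~\ref{sec:Gumbel} requires no rescaling when $\theta\geq 1$, whereas for $\theta<1$ it amounts to replacing $\nu$ by $\nu/\theta$, so that the sphere touches the $\max$-sphere along the axes; equivalently, $\zeta$ is the intrinsic, scale-invariant ratio of the essential supremum of $W_\wedge$ to that of $W_1\vee W_2$. Evaluating $\tau(z)=z/\nu(z,1-z)$ piecewise on $[0,1/2]$ and $[1/2,1]$ then yields the single formula $\zeta=\min(\theta,1)$, together with the location of $I_\nu=[b_1,b_2]$: for $\theta>1$ one finds $b_1=b_2=1/2$ and $\P(W=1)=0$; for $\theta=1$ (the $L_\infty$ case) $b_1=1/2$, $b_2=1$ and $\P(W=1)=\P(Z\geq 1/2)=1/2>0$; and for $\theta\in[1/2,1)$ one finds $b_1=b_2=1$ and $\P(W=1)=0$. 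Since $\tau$ is in each case a piecewise-smooth rational function with a single corner at $z=1/2$, (N1) holds, and since $\tau$ approaches $1$ linearly at its peak, (N2) holds with $\gamma=1$; (Z1) is assumed.

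Feeding these into Proposition~\ref{prop:RGumbel} completes the argument. When $\theta\in[1/2,1)$ we have $\zeta<1$, so part~1 gives $\chi_X=0$ and $\eta_X=\lim_{x\to\infty}\log\bar{F}_R(x)/\log\bar{F}_R(x/\zeta)$, which is the claimed limit since $\zeta=\theta=\min(\theta,1)$. When $\theta\geq 1$ we have $\zeta=1$ and part~2 gives $\eta_X=1$; for $\theta=1$ the mass $\P(W=1)>0$ forces $\chi_X=0$, whereas for $\theta>1$ the derivatives $\tau_1'(1/2_-)=4\theta$ and $\tau_2'(1/2_+)=-4(\theta-1)$ give $\chi_X=2\tau_2'(1/2_+)/\{\tau_2'(1/2_+)-\tau_1'(1/2_-)\}=2(\theta-1)/(2\theta-1)$, as in Example~\ref{ex:AD}. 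A final check verifies that the two displayed formulas package all regimes: $\max\{2(\theta-1)/(2\theta-1),0\}$ returns $2(\theta-1)/(2\theta-1)$ for $\theta>1$ and $0$ for $\theta\leq 1$ (the ratio being nonpositive there as $2\theta-1\geq 0$), and $\lim_{x\to\infty}\log\bar{F}_R(x)/\log\bar{F}_R(x/\min(\theta,1))$ equals $1$ for $\theta\geq 1$ and the $\zeta=\theta$ limit otherwise. The main obstacle is precisely this case-sensitive geometry of the unit sphere across $\theta=1$ --- pointy peak, flat $L_\infty$ segment, or interior kink --- which changes $b_1,b_2$ and hence $\P(W=1)$; pinning these down is what makes the unified formulas hold, while the remaining steps are routine differentiation and direct citation of Proposition~\ref{prop:RGumbel}.
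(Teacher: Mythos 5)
Your proposal is correct and takes essentially the same route as the paper: verify the framework's scaling convention, note that rescaling $\nu$ by $\theta^{-1}$ for $\theta\in[1/2,1]$ leaves the copula (hence $\chi_X,\eta_X$) unchanged so that $\zeta=\min(\theta,1)$, and then read off both regimes from Proposition~\ref{prop:RGumbel}, with the $\theta>1$ case of $\chi_X$ exactly as in Example~\ref{ex:AD}. Your write-up is in fact more detailed than the paper's (explicit computation of $I_\nu$, the one-sided derivatives $\tau_1'(1/2_-)=4\theta$, $\tau_2'(1/2_+)=-4(\theta-1)$, and the checks of (N1)--(N2)), but it contains no new idea beyond the paper's argument.
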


We note that if $- \log \bar{F}_R \in \RV_\delta$, for example, then we have a continuous parametric family exhibiting asymptotic dependence for $\theta>1$ with $\chi_X = 2(\theta-1)/(2\theta-1)$, and asymptotic independence for $\theta \leq 1$ with $\eta_X = \theta^\delta$. To make things concrete, we propose the following model.

\newtheorem{model}{Model}

\begin{model}
\label{mod:1}
\begin{align*}
\bar{F}_R(r) &=  e^{-r^\delta}  & \nu(x,y)&=\theta\max(x,y)+(1-\theta)\min(x,y),~~ \theta \geq 1/2 & Z &\sim \mbox{Beta}(\alpha,\alpha).
\end{align*}
\end{model}

The set of models defined in Proposition~\ref{prop:constrainedmodel}, exemplified by Model~\ref{mod:1}, has some rather interesting behavior in the extremes. Whilst the limiting quantities $\chi_X$, $\eta_X$ are given in Proposition~\ref{prop:constrainedmodel}, the subasymptotic behavior of $\X$, in particular the behavior of the slowly varying function $\ell$ in~\eqref{eq:chi}, is not prescribed by any of the propositions in this paper. Combining equations~\eqref{eq:chi} and~\eqref{eq:eta}, define \[\chi_X(q) = \Pr(X_1 \geq F_{X_1}^{-1}(q),X_1 \geq F_{X_1}^{-1}(q))/(1-q) = \ell(1-q)(1-q)^{1/\eta_X - 1},\] so that for $\eta_X =1$, $\chi_X(q) = \ell(1-q)$. For Model~\ref{mod:1} we find that $\chi_X(q)$ is not necessarily monotonic, and may decrease before increasing to a positive limit value. Figure~\ref{fig:chis} shows $\chi_X(q)$ for various parameterizations of the model. This non-monotonic behavior appears uncommon; to our knowledge there are no well-known theoretical examples of this.

\begin{figure}[htb]
 \centering
 \includegraphics[width=0.3\textwidth]{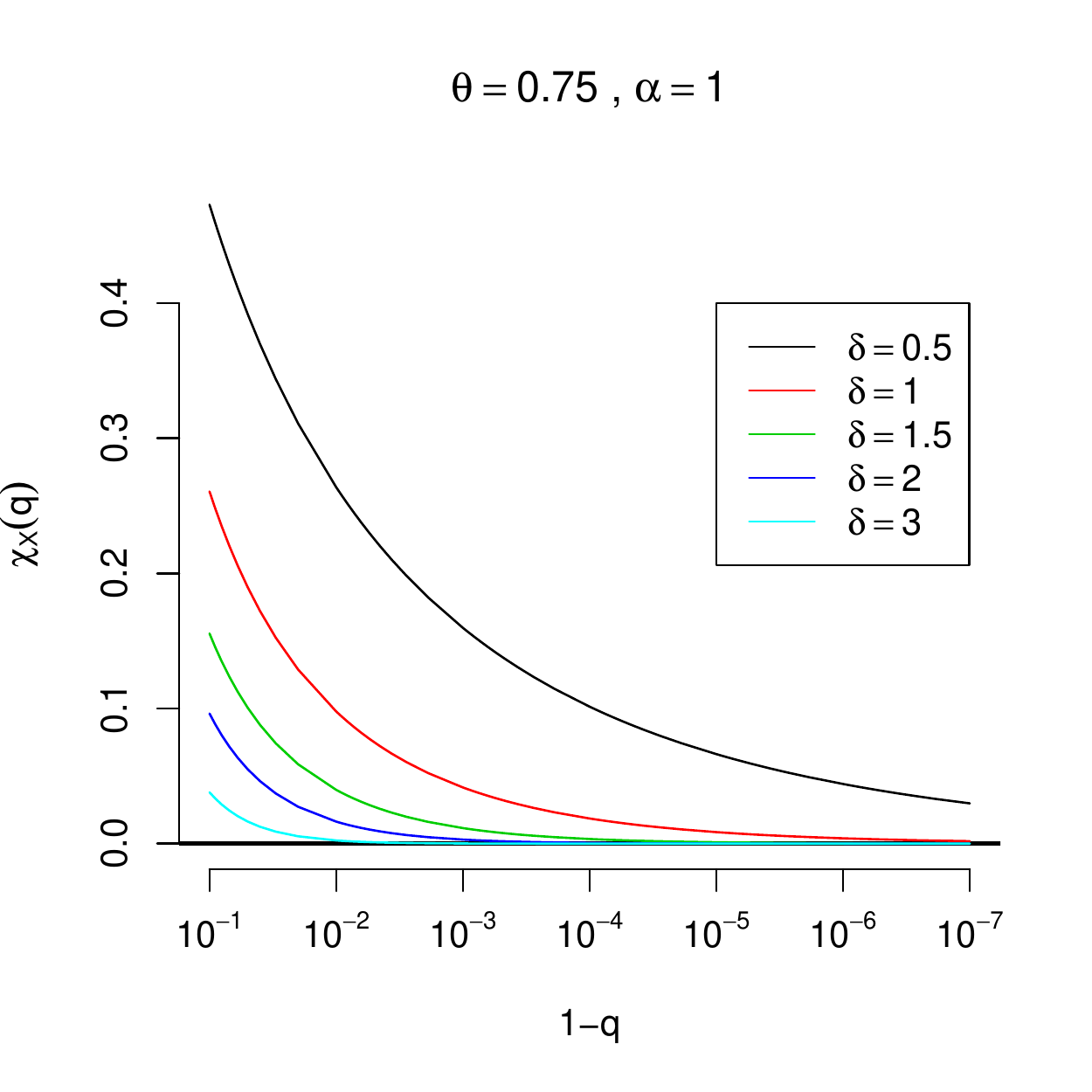}
 \includegraphics[width=0.3\textwidth]{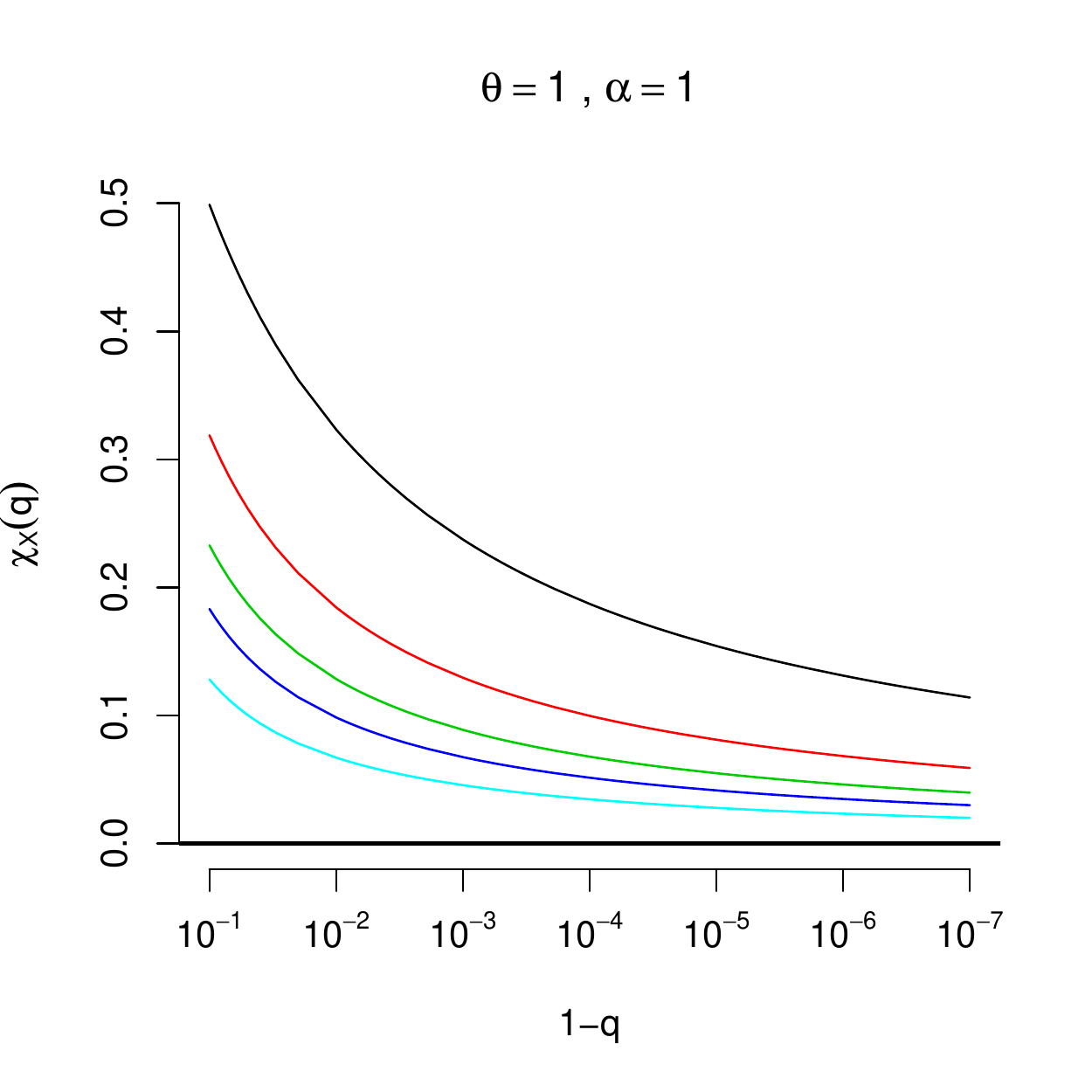}
 \includegraphics[width=0.3\textwidth]{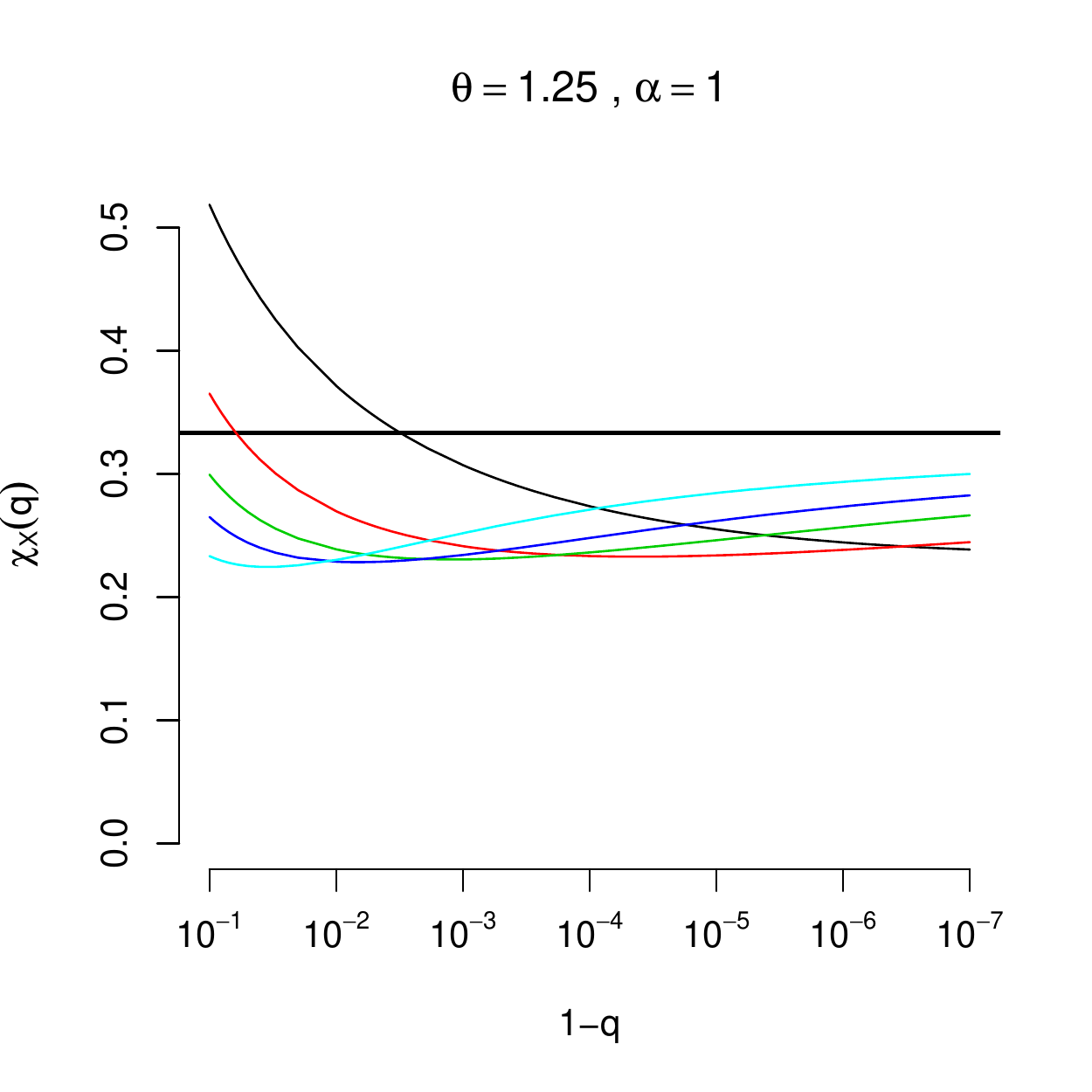}\\
 \includegraphics[width=0.3\textwidth]{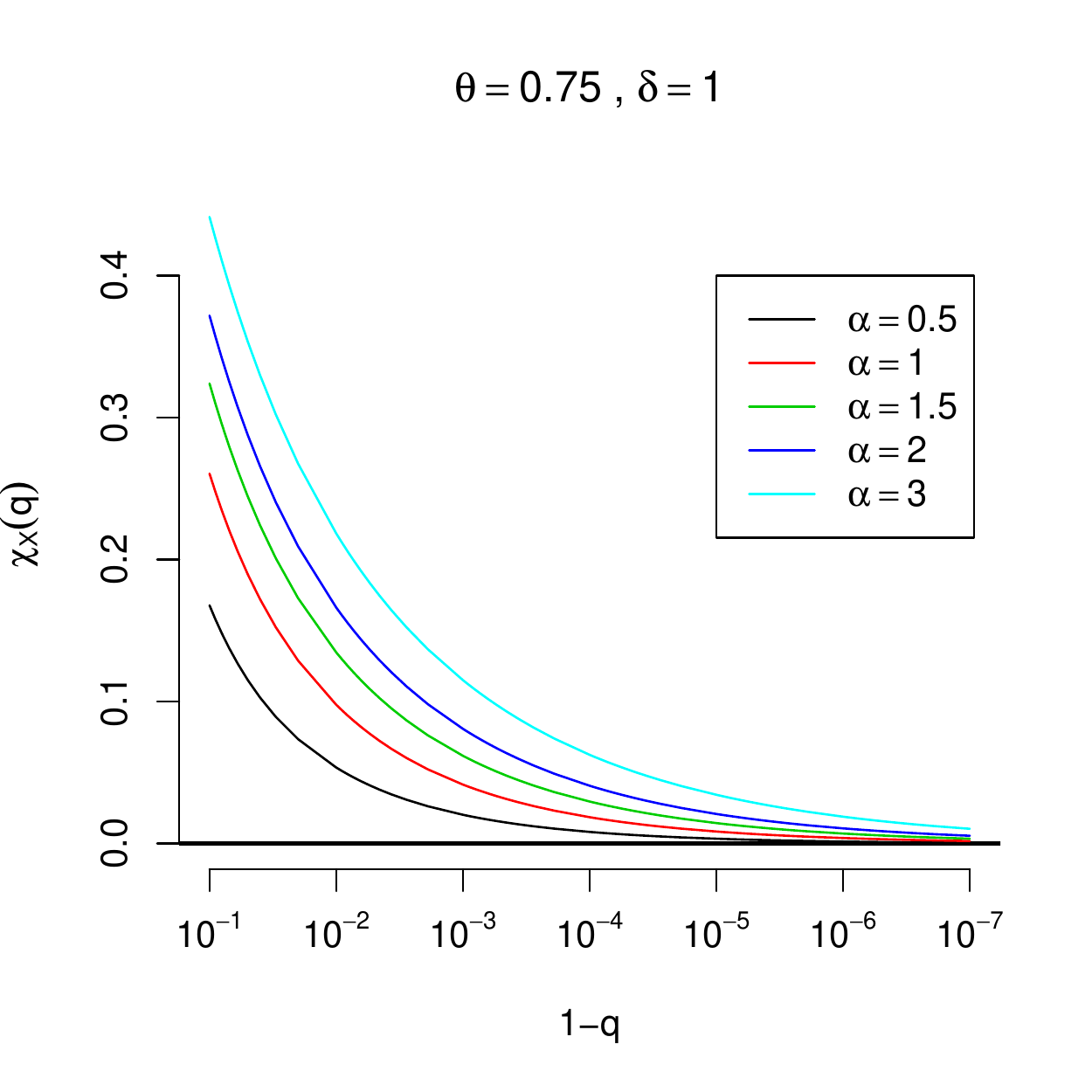}
 \includegraphics[width=0.3\textwidth]{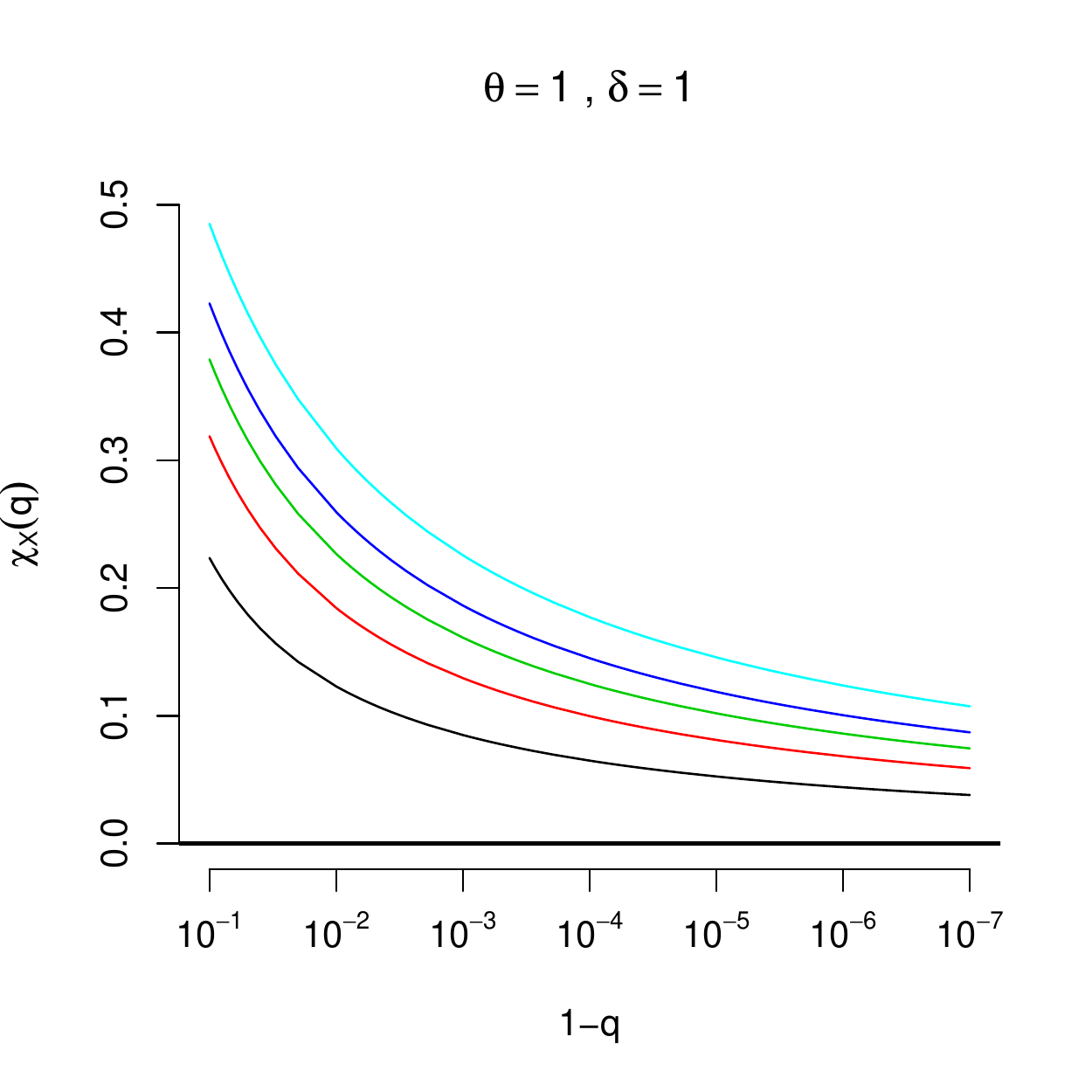}
 \includegraphics[width=0.3\textwidth]{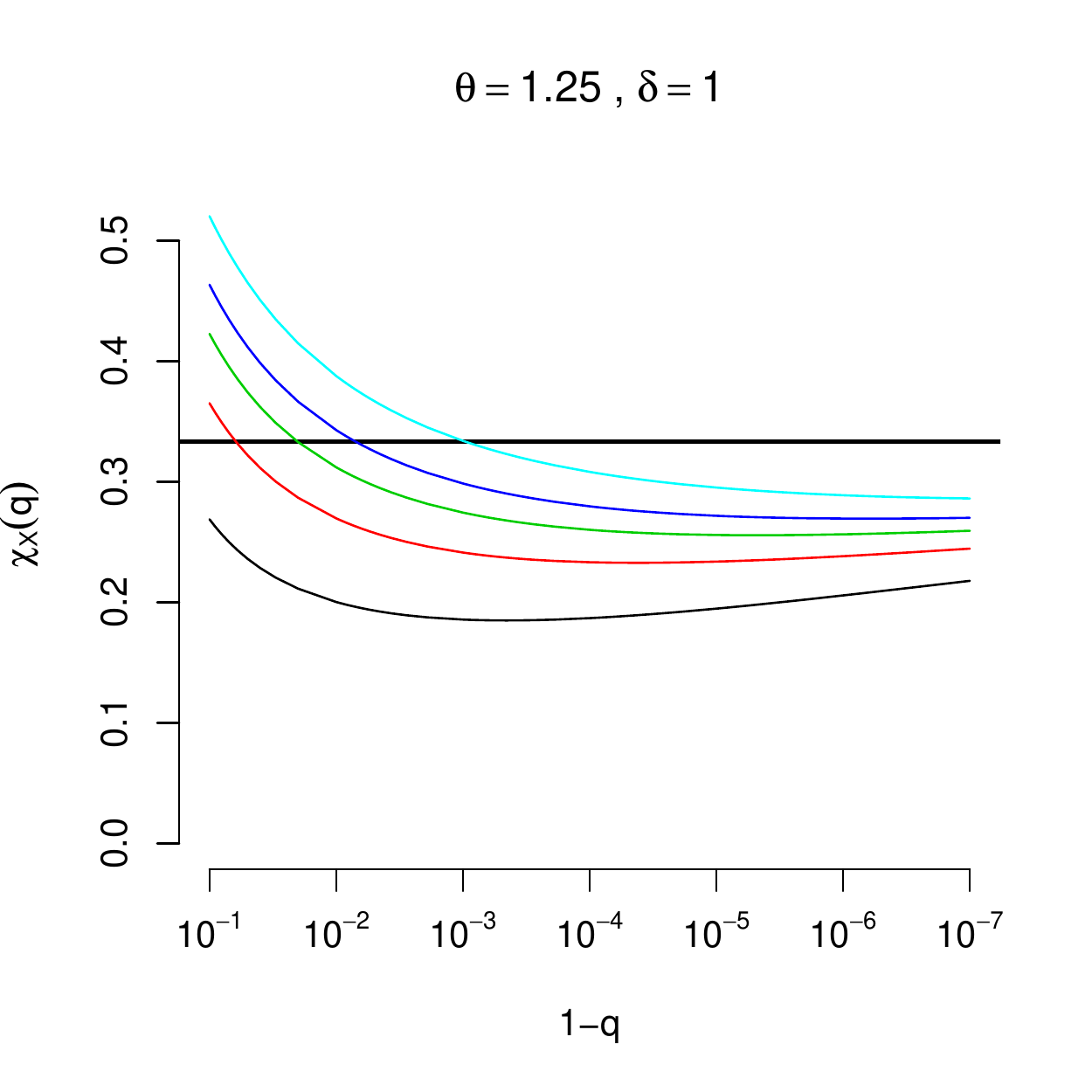}
 \caption{Theoretical $\chi_X(q)$ for Model~\ref{mod:1} plotted against $1-q$ on a logarithmic scale for $q \in [1-10^{-1},1-10^{-7}]$. Different columns show different values of $\theta$; thick horizontal lines show the true limiting values of $\chi_X = (0,0,1/3)$ (left-right). Top row: $\delta$ varies within a panel; bottom row: $\alpha$ varies within a panel.} \label{fig:chis}
\end{figure}

\subsection{Model 2}

The following proposition collates results from Propositions~\ref{prop:Frechet_chi} and~\ref{prop:negwei}, and provides a general principle for constructing new dependence models permitting both asymptotic dependence and asymptotic independence.
\begin{prop}
\label{prop:unconstrainedmodel}
 Let $R$ be in the MDA of a generalized extreme value distribution with shape parameter $\xi\in\mathbb{R}$, and let $\W$ with $W_1 \overset{d}{=} W_2 \overset{d}{=} W$ have $\chi_W=0$, well-defined $\eta_W\in(0,1)$, and $\bar{F}_W(w^\star - \cdot) \in \RV_{\alpha_W}^0$, $\alpha_W>0$. Then
 \begin{enumerate}
  \item For $\xi>0$, $\chi_X = \E(W_\wedge^{1/\xi})/\E(W^{1/\xi})$, $\eta_X=1$,  
  \item For $\xi=0$, $\chi_X=0$, $\eta_X=1$,
  \item For $\xi<0$, $\chi_X=0$, $\eta_X=(1-\xi\alpha_W) /(1- \xi \alpha_{W}/\eta_W)$.
 \end{enumerate}
\end{prop}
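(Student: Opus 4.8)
The plan is to recognise this proposition as a direct specialisation of Propositions~\ref{prop:Frechet_chi} and~\ref{prop:negwei}, once the tail of $W_\wedge$ has been pinned down. The shape parameter $\xi$ of the GEV limit dictates the MDA of $R$: $\xi>0$ places $R$ in the Fr\'echet MDA with $\bar F_R\in\RV_{-1/\xi}^\infty$, $\xi=0$ in the Gumbel MDA, and $\xi<0$ in the negative Weibull MDA with $\bar F_R(r^\star-\cdot)\in\RV_{\alpha_R}^0$ where $\alpha_R=-1/\xi$. The case $\xi>0$ follows immediately from Proposition~\ref{prop:Frechet_chi}: since $W$ has finite upper endpoint $w^\star$ it is bounded, so the moment condition $\E(W^{1/\xi+\varepsilon})<\infty$ is automatic, and using $W_1\stackrel{d}{=}W_2$ (so that $\min(W_1^\alpha,W_2^\alpha)=W_\wedge^\alpha$ and $\E(W_1^\alpha)=\E(W_2^\alpha)=\E(W^\alpha)$) collapses~\eqref{eq:Rrvchi} with $\alpha=1/\xi$ to $\chi_X=\E(W_\wedge^{1/\xi})/\E(W^{1/\xi})$, with $\eta_X=1$. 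The cases $\xi\le0$ require one extra ingredient: that $W_\wedge$ also lies in the negative Weibull MDA at the shared endpoint $w^\star$.

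First I would identify the tail of $W_\wedge$. Since $\eta_W\in(0,1)$ is well defined, $\bar F_{W_\wedge}(F_W^{-1}(q))$ is eventually positive, so $W_\wedge$ shares the upper endpoint $w^\star$ with $W$ (otherwise this probability would vanish for $q$ near $1$, leaving $\eta_W$ undefined). Writing $u=w^\star-s$ and combining $\bar F_W(w^\star-s)=\ell_W(s)s^{\alpha_W}$ with the Ledford--Tawn form~\eqref{eq:eta} for the pair gives
\begin{equation*}
\bar F_{W_\wedge}(w^\star-s)=\ell\!\left(\bar F_W(w^\star-s)\right)\bar F_W(w^\star-s)^{1/\eta_W}
=\ell\!\left(\ell_W(s)s^{\alpha_W}\right)\ell_W(s)^{1/\eta_W}\,s^{\alpha_W/\eta_W}.
\end{equation*}
The prefactor is slowly varying in $s$ at $0$: the factor $\ell_W(s)^{1/\eta_W}$ is slowly varying, and since $\ell_W(s)s^{\alpha_W}\in\RV_{\alpha_W}^0$ with $\alpha_W>0$ and $\ell$ is slowly varying at $0$, the composition $\ell(\ell_W(s)s^{\alpha_W})$ is again slowly varying in $s$ by the uniform convergence theorem for regularly varying functions. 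Hence $\bar F_{W_\wedge}(w^\star-\cdot)\in\RV_{\alpha_{W_\wedge}}^0$ with $\alpha_{W_\wedge}=\alpha_W/\eta_W$, and in particular $\alpha_{W_\wedge}>\alpha_W$ because $\eta_W<1$; this confirms that $W_\wedge$ satisfies~\eqref{eq:nwei}.

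With this in hand the remaining two cases are substitution. For $\xi=0$, both $W$ and $W_\wedge$ satisfy~\eqref{eq:nwei}, so Proposition~\ref{prop:negwei}\eqref{item9.1} yields $\chi_X=\chi_W=0$ and $\eta_X=1$. For $\xi<0$, all of $R$, $W$, $W_\wedge$ satisfy~\eqref{eq:nwei}, and since $\alpha_{W_\wedge}>\alpha_W$ we are in the second regime of Proposition~\ref{prop:negwei}\eqref{item9.3}, giving $\chi_X=0$ and $\eta_X=(\alpha_W+\alpha_R)/(\alpha_{W_\wedge}+\alpha_R)$. Substituting $\alpha_R=-1/\xi$ and $\alpha_{W_\wedge}=\alpha_W/\eta_W$ and multiplying numerator and denominator by $-\xi>0$ converts this into $(1-\xi\alpha_W)/(1-\xi\alpha_W/\eta_W)$, as claimed.

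The main obstacle is the tail identification of $W_\wedge$ in the middle paragraph: the Ledford--Tawn assumption~\eqref{eq:eta} controls the pair survival probability on the \emph{probability} scale, whereas Proposition~\ref{prop:negwei} requires regular variation on the \emph{original} scale $s=w^\star-u$, and the transfer between the two hinges entirely on the composition-of-slowly-varying argument together with the endpoint-matching that $\eta_W>0$ supplies. Once $\alpha_{W_\wedge}=\alpha_W/\eta_W$ is secured, the rest is bookkeeping across the three domains of attraction.
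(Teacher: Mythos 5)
Your proposal is correct and takes essentially the same route as the paper's proof: part~1 via Proposition~\ref{prop:Frechet_chi} (with the moment condition automatic from boundedness), the key identification $\alpha_{W_\wedge}=\alpha_W/\eta_W$ obtained by reading the Ledford--Tawn representation at the shared endpoint $w^\star$, and parts~2 and~3 by substitution into Proposition~\ref{prop:negwei}\eqref{item9.1} and~\eqref{item9.3}. You merely make explicit what the paper leaves implicit, namely the slow-variation composition argument justifying $\bar{F}_{W_\wedge}(w^\star-\cdot)\in\RV_{\alpha_W/\eta_W}^0$ and the algebra converting $(\alpha_W+\alpha_R)/(\alpha_{W_\wedge}+\alpha_R)$ with $\alpha_R=-1/\xi$ into the stated formula.
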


The model construction opportunities from Proposition~\ref{prop:unconstrainedmodel} are quite varied; specifically taking $\bar{F}_R$ that permits all three tail behaviors produces a flexible range of models spanning the two dependence classes. We therefore propose the following concrete model, based on our running independence example.

\begin{model}
\label{mod:2}
\begin{align*}
\bar{F}_R(r) &=  (1+\xi x)^{-1/\xi}_+,~~ \xi\in\mathbb{R}  & W_1 &\ci W_2 & W &\sim \mbox{Beta}(\alpha,\alpha).
\end{align*}
\end{model}

For the special case $\alpha=1$, i.e, $W \sim \mbox{Unif}(0,1)$, one can explicitly calculate $\chi_X$ and $V_X$ as well as $\eta_X$. By Proposition~\ref{prop:unconstrainedmodel}, for $\xi<0$, $\eta_X = (1-\xi)/(1-2\xi)$ with $\lim_{\xi\to 0_{-}} \eta_X=1$, and $\chi_X = 0$; for $\xi=0$, $\chi_X=0$, $\eta_X=1$, and for $\xi>0$, $\chi_X = 2\xi/(2\xi+1)$, whilst
\begin{align}
 V_X(x_1,x_2) = \min(x_1,x_2)^{-1} + \frac{1}{2\xi+1} \left(\frac{\min(x_1,x_2)}{\max(x_1,x_2)}\right)^{\xi} \max(x_1,x_2)^{-1}. \label{eq:mod2V}
\end{align}
The limits of~\eqref{eq:mod2V} as $\xi\to 0$ and $\xi\to\infty$ are $x_1^{-1}+x_2^{-1}$ and $\min(x_1,x_2)^{-1}$, corresponding to independence and perfect dependence \citep[e.g.][Ch. 8]{Beirlantetal2004}. Figure~\ref{fig:model2chis} displays the function $\chi_X(q)$ for Model~\ref{mod:2} across a range of different $\alpha$ and $\xi$ values. 
\begin{figure}[h]
 \centering
 \includegraphics[width=0.3\textwidth]{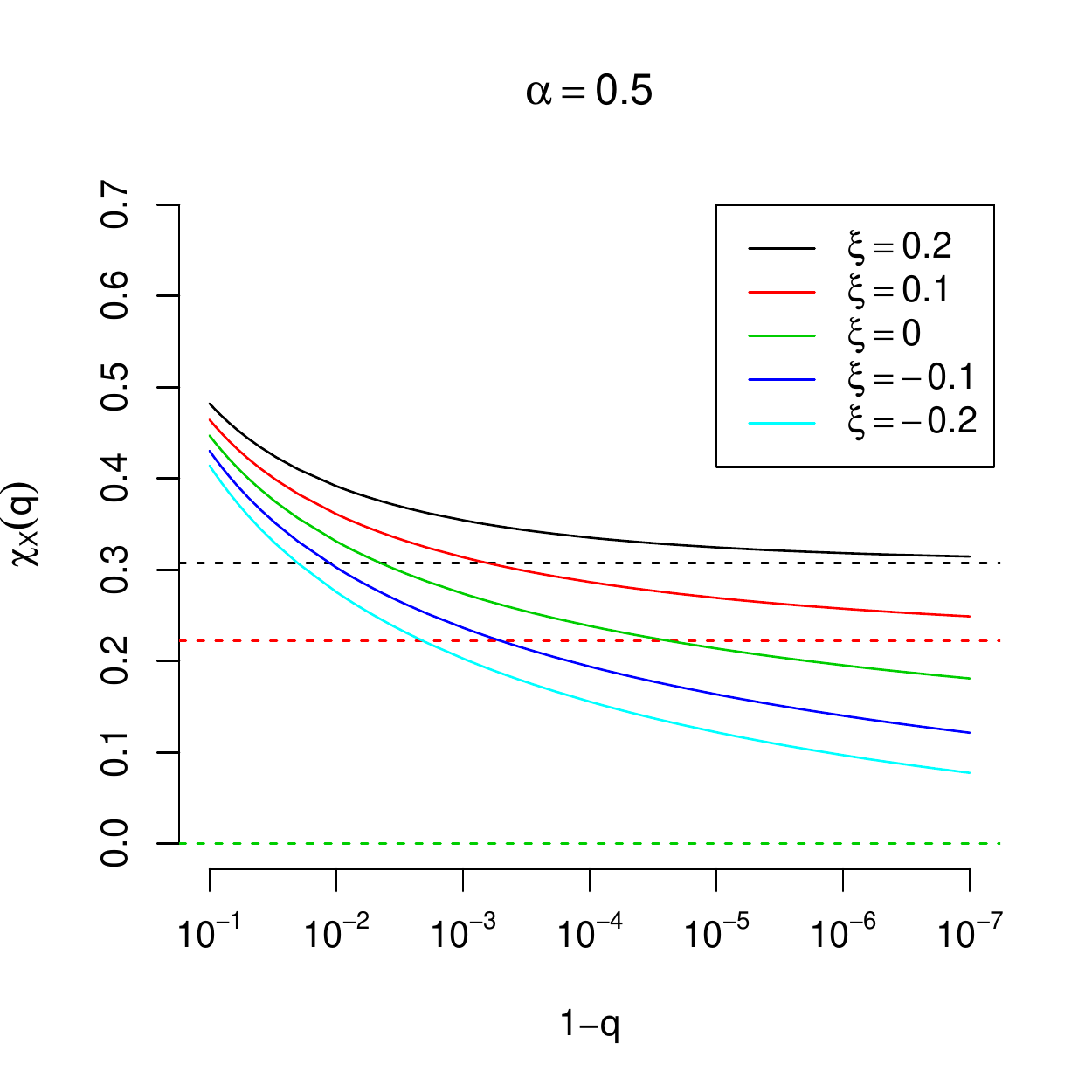}
 \includegraphics[width=0.3\textwidth]{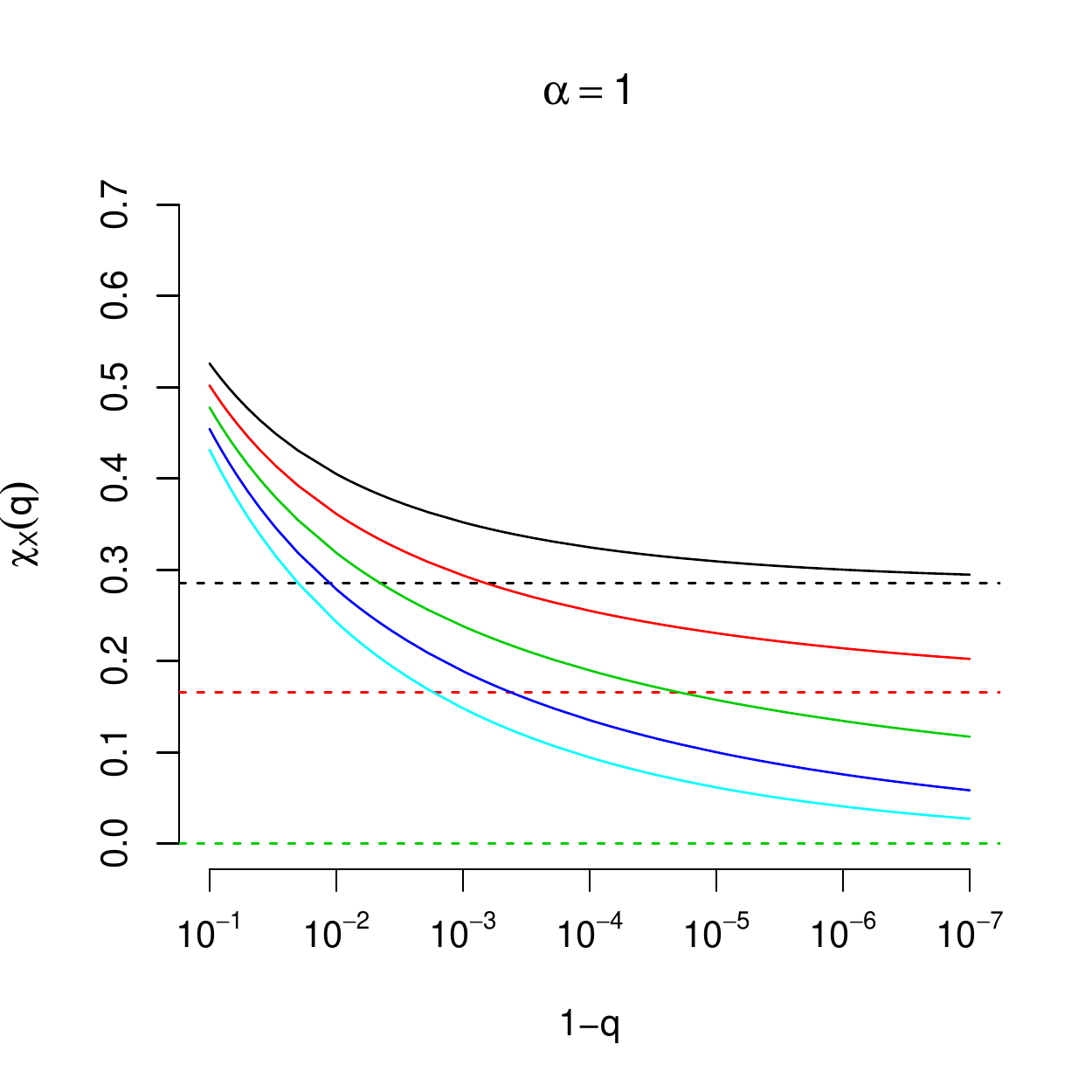}
 \includegraphics[width=0.3\textwidth]{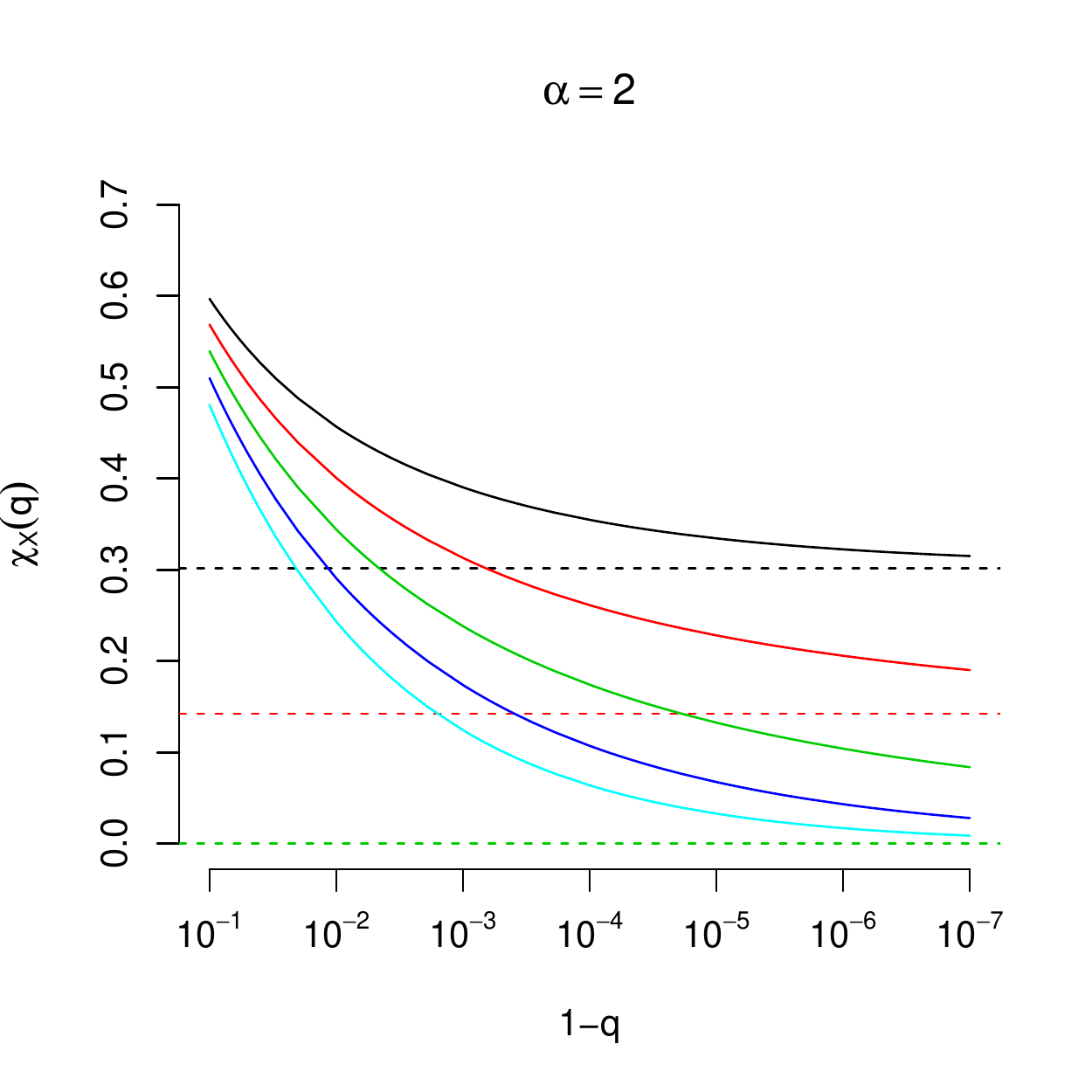}
 \caption{Theoretical $\chi_X(q)$ for Model~\ref{mod:2} plotted against $1-q$ on a logarithmic scale for $q \in [1-10^{-1},1-10^{-7}]$. Different columns show different values of $\alpha$; dashed horizontal lines show the true limiting values of $\chi_X$, which depends on $\xi$ (and is zero for $\xi\leq 0$).}
 \label{fig:model2chis}
\end{figure}

\section{Discussion}
\label{sec:Conclusions}

The paper studies the extremal properties of copulas $\X = R \W$ and determines the tail and residual tail dependence coefficients $\chi_X$ and $\eta_X$, respectively.

In Section~\ref{sec:Constrained}, where $\W$ is constrained to the sphere of some norm, classical results on multivariate Pareto copulas are recovered for regularly varying $R$, whereas new structures are obtained for distributions of $R$ with light tails or finite upper endpoint. In particular, for the Gumbel MDA we get a large variety of behaviors for asymptotically independent $\X$ that strongly depend on the auxiliary function $b$ of $R$ and the shape of the $\nu$-sphere. This extends the results of \citet{Wadsworthetal2017} who considered only the exponential distribution in this class.

For unconstrained distributions of both $R$ and $W$, Section~\ref{sec:Unconstrained} formalizes the general intuition that heavier tails of $R$ introduce more additional dependence in $\X$. The results summarized in Table \ref{tab:indep} for the special case of the independence model allow for several conclusions. The most interesting (and involved) situations figure along the main diagonal where $R$ and $W$ have similar tail behavior. Above this main diagonal, $R$ is so heavy that it mostly dominates the extremal dependence in $\X$. On the other hand, below the diagonal, $R$ is too light tailed, relatively to $W$, to have an impact on the tail dependence coefficients $\chi_X$ and $\eta_X$. Similar observations hold true for the more general case
of arbitrary dependence in $\W$ summarized in Table \ref{tab:unconstrained}. 

We note that there is a clear overlap between the results obtained in Sections~\ref{sec:Constrained} and~\ref{sec:Unconstrained}. If one considers $\chi_W$ as derived from the shape of $\mathcal{W}$, then many results in Section~\ref{sec:Constrained} are obtained from Section~\ref{sec:Unconstrained}, just as Proposition~\ref{prop:Frechet_chi} is relevant in both sections. However, the separate treatment seems justified on the grounds of the importance of such constructions, and the additional insight gained in focusing on the shape of $\mathcal{W}$. 

Multivariate analogs of the upper and residual tail dependence coefficients are obtained by considering the $d$-variate survival function $\P(X_1\geq x_1,\ldots,X_d\geq x_d)$ in~\eqref{eq:chi} and~\eqref{eq:eta}. For random scale constructions in $d$ dimensions, the results from Section~\ref{sec:Unconstrained} are all directly applicable if the $W_j$ components have common margins, since similarly to the bivariate case, we only need to consider the two variables $X_\wedge = R \min(W_1,\ldots,W_d)$ and $X_j = R W_j$. An assumption of common margins is more realistic in spatial models, where dependence is often analyzed in terms of bivariate margins anyway.

The above results provide a general and unifying framework to analyze bivariate extremal dependence, and Section~\ref{sec:Examples} shows that they cover many of the existing examples in the copula and the extreme value literature. Most importantly, combining the insights from different sections enables the construction of numerous new statistical models that smoothly interpolate between asymptotic dependence and independence; see Section~\ref{sec:NewExamples} for two instances. 

Although our focus was on dependence, knowledge on how the marginal scales of $R$ and  $W$ and the dependence properties of $\W$ influence the dependence of $\X$ makes it easier to construct models $\X$ that naturally accommodate both marginal distributions and dependence of multivariate data. Such modeling avoids what may be construed as the artificial separation of modeling of margins and dependence known as copula modeling. For example, in factor constructions based on independent random variables, such as the ones with independent $W_1$ and $W_2$ discussed throughout, our results give guidance on the relative tail heaviness of $R$ with respect to $\W$ necessary to transition from asymptotic independence to asymptotic dependence in $\X$, and both heavy- or light-tailed marginal distributions are possible by considering the distribution of either $\X$ or $\log \X$ as a model for data. 

In Sections~\ref{sec:Constrained} and~\ref{sec:Unconstrained}, we often considered the simplification $W_1\overset{d}{=}W_2$, yielding $X_1\overset{d}{=}X_2$, which allows the coefficients $\chi_X$ and $\eta_X$ to be calculated without reference to marginal quantile functions. A weaker sufficient condition for this is $\bar{F}_{X_1}(x) \sim \bar{F}_{X_2}(x)$ as $x \to x^\star$, with $x^\star$ a common upper end point. To see this sufficiency, define $x_q = \min\{F_{X_1}^{-1}(q),F_{X_2}^{-1}(q)\}$, $x^q = \max\{F_{X_1}^{-1}(q),F_{X_2}^{-1}(q)\}$, and note that 
\begin{align}
 \frac{\P(X_1 \geq x^q,X_2 \geq x^q)}{\max\{\bar{F}_{X_1}(x^q),\bar{F}_{X_2}(x^q)\}}  \leq  \frac{\P(X_1 \geq F_{X_1}^{-1}(q),X_2 \geq F_{X_2}^{-1}(q))}{1-q} \leq \frac{\P(X_1 \geq x_q,X_2 \geq x_q)}{\min\{\bar{F}_{X_1}(x_q),\bar{F}_{X_2}(x_q)\}}, \label{eq:chiineq}
\end{align}
where $\max\{\bar{F}_{X_1}(x^q),\bar{F}_{X_2}(x^q)\} = \min\{\bar{F}_{X_1}(x_q),\bar{F}_{X_2}(x_q)\} = 1-q$. Consequently, the tail dependence coefficient $\chi_X$ of $(X_1,X_2)$, if it exists, is bounded between the limit superior of the left-hand side and the limit inferior of the right-hand side in \eqref{eq:chiineq}, respectively, for $q \to 1$. Whilst these bounds hold in general for common upper end point, they deliver the precise coefficient $\chi_X$ only if $\bar{F}_{X_1}(x) \sim \bar{F}_{X_2}(x)$, $x\to x^\star$ or both limits are zero.
Similar arguments apply to the residual tail dependence coefficient $\eta_X$, where the corresponding bounds
determine $\eta_X$ under the weaker requirement $\log\bar{F}_{X_1}(x) \sim \log\bar{F}_{X_2}(x)$, $x\to x^\star$.

Whilst our focus has been on the coefficients $\chi_X$ and $\eta_X$, we note that there are important aspects of the dependence structure that are not described by these coefficients. For example, in Section~\ref{sec:Constrained}, we found that when $R$ was in the Gumbel or negative Weibull MDA, $\chi_X$ and $\eta_X$ depended only on the shape of $\nu$ and the distribution of $R$, but not at all on the distribution of $Z$. Nonetheless, the latter plays an important role in the behavior of the slowly varying function $\ell$ in~\eqref{eq:eta}, which was exemplified in Figure~\ref{fig:chis}.


\section{Proofs}
\label{sec:Proofs}

This section contains proofs of propositions from Sections~\ref{sec:Constrained}, \ref{sec:Unconstrained} and~\ref{sec:NewExamples}. Proofs of lemmas are deferred to Appendix~\ref{app:AProofs}.
Recall that in the case of common marginal distributions $F_X$ with upper endpoint $x^\star$, the tail dependence coefficient is $ \chi_X=\lim_{x\to x^{\star}} \bar{F}_{X_\wedge}(x)/\bar{F}_X(x)$, whilst the residual tail dependence coefficient is $\eta_X = \lim_{x\to x^\star}\log \bar{F}_{X}(x)/\log \bar{F}_{X_\wedge}(x)$.

\subsection{Proofs for Section~\ref{sec:Constrained}}

\begin{proof}[Proof of Proposition~\ref{prop:Frechet_chi}]
 Since $\E(W_j^{\alpha+\varepsilon})<\infty$, $j=1,2$, 
 Breiman's lemma \citep[][see also Lemma~\ref{lem:breiman} in Appendix~\ref{app:AProofs}]{Breiman1965}
 gives
 \begin{align}
  \bar{F}_{X_j}(x)\sim \E(W_j^\alpha) \bar{F}_R(x),\qquad x\to\infty, \label{eq:Breiman}
 \end{align}
so that $\bar{F}_{X_j} \in \RV_{-\alpha}^\infty$. Now consider the quantile functions of $X_j$ and $R$; denote these by $F_{X_j}^{-1}(q), F_R^{-1}(q)$. Suppose firstly that $\alpha>0$. Taking the reciprocal of relation~\eqref{eq:Breiman}, and using Proposition 2.6~(vi) of \citet{Resnick07}, we have
\begin{align}
 F_{X_j}^{-1}(q) &\sim F_R^{-1}(q)\E(W_j^\alpha)^{1/\alpha}, \qquad q\to 1. \label{eq:quant}
\end{align}
Consider now 
\begin{align*}
 \P(X_1 \geq F_{X_1}^{-1}(q),X_2 \geq F_{X_2}^{-1}(q))
 &= \P\left(R \min\left\{\frac{W_1}{\E(W_1^\alpha)^{1/\alpha}}[1+o(1)],\frac{W_2}{\E(W_2^\alpha)^{1/\alpha}}[1+o(1)]\right\}  \geq F_R^{-1}(q)\right).
\end{align*}
Since $\E(W_j^{\alpha+\varepsilon})<\infty$, $j=1,2$, and $\min(W_1/\E(W_1^\alpha)^{1/\alpha},W_2/\E(W_2^\alpha)^{1/\alpha})^{\alpha+\varepsilon} \leq [W_j/\E(W_j^\alpha)^{1/\alpha}]^{\alpha+\varepsilon}$ for $j=1,2$ we have $\E[\min(W_1/E(W_1^\alpha)^{1/\alpha},W_2/E(W_2^\alpha)^{1/\alpha})^{\alpha+\varepsilon}]<\infty$. By dominated convergence we therefore also have
\[
  \E\left(\min\left\{\frac{W_1}{\E(W_1^\alpha)^{1/\alpha}}[1+o(1)],\frac{W_2}{\E(W_2^\alpha)^{1/\alpha}}[1+o(1)]\right\}^\alpha\right) \to  \E[\min\{W_1^\alpha/E(W_1^\alpha),W_2^\alpha/E(W_2^\alpha)\}],
\]
and so as $q\to 1$
\begin{align*}
\P\left(R \min\left\{\frac{W_1}{\E(W_1^\alpha)^{1/\alpha}}[1+o(1)],\frac{W_2}{\E(W_2^\alpha)^{1/\alpha}}[1+o(1)]\right\}  \geq F_R^{-1}(q)\right) \sim \E\left[\min\left\{\frac{W_1^\alpha}{\E(W_1^\alpha)},\frac{W_2^\alpha}{\E(W_2^\alpha)}\right\}\right] \bar{F}_R(F_R^{-1}(q)),
\end{align*}
from which the result follows. For $\alpha=0$ we have $\bar{F}_{X_j}(x) \sim \bar{F}_R(x) \in\RV_0^{\infty}$, as well as $\bar{F}_{X_\wedge}(x) \sim \bar{F}_R(x)$. Using the bounds in~\eqref{eq:chiineq} and taking limits, we get $\chi_X=1$. As noted after the proposition, the conditions ensure $\chi_X>0$ and hence $\eta_X=1$.
\end{proof}


Before proceeding to the proofs of Propositions~\ref{prop:RGumbel} and~\ref{prop:RNegWeib}, Lemma~\ref{lem:taylor} provides detail on the tail behavior of $W$ and $W_\wedge$, whilst Lemma~\ref{lem:HashorvaThm3} is a reformulation of the relevant components of Theorem~3.1 of \citet{Hashorvaetal2010}, that will be repeatedly useful.

\begin{lemma}\label{lem:taylor}
Assume (Z1), (N1) and (N2). 
  \begin{enumerate}
      \item If $b_1=b_2=1$ then $\bar{F}_W(1-\cdot) \in \RV_{\alpha_W}^0$ with $\alpha_W=\alpha_Z \gamma$. 
    \item If $b_1<1$ then 
    \begin{enumerate}
    \item[a)] $\bar{F}_W(1-\cdot) - \P(W=1) \in \RV_{\alpha_W}^0$ with $\alpha_W=\gamma$. 
      \item[b)] In particular, if $b_1 < 1$, $\tau_1'(b_{1-}), \tau_2'(b_{2+}) \neq 0$, then $\bar{F}_W(1-s) - \P(W=1) = s \ell(s)$, with $\ell$ satisfying
    \begin{align}
      \label{eq:L} \lim_{s\to 0} \ell(s) = f_Z(b_1)/ \tau_1'(b_{1-}) - f_Z(b_2)/\tau_2'(b_{2+})\in (0,\infty),
    \end{align}
    where for $b_2 = 1$ we put $\tau_2'(b_{2+}) = \infty$. In this case, $\bar{F}_W(1-\cdot) - \P(W=1) \in\RV_{\alpha_W}^0$ with $\alpha_W=\gamma=1$.
    \end{enumerate}
  \item
    $\bar{F}_{W_\wedge}(\zeta(1-\cdot)) \in\RV_{1}^0$, with slowly varying function
    $\ell_{\wedge}$ satisfying 
    \begin{align}
      \label{eq:Ltilde}\lim_{s\to 0} \ell_{\wedge}(s) &= 2f_Z(1/2)\zeta/\tau_1'(1/2_-). 
    \end{align}
  \end{enumerate}
\end{lemma}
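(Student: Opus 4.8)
The plan is to reduce every tail event for $W$ and $W_\wedge$ to an event for $Z$ via the two monotone branches of $\tau$ (namely $\tau_1$ increasing on $[0,b_1]$ and $\tau_2$ decreasing on $[b_2,1]$), then to convert the regular variation of $\tau$ near its value-$1$ set, encoded in (N2), into regular variation of the relevant inverses, and finally to read off probabilities from the local behaviour of $f_Z$. Throughout I would invoke two standard facts: the asymptotic inverse of a function in $\RV_{1/\gamma}^0$ lies in $\RV_\gamma^0$, and composition of regularly varying functions (with the inner one tending to $0$) multiplies indices.

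For part (1), since $b_1=b_2=1$ the map $\tau=\tau_1$ is a strictly increasing bijection of $[0,1]$ onto $[0,1]$, so $\{W\ge 1-s\}=\{Z\ge \tau^{-1}(1-s)\}$ and $\bar{F}_W(1-s)=\bar{F}_Z(\tau^{-1}(1-s))$. Writing $g(t)=1-\tau(1-t)$, assumption (N2) gives $g\in\RV_{1/\gamma}^0$, hence the asymptotic inverse satisfies $1-\tau^{-1}(1-\cdot)\in\RV_\gamma^0$. Composing with $\bar{F}_Z(1-\cdot)\in\RV_{\alpha_Z}^0$ from (Z1) yields $\bar{F}_W(1-\cdot)\in\RV_{\alpha_Z\gamma}^0$, i.e.\ $\alpha_W=\alpha_Z\gamma$. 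Note that this step uses only regular variation of $\bar{F}_Z$, not continuity of $f_Z$.

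For part (2), when $b_1<1$ the set $I_\nu$ contributes the atom $\P(W=1)=\P(Z\in[b_1,b_2])$, and for small $s$ the remaining mass of $\{W\ge 1-s\}$ comes from two slivers, $Z\in[\tau_1^{-1}(1-s),b_1)$ and (if $b_2<1$) $Z\in(b_2,\tau_2^{-1}(1-s)]$. Inverting (N2) on each branch gives sliver lengths $b_1-\tau_1^{-1}(1-s)\in\RV_{\gamma_1}^0$ and $\tau_2^{-1}(1-s)-b_2\in\RV_{\gamma_2}^0$; using continuity and positivity of $f_Z$ at $b_1,b_2$, each sliver probability is asymptotically $f_Z(b_j)$ times its length, hence regularly varying of index $\gamma_1$, respectively $\gamma_2$, and their sum is dominated near $0$ by the term of smaller index. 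This gives $\bar{F}_W(1-\cdot)-\P(W=1)\in\RV_\gamma^0$ with $\gamma=\min(\gamma_1,\gamma_2)$ (and $\gamma=\gamma_1$ when $b_2=1$), proving (2a). For (2b) the extra hypothesis $\tau_1'(b_{1-}),\tau_2'(b_{2+})\neq0$ forces $\gamma_1=\gamma_2=1$, and a first-order Taylor expansion of $\tau_1$ at $b_1$ and $\tau_2$ at $b_2$ gives the exact lengths $s/\tau_1'(b_{1-})$ and $-s/\tau_2'(b_{2+})$; multiplying by $f_Z(b_1),f_Z(b_2)$ and summing yields $\bar{F}_W(1-s)-\P(W=1)=s\,\ell(s)$ with $\ell(s)\to f_Z(b_1)/\tau_1'(b_{1-})-f_Z(b_2)/\tau_2'(b_{2+})$. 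Since $\tau_1$ increases and $\tau_2$ decreases, both terms are strictly positive, so the limit lies in $(0,\infty)$, and the convention $\tau_2'(b_{2+})=\infty$ for $b_2=1$ drops the second term.

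For part (3), as a function of $Z$ the minimum $W_\wedge$ increases along $\tau_1$ on $[0,1/2]$ to its maximum $\zeta=\tau(1/2)$ at $Z=1/2$ and decreases symmetrically thereafter, so $\{W_\wedge\ge\zeta(1-s)\}=[z_-(s),1-z_-(s)]$ with $z_-(s)=\tau_1^{-1}(\zeta(1-s))\uparrow 1/2$. A first-order expansion of $\tau_1$ at $1/2$, whose left derivative exists by (N1) and is strictly positive (indeed $\tau'(1/2)=2\zeta$ by symmetry of $\nu$ where $\nu$ is smooth at $(1/2,1/2)$), gives $1/2-z_-(s)\sim \zeta s/\tau_1'(1/2_-)\in\RV_1^0$; then by continuity of $f_Z$ at $1/2$, $\bar{F}_{W_\wedge}(\zeta(1-s))=F_Z(1-z_-(s))-F_Z(z_-(s))\sim 2f_Z(1/2)\,(1/2-z_-(s))$, delivering index $1$ and $\ell_\wedge(s)\to 2f_Z(1/2)\zeta/\tau_1'(1/2_-)$. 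The main obstacle throughout is the passage from interval lengths to probabilities at leading order: extracting the factor $f_Z(\cdot)$ requires continuity (and positivity) of the density at the boundary points $b_1,b_2,1/2$, a mild strengthening of (Z1) implicit in the stated constants, since without it the sliver probabilities need not even be regularly varying; a secondary point needing care is the competition in (2a) between two regularly varying slivers of possibly different indices, where one must verify that the smaller index governs the sum as $s\to0$.
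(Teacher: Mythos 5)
Your proof is correct and takes essentially the same route as the paper's: the same reduction of the tail events for $W$ and $W_\wedge$ to slivers of $Z$ via the branches $\tau_1,\tau_2$, the same inversion of (N2) into regular variation of the inverse (the paper's Lemma~\ref{lem:gamleq1}) with composition of regularly varying functions for part~(1), and the same first-order expansions weighted by the density for parts~(2) and~(3), with your expansion of $\tau$ at $b_1,b_2,1/2$ being trivially equivalent to the paper's expansion of $\tau^{-1}$ at $1$ and $\zeta$. Your closing caveats --- that extracting the constants requires continuity of $f_Z$ at $b_1,b_2,1/2$, and that in (2a) the sum of two regularly varying slivers is governed by the smaller index --- correctly make explicit two steps the paper's proof uses implicitly.
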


\begin{lemma}[\citet{Hashorvaetal2010}]
\label{lem:HashorvaThm3}
Let $Y=RS$, where $R\in(0,r^\star)$, $r^\star\in(0,\infty]$, $S\in(0,1)$ and $\bar{F}_S(1-s) = \ell_S(s)s^{\alpha_S}$, $\ell_S \in \RV_0^0$, $\alpha_S>0$. Then
\begin{itemize}
 \item If $R$ is in the Gumbel MDA with auxiliary function $b$,
 \begin{align}
 \bar{F}_Y(x) \sim \Gamma(1+\alpha_S)\bar{F}_S(1-(xb(x))^{-1})\bar{F}_R(x), \qquad x \to r^\star. \label{eq:HashorvaGumbel} 
\end{align}
\item If $R$ is in the negative Weibull MDA,
 \begin{align}
 \bar{F}_Y(r^\star-s) \sim \frac{\Gamma(1+\alpha_S)\Gamma(1+\alpha_R)}{\Gamma(1+\alpha_S+\alpha_R)} \bar{F}_S(1-s/r^\star)\bar{F}_R(r^\star-s), \qquad s \to 0. \label{eq:HashorvaNegWeib} 
\end{align}
\end{itemize}
\end{lemma}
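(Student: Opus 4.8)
The plan is to handle both cases with a single device: express the survival function of the product $Y=RS$ as an integral of the tail of $R$ against the law of $S$, and show that the dominant mass comes from values of $S$ near its upper endpoint $1$. Using independence of $R$ and $S$, set $U=1-S$, whose distribution function satisfies $F_U(u)=\bar{F}_S(1-u)=\ell_S(u)u^{\alpha_S}\in\RV_{\alpha_S}^0$. In the Gumbel case I would start from
\[
\bar{F}_Y(x)=\int_0^1 \bar{F}_R\!\left(x/(1-u)\right)\,\mathrm{d}F_U(u),
\]
and perform the change of variables $u=v/\beta(x)$ with $\beta(x)=xb(x)\to\infty$. After dividing by $\bar{F}_R(x)F_U(1/\beta(x))$, the integrand becomes $\bar{F}_R(x/(1-v/\beta(x)))/\bar{F}_R(x)$ and the integrating measure becomes $F_U(\mathrm{d}v/\beta(x))/F_U(1/\beta(x))$.

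Next I would identify the two pointwise limits. Expanding $x/(1-v/\beta(x))=x+v\,a(x)(1+o(1))$ with $a=1/b$, the Gumbel MDA convergence $\bar{F}_R(x+t\,a(x))/\bar{F}_R(x)\to e^{-t}$ gives that the integrand tends to $e^{-v}$; regular variation of $F_U$ at $0$ gives $F_U(\mathrm{d}v/\beta(x))/F_U(1/\beta(x))\to \mathrm{d}(v^{\alpha_S})$ vaguely on $(0,\infty)$. Modulo interchange of limit and integral, this yields $\bar{F}_Y(x)/[\bar{F}_R(x)F_U(1/\beta(x))]\to\int_0^\infty e^{-v}\,\mathrm{d}(v^{\alpha_S})=\alpha_S\Gamma(\alpha_S)=\Gamma(1+\alpha_S)$, which is exactly \eqref{eq:HashorvaGumbel} since $F_U(1/\beta(x))=\bar{F}_S(1-(xb(x))^{-1})$. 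The negative Weibull case is structurally identical but with a Beta constant: here a value of $Y$ near $r^\star$ forces both $R$ near $r^\star$ and $S$ near $1$, giving $\bar{F}_Y(r^\star-s)=\int_0^{s/r^\star}\bar{F}_R((r^\star-s)/(1-u))\,\mathrm{d}F_U(u)$. Writing $r^\star-(r^\star-s)/(1-u)=(s-r^\star u)/(1-u)$ and substituting $u=(s/r^\star)w$, $w\in(0,1)$, regular variation of $\bar{F}_R$ at $r^\star$ gives $\bar{F}_R((r^\star-s)/(1-u))/\bar{F}_R(r^\star-s)\to(1-w)^{\alpha_R}$, while $F_U$ supplies the rescaled measure $\mathrm{d}(w^{\alpha_S})$ and normalizer $\bar{F}_S(1-s/r^\star)$. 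The limiting integral is $\int_0^1(1-w)^{\alpha_R}\,\mathrm{d}(w^{\alpha_S})=\alpha_S B(\alpha_S,1+\alpha_R)=\Gamma(1+\alpha_S)\Gamma(1+\alpha_R)/\Gamma(1+\alpha_S+\alpha_R)$, giving \eqref{eq:HashorvaNegWeib}.

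The hard part will be justifying the interchange of limit and integral, i.e.\ producing an integrable majorant uniform in $x$ (respectively $s$). For the Gumbel case with $r^\star=\infty$ this is precisely where \eqref{eq:DR} enters: for $u$ bounded away from $0$ one has $x/(1-u)=\lambda x$ with $\lambda>1$, and \eqref{eq:DR} forces $(xb(x))^\rho\bar{F}_R(\lambda x)/\bar{F}_R(x)\to 0$, which suppresses that part of the integral against the polynomially growing rescaled measure; for moderate $v$ I would combine the locally uniform convergence in the Gumbel MDA with Potter bounds for $F_U\in\RV_{\alpha_S}^0$ to obtain a majorant of the form $Ce^{-v/2}v^{\alpha_S-1}$. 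When $r^\star<\infty$ the integrand vanishes once $x/(1-u)>r^\star$, so the domain is automatically truncated and only the Potter control of $F_U$ is needed. The negative Weibull case is cleaner because the domain $w\in(0,1)$ is already bounded and both factors are regularly varying, so Potter bounds alone provide the majorant; the only delicate point is the uniformity of $\bar{F}_R(r^\star-s(1-w)(1+o(1)))/\bar{F}_R(r^\star-s)\to(1-w)^{\alpha_R}$ as $w\to 1$, which follows from the uniform convergence theorem for regularly varying functions.
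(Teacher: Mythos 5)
The paper offers no proof of this lemma: it is imported verbatim as a reformulation of Theorem~3.1 of \citet{Hashorvaetal2010}, so there is nothing internal to compare against, and your proposal amounts to a reconstruction of the argument behind the cited result. Your reconstruction is correct and is indeed the standard Abelian route: conditioning on $U=1-S$, rescaling $u=v/(xb(x))$ (Gumbel) or $u=(s/r^\star)w$ (negative Weibull), pointwise limits $e^{-v}$ and $(1-w)^{\alpha_R}$ from the MDA conditions, vague convergence of the normalized measure to $\d(v^{\alpha_S})$, and the limiting integrals $\Gamma(1+\alpha_S)$ and $\alpha_S B(\alpha_S,1+\alpha_R)$ giving exactly the constants in \eqref{eq:HashorvaGumbel} and \eqref{eq:HashorvaNegWeib}. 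You also correctly identify where the real work lies (dominated convergence) and the right tools: \eqref{eq:DR} to kill the region $u\geq\varepsilon$ against the polynomially growing normalizer, the locally uniform Gumbel convergence plus Potter bounds for the intermediate range, and the uniform convergence theorem for monotone regularly varying functions in the negative Weibull case.

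One small gloss is worth tightening: in the Gumbel case with $r^\star<\infty$, Potter control of $F_U$ alone does not suffice, contrary to what you suggest. Although the integrand ratio $\bar{F}_R(x/(1-u))/\bar{F}_R(x)$ is bounded by $1$, the rescaled measure has total mass of order $\{F_U((r^\star-x)/r^\star)/F_U(1/(xb(x)))\}$, which grows polynomially, so boundedness plus truncation is not an integrable majorant. You still need the uniform exponential-type bound $\bar{F}_R(x+t\,a(x))/\bar{F}_R(x)\leq C e^{-(1-\delta)t}$, valid on the truncated range $0\leq t\lesssim (r^\star-x)b(x)$, which comes from the von Mises representation \eqref{eq:GumbelMDA} together with the self-neglecting property of $a=1/b$; one also uses $(r^\star-x)b(x)\to\infty$ so that the truncated $v$-domain expands to $(0,\infty)$, exactly as in the $r^\star=\infty$ case. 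This is a routine repair, not a structural flaw, and with it your argument reproduces the cited theorem in full.
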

\begin{remark}
\label{rmk:diffendpoint}
 Lemma~\ref{lem:HashorvaThm3} is easily extended to the case where $S$ has upper endpoint $s^\star \in (0,\infty)$, by writing $RS = s^\star R \times S/s^\star$, and noting that: (i) if $R$ is in the Gumbel MDA, with upper endpoint $r^\star$ and auxiliary function $b(t)$, then $s^\star R$ is in the Gumbel MDA with upper endpoint $s^\star r^\star$ and auxiliary function $b(t/s^\star)/s^\star$; (ii) if $R$ is in the negative Weibull MDA, so is $s^\star R$, with $\bar{F}_{s^\star R}(s^\star r^\star - s) = \bar{F}_{R}(r^\star - s/s^\star)$.
\end{remark}

\begin{proof}[Proof of Proposition~\ref{prop:RGumbel}]

For the marginal tail $\bar F_X$ of $X_1$ and $X_2$, we have
 \begin{align*}
 \bar{F}_X(x) &= \int_{[0,1]} \bar{F}_R(x/v) \d F_{W}(v) =\bar{F}_R(x)\P(W=1) + \int_{[0,1)}\bar{F}_R(x/v) \d F_{W}(v),
\end{align*}
with $\P(W=1) = \P(Z \in I_{\nu})$.  Letting $S=W|W<1$, $\bar{F}_S(1-s)=\ell(s)s^{\alpha_W}$, $s\to 0$, where $\alpha_W$ is given by the relevant part of Lemma~\ref{lem:taylor}, and $\ell \in \RV_0^0$. By Lemma~\ref{lem:HashorvaThm3}, we have 
\begin{align}
 \bar{F}_X(x) - \bar{F}_R(x)\P(W=1) \sim \Gamma(1+\alpha_W) \frac{\ell(\{xb(x)\}^{-1})}{\{xb(x)\}^{\alpha_W}}\bar{F}_R(x),\qquad x\to r^\star. \label{eq:marg}
\end{align}
We always have $xb(x)\to\infty$ as $x \to r^\star$, \citep[][Lemma 1.2]{Hashorvaetal2010,Resnick1987}, hence if $\P(W=1) = 0$, $\bar{F}_X(x) = o(\bar{F}_R(x))$, $x\to r^\star$, whilst if $\P(W=1)>0$, $\bar{F}_X(x) \sim \P(W=1)\bar{F}_R(x)$, $x\to r^\star$.

For the joint distribution, $\bar{F}_{X_\wedge}(x) = \P(RW_\wedge \geq x)$, with $W_\wedge \in [0,\zeta]$ for $\zeta \in[1/2,1]$. By Lemma~\ref{lem:taylor}, $\bar{F}_{W_\wedge}(\zeta(1-s)) = \ell_{\wedge}(s)s$, $s\to 0$, whilst by Lemma~\ref{lem:HashorvaThm3} and Remark~\ref{rmk:diffendpoint},
\begin{align*}
 \bar{F}_{X_\wedge}(x) \sim \Gamma(2) \ell_{\wedge}(\{(x/\zeta)b(x/\zeta)\}^{-1})\{(x/\zeta)b(x/\zeta)\}^{-1} \bar{F}_{R}(x/\zeta),\qquad x\to \zeta r^\star. 
\end{align*}
Therefore,
\begin{align}
\chi_X = \lim_{x\to r^\star} \frac{\bar{F}_{X_\wedge}(x)}{\bar{F}_{X}(x)} =\lim_{x \to r^\star} \frac{\Gamma(2) \ell_{\wedge}(\{(x/\zeta)b(x/\zeta)\}^{-1})\{(x/\zeta)b(x/\zeta)\}^{-1}\bar{F}_R(x/\zeta)}{\bar{F}_R(x)\P(W=1) + \Gamma(1+\alpha_W) \ell(\{xb(x)\}^{-1})\{xb(x)\}^{-\alpha_W}\bar{F}_R(x)}, \label{eq:Gumbel_chi}
\end{align}
whilst for $\zeta r^\star = r^\star$, i.e., $\zeta = 1$ or $r^\star = \infty$,
\begin{align}
\eta_X = \lim_{x\to r^\star} \frac{\log \bar{F}_{X}(x)}{\log\bar{F}_{X_\wedge}(x)} &=\lim_{x \to r^\star} \frac{\log \left[\bar{F}_R(x)\P(W=1) + \Gamma(1+\alpha_W) \ell(\{xb(x)\}^{-1})\{xb(x)\}^{-\alpha_W}\bar{F}_R(x)\right] }{ \log \left[\Gamma(2) \ell_{\wedge}(\{(x/\zeta)b(x/\zeta)\}^{-1})\{(x/\zeta)b(x/\zeta)\}^{-1}\bar{F}_R(x/\zeta)\right]} \notag \\ &= \lim_{x\to r^\star} \frac{\log \bar{F}_R(x)}{\log\bar{F}_R(x/\zeta)}. \label{eq:Gumbel_eta}
\end{align}
The latter equality follows by definition~\eqref{eq:GumbelMDA} and l'H\^opital's rule, providing $\lim_{x\to r^\star} \log(x b(x)) / \log\bar{F}_R(x) = 0$.
Now if $\zeta<1$ and $r^\star = \infty$, then $\chi_X = 0$ by~\eqref{eq:DR}, with $\eta_X$ given by~\eqref{eq:Gumbel_eta}. If $r^\star<\infty$, $X_\wedge$ has upper bound $\zeta r^\star < r^\star$ and so $\chi_X = 0$ also, and $\eta_X$ is not defined.

If $\zeta=1$ then $\chi_X = 0$ if $\P(W=1) > 0$ by~\eqref{eq:Gumbel_chi}, whilst $\eta_X = 1$ by~\eqref{eq:Gumbel_eta}. If $\P(W=1)=0$ then $b_1=b_2=1/2$, with Lemma~\ref{lem:taylor} giving $\alpha_W=1$ and
\begin{align*}
\chi_X = \lim_{x \to r^\star} \frac{ \ell_{\wedge}(\{xb(x)\}^{-1})}{\ell(\{xb(x)\}^{-1})} = \frac{2f_Z(1/2)/\tau_1'(1/2_-)}{f_Z(1/2)/ \tau_1'(1/2_{-}) - f_Z(1/2)/\tau_2'(1/2_{+})}.
\end{align*}
The function $\tau$ is not differentiable at $1/2$ when $\zeta=1$ so $\tau_1'(1/2_-) \neq \tau_2'(1/2_+)$ and rearrangement gives the result.
\end{proof}


\begin{proof}[Proof of Proposition~\ref{prop:RNegWeib}]
 Following a similar line to Proposition~\ref{prop:RGumbel}, the marginal distribution satisfies
 \begin{align*}
 \bar{F}_X(r^\star-s) - \bar{F}_R(r^\star-s)\P(W=1) &\sim \frac{\Gamma(1+\alpha_W)\Gamma(1+\alpha_R)}{\Gamma(1+\alpha_W+\alpha_R)} \ell (s/r^\star)(s/r^\star)^{\alpha_W}\bar{F}_R(r^\star-s)\\
 &\sim \tilde{\ell}(s) s^{\alpha_W + \alpha_R},\qquad s\to 0,
\end{align*}
for a new slowly varying function $\tilde{\ell}$, whilst for the joint distribution
\begin{align*}
  \bar{F}_{X_\wedge}(\zeta r^\star-s) &\sim \frac{\Gamma(2)\Gamma(1+\alpha_R)}{\Gamma(2+\alpha_R)} \ell_{\wedge} (s/(\zeta  r^\star))(s/(\zeta r^\star))\bar{F}_R(r^\star-s/\zeta)\\
  &\sim \tilde{\ell}_{\wedge}(s) s^{1 + \alpha_R}, \qquad s\to 0.
\end{align*}

If $\zeta <1$, then $X_\wedge$ has upper bound $\zeta r^\star < r^\star$ and so $\chi_X = 0$, with $\eta_X$ not defined. If $\zeta=1$ and $\P(W=1)> 0$, then $\chi_X = 0$, and
\begin{align*}
\eta_X = \lim_{s \to 0} \frac{\log\bar{F}_X(r^\star-s)}{\log\bar{F}_{X_\wedge}(r^\star-s)} = \lim_{s \to 0} \frac{\log\bar{F}_R(r^\star-s)}{\log\{s\bar{F}_{R}(r^\star-s)\}} = \alpha_R/(1+\alpha_R).
\end{align*}
Otherwise, if $\zeta=1$ and $\P(W=1)=0$, then since $\alpha_W = 1$
\begin{align*}
\bar{F}_{X_\wedge}(r^\star-s) / \bar{F}_{X}(r^\star-s) \sim \ell_{\wedge}(s/r^\star) / \ell(s/r^\star), \qquad s\to 0,
\end{align*}
 with the behavior of $\ell$ and $\ell_{\wedge}$ as given in~\eqref{eq:L} and~\eqref{eq:Ltilde}.
\end{proof}


\subsection{Proofs for Section~\ref{sec:Unconstrained}}
We often write $S=\log R$ and $V=\log W$ and $V_\wedge=\log W_\wedge$ in the following. We denote the common marginal distribution of $X_1$ and $X_2$ by $F_X$. We first recall a result on the convolution with a convolution-equivalent distribution. 

\begin{lemma}[Convolution with a distribution in $\mathrm{CE}_\alpha$, see Theorem~1 of \citet{Cline.1986} and Lemma~5.1 of \citet{Pakes.2004}]\label{lem:conveq}
	Let $Y_1\sim F_1$, $Y_2\sim F_2$ be two random variables. If $F_1\in\mathrm{CE}_\alpha$ with $\alpha \geq 0$ and 
	$\overline{F}_2(x) / \overline{F}_1(x) \rightarrow c\geq 0$, $x\rightarrow\infty$, 
    then 
	\begin{equation}\label{eq:convtail}
	\overline{F_1\star F_2}(x) /\overline{F}_1(x)\rightarrow \E\left(e^{\alpha Y_2}\right)+c\E\left(e^{\alpha Y_1}\right), \qquad x\rightarrow\infty. 
	\end{equation}
\end{lemma}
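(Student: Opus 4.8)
The plan is to study the ratio $\overline{F_1\star F_2}(x)/\bar F_1(x)$ directly, where $\overline{F_1\star F_2}(x)=\P(Y_1+Y_2>x)$ with $Y_1,Y_2$ independent. Two preliminaries are needed. First, $F_1\in\mathrm{CE}_\alpha\subseteq\mathrm{ET}_\alpha$ gives the pointwise limit $\bar F_1(x-y)/\bar F_1(x)\to e^{\alpha y}$ as $x\to\infty$ for each fixed $y$, together with the uniform bound $\bar F_1(x-y)\le\bar F_1(x-K)$ for all $y\le K$ by monotonicity, so that on $\{y\le K\}$ the ratio $\bar F_1(x-y)/\bar F_1(x)$ stays bounded for large $x$; this will justify dominated convergence. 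Second, the defining property of $\mathrm{CE}_\alpha$ yields $\E(e^{\alpha Y_1})<\infty$, whence $e^{\alpha x}\bar F_1(x)\le\int_{(x,\infty)}e^{\alpha y}\,dF_1(y)\to 0$, and, combined with $\bar F_2\sim c\bar F_1$ (so $\bar F_2(x)\le C\bar F_1(x)$ for $x$ large), this gives $\E(e^{\alpha Y_2})<\infty$, so the target limit is finite.

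Next I would fix a truncation level $K$ and, for $x>2K$, write
\begin{equation*}
\P(Y_1+Y_2>x)=\underbrace{\int_{(-\infty,K]}\bar F_1(x-y)\,dF_2(y)}_{(\mathrm{I})}+\underbrace{\int_{(-\infty,K]}\bar F_2(x-y)\,dF_1(y)}_{(\mathrm{II})}+\underbrace{\P(Y_1>K,Y_2>K,Y_1+Y_2>x)}_{(\mathrm{III})},
\end{equation*}
the overlap event $\{Y_1\le K,Y_2\le K\}$ being empty once $x>2K$. Dividing by $\bar F_1(x)$ and letting $x\to\infty$, dominated convergence (using the bounds above) gives $(\mathrm{I})/\bar F_1(x)\to\int_{(-\infty,K]}e^{\alpha y}\,dF_2(y)$, while writing $\bar F_2(x-y)=\{\bar F_2(x-y)/\bar F_1(x-y)\}\bar F_1(x-y)$ and invoking $\bar F_2/\bar F_1\to c$ gives $(\mathrm{II})/\bar F_1(x)\to c\int_{(-\infty,K]}e^{\alpha y}\,dF_1(y)$. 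Letting $K\to\infty$ afterwards sends these to $\E(e^{\alpha Y_2})$ and $c\,\E(e^{\alpha Y_1})$ respectively, which are exactly the two terms of the assertion.

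The crux, and the step I expect to be the main obstacle, is showing that the ``both large'' contribution is asymptotically negligible, i.e.\ $\limsup_{x\to\infty}(\mathrm{III})/\bar F_1(x)\to 0$ as $K\to\infty$; this is where convolution-equivalence, rather than mere membership of $\mathrm{ET}_\alpha$, is indispensable. The plan is first to extract the estimate from the symmetric convolution: applying the same decomposition to $\overline{F_1\star F_1}$ and using the definition $\overline{F_1\star F_1}(x)/\bar F_1(x)\to 2\E(e^{\alpha Y_1})$ forces the symmetric term $R_1(x,K)=\P(Y_1>K,Y_1'>K,Y_1+Y_1'>x)$ to satisfy $R_1(x,K)/\bar F_1(x)\to 2\int_{(K,\infty)}e^{\alpha y}\,dF_1(y)\to 0$ as $K\to\infty$. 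I would then transfer this to the mixed term by writing $(\mathrm{III})=\int_{(K,\infty)}\bar F_1((x-y)\vee K)\,dF_2(y)$, integrating by parts so that $F_2$ enters only through its tail $\bar F_2$, and bounding $\bar F_2\le C\bar F_1$ on $(K,\infty)$; this dominates $(\mathrm{III})$ by $C\,R_1(x,K)$ plus a boundary term of order $e^{\alpha K}\bar F_2(K)\le C e^{\alpha K}\bar F_1(K)\to 0$. Making this domination fully rigorous — controlling the tail ratio uniformly over the whole integration range, and handling the case $c=0$ where $F_2$ is strictly lighter — is the delicate part, and is precisely the content of the cited results of Cline and Pakes. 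Assembling the three pieces yields $\overline{F_1\star F_2}(x)/\bar F_1(x)\to\E(e^{\alpha Y_2})+c\,\E(e^{\alpha Y_1})$.
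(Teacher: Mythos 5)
Your proposal is correct, but there is no internal proof to compare it against: the paper states this lemma as a quoted result, deferring entirely to Theorem~1 of \citet{Cline.1986} and Lemma~5.1 of \citet{Pakes.2004}. What you have written is in effect a correct self-contained reconstruction of the standard argument behind those citations. Your three-term decomposition for $x>2K$, the dominated-convergence treatment of $(\mathrm{I})$ (with the dominating constant $\bar{F}_1(x-K)/\bar{F}_1(x)\to e^{\alpha K}$), and the uniform convergence $\bar{F}_2(x-y)/\bar{F}_1(x-y)\to c$ over $y\le K$ in $(\mathrm{II})$ are all sound. More importantly, your handling of the ``both large'' term $(\mathrm{III})$ is not merely a pointer to the literature, as you suggest, but is essentially complete as sketched: extracting $R_1(x,K)/\bar{F}_1(x)\to 2\int_{(K,\infty)}e^{\alpha y}\,\mathrm{d}F_1(y)$ from the defining $\mathrm{CE}_\alpha$ limit applied to $\overline{F_1\star F_1}$ is legitimate because the truncated terms in that symmetric decomposition converge, forcing the remainder to converge as well; and the integration-by-parts transfer gives exactly $(\mathrm{III})\le \bar{F}_1(x-K)\bar{F}_2(K)+C\,R_1(x,K)$ once $\bar{F}_2\le C\bar{F}_1$ on $(K,\infty)$. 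That eventual domination holds for every $c\ge 0$ with $C=c+\varepsilon$, so the case $c=0$ that worries you is if anything easier, not harder. The $\liminf$/$\limsup$ sandwich in $x$ followed by $K\to\infty$ then closes the argument, using monotonicity in $K$ of the truncated integrals and the vanishing bounds $e^{\alpha K}\bar{F}_2(K)\le Ce^{\alpha K}\bar{F}_1(K)\to 0$ and $2C\int_{(K,\infty)}e^{\alpha y}\,\mathrm{d}F_1(y)\to 0$.

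Two cosmetic points only. First, the paper's convention $\bar{F}_Q(q)=\P(Q\geq q)$ rather than $\P(Q>q)$ is immaterial here, since within $\mathrm{ET}_\alpha$ the ratio limit forces $\P(Q>q)\sim\P(Q\geq q)$ as $q\to\infty$. Second, your finiteness claim $\E(e^{\alpha Y_2})<\infty$ is most cleanly justified via the identity $\E(e^{\alpha Y})=\alpha\int_{-\infty}^{\infty}e^{\alpha y}\bar{F}(y)\,\mathrm{d}y$ (for $\alpha>0$; trivial for $\alpha=0$) combined with the eventual bound $\bar{F}_2\le C\bar{F}_1$, rather than from the Markov-type inequality alone, which only gives $e^{\alpha x}\bar{F}_1(x)\to 0$.
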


\begin{proof}[Proof of Proposition \ref{prop:RSHT}]\ \\
1.~The assumption $\overline{F}_{W}(x)/\overline{F}_{R}(x)\rightarrow c$, $x\rightarrow \infty$, is equivalent to $\overline{F}_{V}(x)/\overline{F}_{S}(x)\rightarrow c$. We then apply \eqref{eq:convtail} and obtain $\bar{F}_{X} \sim (1+c)\bar{F}_{R}$. Since $\bar{F}_{V_\wedge}(x)/\bar{F}_{V}(x)\rightarrow \chi_W$, $x\rightarrow\infty$, we get $\bar{F}_{X_\wedge}(x)=\bar{F}_{S+V_\wedge}(\log x)\sim (1+c\chi_W)\bar{F}_S(\log x)=(1+c\chi_W)\bar{F}_R(x)$, and the value $\chi_X$ in \eqref{eq:chiSHT} follows. Since $\chi_X>0$ in all cases, we have $\eta_X=1$. \\
\noindent
2.~Let $F_V\in\mathrm{CE}_0$ and $\overline{F}_R=o(\overline{F}_{W})$. Then, $\overline{F}_S=o(\overline{F}_{V})$, and    \eqref{eq:convtail} gives  $\bar{F}_{X}\sim \bar{F}_{W}$. If $\chi_W>0$, then $\overline{F}_S=o(\overline{F}_{V_\wedge})$, and \eqref{eq:convtail} yields the tail equivalence $\bar{F}_{X_\wedge}(x)\sim \bar{F}_{W_\wedge}(x)$, which entails $\chi_X=\chi_W$. In the case where $\chi_W=0$, we have  $\bar{F}_{V_\wedge}=o(\bar{F}_{V})$ and $\bar{F}_{S}=o(\bar{F}_{V})$, such that \citet[][Corollary 5(ii)]{Fossetal.2009} establishes $\bar{F}_{X_\wedge}\sim \bar{F}_{R}+\bar{F}_{W_\wedge}+o(\bar{F}_{W})$, and $\chi_X=\chi_W=0$ follows.\\
2.a) When further $F_S,F_{V_\wedge}\in\mathrm{CE}_0$ and $\overline{F}_R(x)/\overline{F}_{W_\wedge}(x)\leq C$ with $C>0$ as $x\rightarrow\infty$, we consider the two boundary cases $\overline{F}_R=o(\overline{F}_{W_\wedge})$ and $\overline{F}_R/\overline{F}_{W_\wedge}\sim C$. Using \eqref{eq:convtail}, we get
\begin{align*}
\frac{\log \bar{F}_{X}(x)}{\log\bar{F}_{X_\wedge}(x)}\sim \frac{\log \bar{F}_{W}(x)}{\log(\bar{F}_{R}(x)+\bar{F}_{W_\wedge}(x))}&= \frac{\log \bar{F}_{W}(x)}{\log \bar{F}_{W_\wedge}(x)+\log\left(1+\frac{\bar{F}_{R}(x)}{\bar{F}_{W_\wedge} (x)}\right)} \\
&\sim  \frac{\log \bar{F}_{W}(x)}{\log \bar{F}_{W_\wedge}(x)} \sim \eta_W, \qquad x\rightarrow \infty, 
\end{align*}
in both cases, which proves $\eta_X=\eta_W$. \\
2.b) When further $F_S\in \mathrm{CE}_0$ and $\overline{F}_{W_\wedge}=o(\overline{F}_R)$, we get $\bar{F}_{X_\wedge}(x)\sim \bar{F}_{R}(x)$ from \eqref{eq:convtail}, such that 
$$
\log \bar{F}_{X}(x) / \log \bar{F}_{X_\wedge}(x)\sim \log \bar{F}_{W}(x) / \log \bar{F}_{R}(x), \qquad x\rightarrow \infty, 
$$
and the limit is $\eta_X$ if it exists. 
\end{proof}


\begin{proof}[Proof of Proposition~\ref{prop:WRV}]

  Since $\E(R^{\alpha_W+\varepsilon})<\infty$ for a small $\varepsilon>0$, by  Breiman's lemma,
  the marginal distributions satisfy 
 \begin{align*}
  \bar{F}_{X}(x)\sim \E(R^{\alpha_W})\bar{F}_{W}(x),\qquad x\to\infty, 
 \end{align*}
 so that $X$ is also regularly varying with index $\alpha_W$. 
The coefficient $\eta_W$ is not defined if $W_\wedge$ has a finite upper endpoint, in which case $\E(W_{\wedge}^{\alpha_R+\varepsilon})<\infty$. Otherwise, we have that $W_\wedge$ is regularly varying with index $\alpha_W / \eta_W$, or if $\eta_W=0$, $\bar{F}_{W_\wedge}$ decays faster than any power. Again, by   Breiman's lemma \citep{Breiman1965}, we obtain that 
 \begin{align*}
   \bar{F}_{X_\wedge}(x)
   &\sim 
   \begin{cases}
     \E(W_\wedge^{\alpha_R})\bar{F}_{R}(x) , \quad & \text{if } \alpha_R  < \alpha_W / \eta_W, \eta_W=0 \mbox{ or $\eta_W$ not defined},\\
     \E(R^{\alpha_W / \eta_W})\bar{F}_{W_\wedge}(x) , \quad & \text{if } \alpha_R  > \alpha_W / \eta_W \text{ or } \alpha_R = +\infty.
   \end{cases}
\end{align*}
If $\chi_W>0$ then $\eta_W=1$, so we conclude that $\chi_X = \lim_{x\to \infty} \bar{F}_{X_\wedge}(x) / \bar{F}_{X}(x) = \chi_W$. 
For the coefficient of residual tail dependence between $X_1$ and $X_2$ we obtain
 \begin{align*}
 1 / \eta_X &= \lim_{x\to\infty}\log \bar{F}_{X_\wedge}(x)/\log\bar{F}_{X}(x)\\
 &=
   \begin{cases}
     \lim_{x\to\infty} \log \bar{F}_{R}(x) / \log \bar{F}_{W}(x) = \alpha_R/\alpha_W , \quad & \text{if }   \alpha_R  < \alpha_W / \eta_W, \mbox{ or $\eta_W$ not defined},\\
      \lim_{x\to\infty} \log \bar{F}_{W_\wedge}(x) / \log \bar{F}_{W}(x) =  1 /\eta_W  , \quad & \text{if }  \alpha_R  > \alpha_W / \eta_W \text{ or } \alpha_R = +\infty.
   \end{cases}
\end{align*}
\end{proof}
Before the proof of Proposition~\ref{prop:samealpha}, we recall two lemmas from the literature.
\begin{lemma}[Convolution of distributions in $\mathrm{ET}_{\alpha,\beta>-1}$, see Theorem~4(v) of \citet{Cline.1986}]\label{lem:gamma}
	For two distributions $F_i\in\mathrm{ET}_{\alpha,\beta_i}$, $i=1,2$, possessing gamma-type tail
	\begin{equation*}
	\overline{F}_i(x) \sim \ell_i(x) x^{\beta_i}\exp(-\alpha x), \quad x\rightarrow\infty \quad \alpha>0,\ \beta_i>-1, \quad i=1,2,
	\end{equation*}
	with slowly varying $\ell_i(x)$, 
	we get 
	\begin{equation*}
	\overline{F_1\star F_2}(x)\sim  \alpha \frac{\Gamma(\beta_1+1)\Gamma(\beta_2+1)}{\Gamma(\beta_1+\beta_2+2)} \ell_1(x)\ell_2(x)x^{\beta_1+\beta_2+1}\exp(-\alpha x), \quad x\rightarrow\infty.  
	\end{equation*}
\end{lemma}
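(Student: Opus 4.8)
The plan is to work at the level of the convolution \emph{density}, where the gamma-type structure becomes transparent, and then integrate once to pass back to the survival function. Since the lemma postulates only tail asymptotics for the survival functions, I would first note that the gamma-type tail $\overline{F}_i(x)\sim\ell_i(x)x^{\beta_i}e^{-\alpha x}$ propagates to an ultimately monotone density with $f_i(x)\sim\alpha\,\ell_i(x)x^{\beta_i}e^{-\alpha x}$, the dominant contribution to $-\overline{F}_i'$ coming from differentiating the exponential factor; this is standard for the $\mathrm{ET}_{\alpha,\beta}$ class, and when no density is available one reduces to Cline's smoothing argument or runs the same three-region estimate directly on $\int\overline{F}_1(x-y)\,\d F_2(y)$. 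The conceptual point, which also explains why the hypothesis $\beta_i>-1$ is indispensable, is that for equal exponential rates the event $\{Y_1+Y_2>x\}$ is dominated by the \emph{diagonal} configurations in which $Y_1$ and $Y_2$ are \emph{simultaneously} of order $x$, rather than by a single large summand as in the convolution-equivalent regime of Lemma~\ref{lem:conveq}.

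For the calculation I would write $f_{1\star2}(x)=\int_{\mathbb{R}}f_1(x-y)f_2(y)\,\d y$ and substitute $y=xt$, giving $f_{1\star2}(x)=x\int f_1(x(1-t))f_2(xt)\,\d t$. On the bulk $t\in(0,1)$ insert the density asymptotics; the two exponentials multiply to the single factor $e^{-\alpha x}$, which pulls out of the integral together with $x^{\beta_1+\beta_2+1}$, leaving
\[
f_{1\star2}(x)\sim\alpha^2 x^{\beta_1+\beta_2+1}e^{-\alpha x}\int_0^1\ell_1(x(1-t))\ell_2(xt)\,(1-t)^{\beta_1}t^{\beta_2}\,\d t .
\]
For each fixed $t\in(0,1)$ slow variation gives $\ell_1(x(1-t))\ell_2(xt)\to\ell_1(x)\ell_2(x)$, so the integral should tend to $\ell_1(x)\ell_2(x)\int_0^1(1-t)^{\beta_1}t^{\beta_2}\,\d t=\ell_1(x)\ell_2(x)\,\Gamma(\beta_1+1)\Gamma(\beta_2+1)/\Gamma(\beta_1+\beta_2+2)$, the Beta integral. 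A final integration over $[x,\infty)$ converts the density asymptotic into the survival asymptotic with a gain of a factor $1/\alpha$, since $\int_x^\infty \ell(u)u^{\gamma}e^{-\alpha u}\,\d u\sim\alpha^{-1}\ell(x)x^{\gamma}e^{-\alpha x}$ by one integration by parts; combining the factors $\alpha^2$ and $1/\alpha$ produces exactly the stated constant $\alpha\,\Gamma(\beta_1+1)\Gamma(\beta_2+1)/\Gamma(\beta_1+\beta_2+2)$.

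The main obstacle is justifying that the slowly varying factors may be replaced by $\ell_i(x)$ \emph{inside} the integral, uniformly in $t$. Pointwise convergence fails near the endpoints $t=0$ and $t=1$, where $xt$ or $x(1-t)$ stays bounded, so I would control this with Potter's bounds: for any small $\delta>0$ there exist $C,x_0$ with $\ell_i(xs)/\ell_i(x)\le C\max(s^{\delta},s^{-\delta})$ whenever $xs\ge x_0$. This yields the dominating integrand $C'(1-t)^{\beta_1-\delta}t^{\beta_2-\delta}$, integrable on $(0,1)$ precisely because $\beta_i>-1$ (choose $\delta\in(0,\min(\beta_1,\beta_2)+1)$), so dominated convergence legitimizes the exchange; the two shrinking endpoint neighborhoods where $xt<x_0$ or $x(1-t)<x_0$ must be estimated separately and shown negligible, again using $\beta_i>-1$. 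This endpoint integrability is the quantitative heart of the proof, and it is exactly what breaks when $\beta_i\le-1$, where the endpoint (single-big-jump) regions dominate instead and one recovers Lemma~\ref{lem:conveq}.
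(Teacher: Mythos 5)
Your proposal is correct in substance but takes a genuinely different route from the paper. The paper's proof of Lemma~\ref{lem:gamma} is two lines: it invokes Theorem~4(v) of \citet{Cline.1986} for nonnegative summands and then extends to distributions with mass on $(-\infty,0)$ via Lemma~\ref{lem:negvals}\eqref{lemnegvalitem1}, whose hypothesis $\bar{F_i^+}(x)/\bar{F_1^+\star F_2^+}(x)\to 0$ holds here exactly because $\ell_j(x)x^{\beta_j+1}\to\infty$ when $\beta_j>-1$. You instead reconstruct the content of the cited theorem from scratch: the substitution $y=xt$, the factorization $e^{-\alpha x(1-t)}e^{-\alpha xt}=e^{-\alpha x}$, the Beta integral $\int_0^1(1-t)^{\beta_1}t^{\beta_2}\,\d t=\Gamma(\beta_1+1)\Gamma(\beta_2+1)/\Gamma(\beta_1+\beta_2+2)$, Potter bounds with $\delta<\min(\beta_1,\beta_2)+1$ to legitimize replacing $\ell_1(x(1-t))\ell_2(xt)$ by $\ell_1(x)\ell_2(x)$, and a final tail integration trading $\alpha^2$ for $\alpha$. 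The constants all come out right, and your diagnosis of where $\beta_i>-1$ enters --- endpoint (single-big-jump) regions negligible here, dominant in the convolution-equivalent regime of Lemma~\ref{lem:conveq} --- is exactly the correct dividing line. What the paper's route buys is brevity and borrowed rigor; what yours buys is a self-contained argument that makes visible why each hypothesis is needed.

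Two steps in your sketch are softer than the rest. First, the opening reduction is not a theorem: $\overline{F}_i(x)\sim\ell_i(x)x^{\beta_i}e^{-\alpha x}$ implies neither that a density exists nor, when one does, that $f_i(x)\sim\alpha\,\ell_i(x)x^{\beta_i}e^{-\alpha x}$, since asymptotic equivalences cannot be differentiated. You flag this and name the right repair --- run the same three-region estimate directly on $\overline{F_1\star F_2}(x)=\int\overline{F}_1(x-y)\,\d F_2(y)$, using Karamata-type estimates in place of the pointwise density asymptotics --- but that version is the actual proof (and is essentially what Cline does); as written, the density computation is a heuristic standing in for it. Second, your Potter-bound domination is set up only on $t\in(0,1)$; for distributions supported on all of $\mathbb{R}$ the regions $t<0$ and $t>1$ involve values of $F_1$ or $F_2$ about which the hypotheses say nothing, and they must be bounded separately. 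This is routine --- since $e^{\alpha y}\le 1$ for $y\le 0$, these contributions are $O(\ell_1(x)x^{\beta_1}e^{-\alpha x})$, which is $o(\ell_1(x)\ell_2(x)x^{\beta_1+\beta_2+1}e^{-\alpha x})$ again because $\beta_2>-1$ --- but it is precisely the bookkeeping the paper delegates to Lemma~\ref{lem:negvals}, so it should be stated rather than absorbed into the endpoint analysis of the unit interval.
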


\begin{lemma}[Ratio of convolutions with a distribution in $\mathrm{ET}_{\alpha}$]\label{lem:etratio}
Let $F\in \mathrm{ET}_\alpha$ with $\alpha>0$, and let $G_1,G_2$ be distributions satisfying $\bar{G}_2(x)/\bar{G}_1(x)\rightarrow c$, $x\rightarrow\infty$, with $c\geq 0$. Given a distribution $H$, we write $M_{H}(\alpha)=\int \exp(\alpha x) H(\mathrm{d}x)\in(0,\infty]$. 
\begin{enumerate}
\item If $G_1\in\mathrm{ET}_\alpha$, $c>0$ and  $M_{F}(\alpha)=M_{G_1}(\alpha)=\infty$, then
\begin{equation*}
\bar{F\star G_2}\sim c\,\bar{F\star G_1}, \qquad x\rightarrow\infty.
\end{equation*}
\item If  $M_{G_1}(\alpha)<\infty$ and $\bar{F\star G_1}\sim M_{G_1}(\alpha) \bar{F}$, then
\begin{equation*}
\bar{F\star G_2} / \bar{F\star G_1}\sim M_{G_2}(\alpha) / M_{G_1}(\alpha), \qquad x\rightarrow\infty.
\end{equation*}
\end{enumerate}
\end{lemma}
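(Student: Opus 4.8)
The plan is to treat both parts through the representation $\overline{F\star G}(x)=\int_{\mathbb R}\bar F(x-y)\,\mathrm{d}G(y)$ together with its symmetric counterpart $\overline{F\star G}(x)=\int_{\mathbb R}\bar G(x-z)\,\mathrm{d}F(z)$, exploiting that $F\in\mathrm{ET}_\alpha$ forces $\bar F(x-y)/\bar F(x)\to e^{\alpha y}$ locally uniformly in $y$ (the uniform convergence theorem applied to $\bar F(\log\cdot)\in\RV^\infty_{-\alpha}$). Since $F$ is only assumed exponential-tailed and not convolution-equivalent, Lemma~\ref{lem:conveq} is not directly available, and the asymptotics must be extracted by hand; the regimes $M_F(\alpha)=\infty$ (part 1) and $M_{G_1}(\alpha)<\infty$ (part 2) are genuinely different and I would handle them separately.

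For part 2, I would first reduce the claim to showing $\overline{F\star G_2}(x)\sim M_{G_2}(\alpha)\bar F(x)$, since dividing by the assumed equivalence $\overline{F\star G_1}\sim M_{G_1}(\alpha)\bar F$ then yields the stated ratio; note $M_{G_2}(\alpha)<\infty$ follows from $M_{G_1}(\alpha)<\infty$ and $\bar G_2\sim c\bar G_1$. Writing $\overline{F\star G_2}(x)/\bar F(x)=\int \bar F(x-y)/\bar F(x)\,\mathrm{d}G_2(y)$ and splitting at a fixed level $T$, the bulk part over $\{y\le T\}$ converges to $\int_{y\le T}e^{\alpha y}\,\mathrm{d}G_2(y)$ by bounded convergence (using local uniformity on $[0,T]$ and the bound $\bar F(x-y)\le\bar F(x)$ for $y\le0$), which tends to $M_{G_2}(\alpha)$ as $T\to\infty$. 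The tail over $\{y>T\}$ I would control by integration by parts, $\int_{(T,\infty)}\bar F(x-y)\,\mathrm{d}G_2(y)=\bar F(x-T)\bar G_2(T)+\int_{(T,\infty)}\bar G_2(y)\,\mathrm{d}_y\bar F(x-y)$, then bound $\bar G_2\le(c+\varepsilon)\bar G_1$ on $(T,\infty)$ for $T$ large and reverse the integration by parts to obtain $\int_{(T,\infty)}\bar F(x-y)\,\mathrm{d}G_2(y)\le(c+\varepsilon)\int_{(T,\infty)}\bar F(x-y)\,\mathrm{d}G_1(y)$. By the assumed $G_1$-equivalence and the bulk computation, this quantity divided by $\bar F(x)$ tends to $(c+\varepsilon)\int_{y>T}e^{\alpha y}\,\mathrm{d}G_1(y)$, which vanishes as $T\to\infty$; combining the bulk lower bound and the tail upper bound pins the limit at $M_{G_2}(\alpha)$.

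For part 1, where $M_F(\alpha)=M_{G_1}(\alpha)=\infty$, I would instead use the symmetric form $\overline{F\star G_j}(x)=\int \bar G_j(x-z)\,\mathrm{d}F(z)$, $j=1,2$, and estimate the ratio directly. The decisive observation is that $M_{G_1}(\alpha)=\infty$ gives $\overline{F\star G_1}(x)/\bar F(x)\to\infty$, so $\bar F(x)=o(\overline{F\star G_1}(x))$, and likewise $\bar F(x)=o(\overline{F\star G_2}(x))$ because $c>0$ forces $M_{G_2}(\alpha)=\infty$. Hence the near-diagonal contribution $\int_{z>x-T}\bar G_j(x-z)\,\mathrm{d}F(z)\le\bar F(x-T)\sim e^{\alpha T}\bar F(x)$ is negligible relative to either convolution tail. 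On the complementary region $\{z\le x-T\}$ the argument $x-z$ exceeds $T$, so $\bar G_2(x-z)/\bar G_1(x-z)\in(c-\varepsilon,c+\varepsilon)$ uniformly once $T$ is large; this traps $\int_{z\le x-T}\bar G_2(x-z)\,\mathrm{d}F(z)$ between $(c\mp\varepsilon)\int_{z\le x-T}\bar G_1(x-z)\,\mathrm{d}F(z)$, and since both truncated integrals agree with the full convolutions up to the negligible near-diagonal term, the ratio $\overline{F\star G_2}(x)/\overline{F\star G_1}(x)$ is confined to $(c-\varepsilon,c+\varepsilon)$ in the limit, giving $c$.

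The main obstacle in both parts is the same in spirit: controlling the region where one summand is of order $x$, where neither the pointwise limit $e^{\alpha y}$ nor the tail ratio $c$ need govern the integrand. In part 2 this is resolved by the integration-by-parts comparison to $G_1$, trading the measure $\mathrm{d}G_2$ for the pointwise tail bound $\bar G_2\le(c+\varepsilon)\bar G_1$; in part 1 it is resolved by moment-infiniteness, which renders that region asymptotically irrelevant. Care is also needed to justify the local uniformity of $\bar F(x-y)/\bar F(x)\to e^{\alpha y}$ and the convergence of the exponential-moment integrals, but these are routine once the splitting is in place.
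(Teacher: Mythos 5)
Your proposal is correct, but it takes a genuinely different route from the paper's. The paper proves the lemma essentially by citation: both parts are Theorem~6(ii,iii) of \citet{Cline.1986} in the case $\bar{F}(0)=\bar{G}_1(0)=1$, with the extension to distributions charging $(-\infty,0)$ obtained by projecting negative mass to $0$ --- via Cline's theorem itself for part 1 (which yields $\overline{F\star G_1}\sim \overline{F^{+}\star G_1^{+}}$) and via Lemma~\ref{lem:negvals}\eqref{lemnegvalitem2} for part 2. You instead give a self-contained first-principles argument that handles $\mathbb{R}$-supported distributions in one pass, and your mechanisms are sound: in part 2, the integration-by-parts trade of the measure $\mathrm{d}G_2$ for the pointwise bound $\bar{G}_2\leq(c+\varepsilon)\bar{G}_1$ on $(T,\infty)$ is exactly the right device, and you correctly identify that the \emph{assumed} equivalence $\overline{F\star G_1}\sim M_{G_1}(\alpha)\bar{F}$ --- not mere finiteness of $M_{G_2}(\alpha)$ --- is what annihilates the region where the $G_2$-summand is of order $x$ (for general $F\in\mathrm{ET}_\alpha$ outside $\mathrm{CE}_\alpha$ the conclusion $\overline{F\star G_2}\sim M_{G_2}(\alpha)\bar{F}$ would otherwise be false); in part 1, the observation that $M_{G_1}(\alpha)=M_{G_2}(\alpha)=\infty$ forces $\bar{F}(x-T)\sim e^{\alpha T}\bar{F}(x)=o(\overline{F\star G_j}(x))$ renders the near-diagonal strip negligible, after which the uniform tail-ratio bound on $\{z\leq x-T\}$ does the rest. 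What your route buys is transparency and slightly more generality: your part 1 never uses $G_1\in\mathrm{ET}_\alpha$ or $M_F(\alpha)=\infty$, only $F\in\mathrm{ET}_\alpha$, $c>0$ and $M_{G_1}(\alpha)=\infty$, so you have in fact proved a marginally stronger statement; what the paper's route buys is brevity and a reusable tool (Lemma~\ref{lem:negvals}) for passing between $\mathbb{R}_+$- and $\mathbb{R}$-supported formulations elsewhere in the manuscript. Two details to tidy in a full write-up: when you reverse the integration by parts, the boundary term $\bar{F}(x-T)\{\bar{G}_2(T)-(c+\varepsilon)\bar{G}_1(T)\}$ must be checked to be nonpositive (it is, for $T$ large, by the ratio hypothesis), and if $F$ and $G_2$ may share atoms it is cleaner to obtain the comparison of the two tail integrals by Fubini rather than Stieltjes integration by parts; the local uniformity of $\bar{F}(x-y)/\bar{F}(x)\to e^{\alpha y}$ is, as you say, routine, following from monotonicity of $\bar{F}$ and continuity of the limit.
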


\begin{proof}[Proof of Proposition~\ref{prop:samealpha}]\ \\
1.~To show $\eta_X=1$, we exploit the closure of $\mathrm{ET}_\alpha$  under convolution  \citep[Lemma 2.5,][]{Watanabe.2008}; equivalently, $\mathrm{RV}_\alpha$ is closed under product convolutions. Applying \citet[][Proposition~2.6(i)]{Resnick07} yields
\begin{equation}\label{eq:eta1}
\log \bar{F}_{S+V}(x) / \log \bar{F}_{S}(x) \sim \alpha x / (\alpha x)=1.
\end{equation}
It remains to show that the limit in \eqref{eq:eta1} does not change when we substitute $ \log \bar{F}_{S+V_\wedge}(x)$ for $\log \bar{F}_{S}(x)$ in the denominator, where no additional assumption on the distribution of $V_\wedge$ is made. 
 If $p^+_\wedge=\bar{F}_{V_\wedge}(0)>0$, we get $\overline{F}_{S+V_\wedge}(x)\geq p_\wedge^+\bar{F}_{S}(x)$ for $x>0$, and then
 $$
 \log \overline{F}_{S+V}(x)/\log \overline{F}_{S+V_\wedge}(x) \geq \log \overline{F}_{S+V}(x)/(\log \overline{F}_{S}(x)+\log p_\wedge^+) \sim 1,
 $$
such that $\eta_X=1$. If $p^+_\wedge=0$, then  Breiman's lemma \citep{Breiman1965} gives $\overline{F}_{S+V_\wedge}\sim \E(W_\wedge^\alpha)\overline{F}_{S}$, and  $\eta_X=1$ follows by analogy with \eqref{eq:eta1}.\\ 
\noindent
2.~ In the case where $F_S\in \mathrm{CE}_\alpha$ and $\overline{F}_V(x)/\overline{F}_S(x)\rightarrow c \geq 0$, $x\rightarrow\infty$, we can use Lemma~\ref{lem:conveq} with $F_1=F_S$ and $F_2=F_V$, which yields
$$
\bar{F}_{S+V}(x)/\bar{F}_S(x) \sim \E\left(e^{\alpha V}\right)+c\E\left(e^{\alpha S}\right)=\E\left(W^\alpha\right)+c\E\left(R^\alpha\right)
$$
and by setting $F_2=F_{V_\wedge}$, we get 
$$
\bar{F}_{S+V_\wedge}(x)/\bar{F}_S(x) \sim \E\left(e^{\alpha V_{\wedge}}\right)+c\chi_W\E\left(e^{\alpha S}\right)=\E\left(W_\wedge^\alpha\right)+c\chi_W\E\left(R^\alpha\right).
$$
Combining these two results yields the value of $\chi_X$.\\
\noindent
3.~If $F_V\in\mathrm{CE}_\alpha$ and $\overline{F}_S(x)/\overline{F}_V(x)\rightarrow 0$, $x\rightarrow\infty$, we use  Lemma~\ref{lem:conveq} to show
$\bar{F}_{S+V}(x)/\bar{F}_V(x)\sim \E(e^{\alpha S})=\E(R^\alpha)$. If $\chi_W>0$, then also $F_{V_\wedge} \in\mathrm{CE}_\alpha$ and we have $\bar{F}_{S+V_\wedge}(x)/\bar{F}_{V_\wedge}(x)\sim \E(R^\alpha)$ by analogy. By combining these two results, we get  $\bar{F}_{S+V_\wedge}(x)/\bar{F}_{S+V}(x) \sim \bar{F}_{V_\wedge}(x)/\bar{F}_{V}(x) \sim \chi_W$, such that $\chi_X=\chi_W$.\\
If $\chi_W=0$, then $\bar F_{V_\wedge}(x) / \bar F_V(x) \to  0, x\to\infty$. Consider the copula $(V_1^\epsilon,V_2^\epsilon)$ defined as the mixture
of $(V_1,V_2)$ and $(V,V)$ with probabilities $(1-\epsilon)$ and $\epsilon$, respectively, for some $0<\epsilon < 1$.
The marginal distribution of $(V_1^\epsilon,V_2^\epsilon)$ is still $F_V$, 
and the induced $V^\epsilon_\wedge$ satisfies
$\bar F_{V_\wedge^\epsilon}(x) = (1-\epsilon)\bar F_{V_\wedge}(x) + \epsilon \bar F_{V}(x) \geq \bar F_{V_\wedge}(x)$.
Therefore, $\bar F_{V_\wedge^\epsilon}(x) / \bar F_V(x) \to  \epsilon >0$ and $\bar F_S(x)  /\bar F_{V_\wedge^\epsilon}(x)\to  0$ as  $x\to\infty$.
Since $V_\wedge^\epsilon$ is stochastically larger than $V_\wedge$, this means that also $Y + V_\wedge^\epsilon$ is stochastically larger than $Y + V_\wedge$ for any
random variable $Y$, by a coupling argument. Thus,
$$ \chi_X = \lim_{x\to\infty} \frac{\bar F_{S + V_\wedge}(x)}{\bar F_{S+V}(x)} \leq \lim_{x\to\infty} \frac{\bar F_{S + V_\wedge^\epsilon}(x)}{\bar F_{S+V}(x)} =  \lim_{x\to\infty}  \frac{\bar F_{V_\wedge^\epsilon}(x)}{\bar F_{V}(x)} = \epsilon,$$
where the last but one equation follows from the former case where $\chi_W > 0$ (with $V_\wedge^\epsilon$ taking the role of $V_\wedge$). Since $\epsilon>0$ is arbitrary, the result follows.\\
\noindent
4.a)~We consider the case $\chi_W>0$ where $\beta_W>-1$ or $\beta_W=-1$ and $\E(W^{\alpha})=\infty$. We can apply Lemma~\ref{lem:etratio}(1), which shows $\chi_X=\chi_W$.\\
4.b)~We consider the case $\chi_W\geq 0$ where $\beta_W<-1$ or $\beta_W=-1<\beta_R$ and $\E\left(W^\alpha\right)<\infty$. Then, Theorem 4(iv) of \citet{Cline.1986} yields $\bar{F}_{S+V}(x)/\bar{F}_S(x)\sim \E\left(W^\alpha\right)$. Therefore, we can apply Lemma~\ref{lem:etratio}(2) with $c=\chi_W$, such that 
$
\chi_X= \E\left(W_\wedge^\alpha\right)/\E\left(W^\alpha\right)
$.
4.c)~Here, $\beta_R,\beta_W>-1$ so $S$, $V$ are gamma-tailed. Since we assume 
$\E\left(W_\wedge^{\alpha+\varepsilon}\right)<\infty$, 
  Breiman's lemma \citep{Breiman1965} provides $\bar{F}_{S+V_\wedge}(x) \sim \E\left(W_\wedge^{\alpha}\right)\bar{F}_S(x)$, whilst by Lemma~\ref{lem:gamma}, \[\bar{F}_{S+V}(x) \sim \alpha \frac{\Gamma(\beta_{W}+1) \Gamma(\beta_R+1)}{\Gamma(\beta_{W}+\beta_R+2)} \ell_{W}(x) x^{\beta_{W}+1} \bar{F}_S(x),\]
  so as $x\to \infty$, \[\chi_X \sim \frac{\E\left(W_\wedge^{\alpha}\right)\Gamma(\beta_R + \beta_W+2)}{\alpha \Gamma(\beta_{W}+1) \Gamma(\beta_R+1) \ell_{W}(x) x^{\beta_{W}+1} } \to 0.\]
\end{proof}



\begin{proof}[Proof of Proposition \ref{prop:lightS}]
 	 1. If  $\chi_W>0$, then necessarily $\alpha_{W}=\alpha_{W_\wedge}$, $\gamma_{W}=\gamma_{W_\wedge}$ and  $\chi_W=c_{W_\wedge}/c_W$ where $\ell_{W} \sim c_W$ and $\ell_{W_\wedge}\sim  c_{W_\wedge}$. Using \citet[Theorem 3.1 and Theorem 4.1(iii)]{Asmussen.al.2017}, we therefore get $\chi_X=k_{X_\wedge}/k_{X}$, where $k_{.}$ are the constants made explicit in the cited paper for the sums of the Weibull-tailed random variables $S+V$ and $S+V_\wedge$ respectively. By simplifying the resulting expression of $\chi_X$, which follows from the equalities $\alpha_{W}=\alpha_{W_\wedge}$, $\gamma_{W}=\gamma_{W_\wedge}$, we obtain $\chi_X=c_{W_\wedge}/c_W=\chi_W$.\\
 	 \noindent
2. When $\chi_W=0$, \citet[Theorem 3.1 and Theorem 4.1(iii)]{Asmussen.al.2017} yield
 \begin{equation}\label{eq:weibsum}
 \frac{\bar{F}_{\log X}(x)}{c_{X}x^{\gamma_{X}}\exp\left(-\alpha_{X}x^{\beta}\right)}\sim 1, \  \gamma_{X}=\gamma_R+\gamma_W+\beta/2, \  \alpha_{X}=\frac{\alpha_R\alpha_W^{\beta/(\beta-1)}+\alpha_W\alpha_R^{\beta/(\beta-1)}}{\left(\alpha_R^{1/(\beta-1)}+\alpha_W^{1/(\beta-1)}\right)^{\beta}},
 \end{equation}
 with a constant $c_{X}>0$ that can be made explicit, see \citet[Theorem 3.1]{Asmussen.al.2017}.  
 Moreover, replacing the symbol $W$ in \eqref{eq:weibsum} with $W_\wedge$ yields the tail approximation of $\log X_\wedge=S+V_\wedge$. Applying Lemma~\ref{lem:weib} to the random vector $(X_1,X_2)$, we get 
 \begin{align*}
 \eta_X&=\frac{\alpha_{X}}{\alpha_{X_\wedge}}= \frac{\left(\alpha_R\alpha_W^{\beta/(\beta-1)}+\alpha_W\alpha_R^{\beta/(\beta-1)}\right) \left(\alpha_R^{1/(\beta-1)}+\alpha_{W_\wedge}^{1/(\beta-1)}\right)^{\beta}}{\left(\alpha_R\alpha_{W_\wedge}^{\beta/(\beta-1)}+\alpha_{W_\wedge}\alpha_R^{\beta/(\beta-1)}\right) \left(\alpha_R^{1/(\beta-1)}+\alpha_W^{1/(\beta-1)}\right)^{\beta}} \\
& =\frac{\alpha_W}{\alpha_{W_\wedge}}\times \frac{\alpha_W^{1/(\beta-1)}+\alpha_R^{1/(\beta-1)}}{\alpha_{W_\wedge}^{1/(\beta-1)}+\alpha_R^{1/(\beta-1)}} \left(\frac{1+\left(\frac{\alpha_{W_\wedge}}{\alpha_R} \right)^{1/(\beta-1)}}{1+\left(\frac{\alpha_{W}}{\alpha_R}\right)^{1/(\beta-1)}}\right)^{\beta}.
 \end{align*}
By substituting  ${\alpha_W}/{\alpha_{W_\wedge}}=\eta_W$, and simplifying, equation~\eqref{eq:lightS} follows. 

 If $\chi_W=0$ and $\eta_W<1$, then $\eta_X<1$, implying $\chi_X=0$. If $\eta_W=1$ but $\chi_W=0$, then Lemma~\ref{lem:weib}\eqref{chiwlwt} with the assumption of asymptotically constant slowly varying functions implies $\gamma_{W_\wedge}<\gamma_W$.  Combining~\eqref{eq:weibsum} and Lemma~\ref{lem:weib}\eqref{chiwlwt}, $\gamma_{X_\wedge}<\gamma_X$ and $\chi_X=0$ also.
 \end{proof}

For the proof of Proposition~\ref{prop:weib}, we need the following Lemma.

\begin{lemma}[Tail decay of products of Weibull-type variables, see Theorem~2.1(b) of \citet{Debicki.al.2018}]
	\label{lem:weibprod}
	If two independent random variables $Y_1\geq 0$ and $Y_2\geq 0$ are Weibull-tailed such that 
	\begin{equation}
	\bar{F}_{Y_j}(x) \sim r_j(x) \exp(-\alpha_{j} x^{\beta_j}), \quad x\to\infty, \quad j=1,2, \label{eq:genWeibtail}
	\end{equation}
with $r_j \in \RV_{\gamma_j}^{\infty}$.	Then, as $x\rightarrow\infty$, 
	\begin{equation*}
	\bar{F}_{Y_1Y_2}(x) \sim  C_0 x^\frac{\beta_1\beta_2}{2(\beta_1+\beta_2)} r_1\left(C_1^{-1}x^{\frac{\beta_2}{\beta_1+\beta_2}}\right) r_2\left(C_1 x^\frac{\beta_1}{\beta_1+\beta_2}\right) \exp\left(-C_2 x^{\frac{\beta_1\beta_2}{\beta_1+\beta_2}} \right),
	\end{equation*}
	with constants $C_1=(\alpha_1\beta_1/(\alpha_2\beta_2))^{1/(\beta_1+\beta_2)}$, $C_0=\left(2\pi\alpha_2\beta_2/(\beta_1+\beta_2)\right)^{1/2}C_1^{\beta_2/2}$  and $C_2=\alpha_1C_1^{-\beta_1}+\alpha_2C_1^{\beta_2}$. For $\beta_1=\beta_2=\beta$ this simplifies to 
	\begin{equation*}
	\bar{F}_{Y_1Y_2}(x) \sim  \sqrt{\pi}\left(\alpha_1\alpha_2x^\beta\right)^{1/4}  r_1\left((\alpha_2/\alpha_1)^{1/(2\beta)}x^{1/2}\right) r_2\left((\alpha_1/\alpha_2)^{1/(2\beta)} x^{1/2}\right) \exp\left(-2\sqrt{\alpha_1\alpha_2} x^{\beta/2} \right).
	\end{equation*}
    Consequently $\bar{F}_{Y_1 Y_2}$ is of the form~\eqref{eq:genWeibtail} with $r_{12} \in \RV_{\gamma_{12}}^{\infty}$, with $\gamma_{12} = (\beta_1\beta_2 + 2\gamma_1\beta_2 + 2\gamma_2\beta_1)/\{2(\beta_1+\beta_2)\}$.
\end{lemma}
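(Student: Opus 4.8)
The plan is to prove the result by Laplace's method (a saddle-point approximation) applied to the multiplicative convolution, following the strategy of \citet{Debicki.al.2018}. First I would write, assuming for the moment that $Y_2$ has a density $f_{Y_2}$,
$$\bar{F}_{Y_1 Y_2}(x) = \int_0^\infty \bar{F}_{Y_1}(x/y)\, f_{Y_2}(y)\, \d y,$$
and substitute the asymptotic forms $\bar{F}_{Y_1}(x/y)\sim r_1(x/y)\exp(-\alpha_1 (x/y)^{\beta_1})$ and $f_{Y_2}(y)\sim \alpha_2\beta_2\, y^{\beta_2-1} r_2(y)\exp(-\alpha_2 y^{\beta_2})$. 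The exponent in the integrand is then $-g(y)$ with $g(y)=\alpha_1 x^{\beta_1} y^{-\beta_1}+\alpha_2 y^{\beta_2}$, while the slowly varying and power factors vary negligibly on the scale of the exponential. The case in which $Y_2$ has no density (or no von Mises-type regularity) can be reduced to this one by integrating by parts in the Stieltjes integral $\int \bar{F}_{Y_1}(x/y)\,\d F_{Y_2}(y)$, or handled directly as in the cited reference.

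Second, I would locate the dominant contribution. Minimizing $g$ over $y>0$ gives the first-order condition $\alpha_2\beta_2\, y^{\beta_1+\beta_2}=\alpha_1\beta_1 x^{\beta_1}$, hence a unique minimizer $y^\star=C_1 x^{\beta_1/(\beta_1+\beta_2)}$ with $C_1=(\alpha_1\beta_1/(\alpha_2\beta_2))^{1/(\beta_1+\beta_2)}$. Evaluating $g$ there yields $g(y^\star)=(\alpha_1 C_1^{-\beta_1}+\alpha_2 C_1^{\beta_2})\,x^{\beta_1\beta_2/(\beta_1+\beta_2)}=C_2\, x^{\beta_1\beta_2/(\beta_1+\beta_2)}$, which produces the claimed exponential factor, while the arguments $x/y^\star=C_1^{-1}x^{\beta_2/(\beta_1+\beta_2)}$ and $y^\star=C_1 x^{\beta_1/(\beta_1+\beta_2)}$ recover the two slowly varying factors $r_1$ and $r_2$ in the stated form.

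Third, I would extract the polynomial prefactor via the quadratic (Gaussian) expansion of $g$ about $y^\star$. Using the first-order condition to simplify, one finds $g''(y^\star)=(\beta_1+\beta_2)\alpha_2\beta_2\,(y^\star)^{\beta_2-2}$, so the Gaussian integral contributes $\sqrt{2\pi/g''(y^\star)}$. Combining this with the factor $\alpha_2\beta_2\,(y^\star)^{\beta_2-1}$ coming from $f_{Y_2}(y^\star)$ gives $\sqrt{2\pi\alpha_2\beta_2/(\beta_1+\beta_2)}\,(y^\star)^{\beta_2/2}$, and since $(y^\star)^{\beta_2/2}=C_1^{\beta_2/2}x^{\beta_1\beta_2/(2(\beta_1+\beta_2))}$ this is exactly $C_0\, x^{\beta_1\beta_2/(2(\beta_1+\beta_2))}$ with $C_0=(2\pi\alpha_2\beta_2/(\beta_1+\beta_2))^{1/2}C_1^{\beta_2/2}$. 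The special case $\beta_1=\beta_2=\beta$ then follows by routine substitution.

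The main obstacle is the rigorous justification of the Laplace approximation rather than the bookkeeping above. I would need to show that the contribution from $y$ outside a shrinking window around $y^\star$ is asymptotically negligible; here the competition between the decreasing term $\alpha_1 x^{\beta_1} y^{-\beta_1}$ and the increasing term $\alpha_2 y^{\beta_2}$ makes $y^\star$ a strict, well-separated minimum, which I would quantify through an explicit lower bound on $g(y)-g(y^\star)$. Within the window I would use Potter bounds to replace $r_1(x/y)$ and $r_2(y)$ by their values at $y^\star$ uniformly; this is legitimate because the window width $1/\sqrt{g''(y^\star)}$ is of smaller order than $y^\star$ (the relative width tends to $0$), so the slowly varying functions are essentially constant there. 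Finally, the ``consequently'' claim is immediate: reading off the exponent index $\beta_1\beta_2/(\beta_1+\beta_2)$ and applying the composition rule $r_j(c\,x^{\kappa})\in\RV_{\kappa\gamma_j}^\infty$ gives $r_{12}\in\RV_{\gamma_{12}}^\infty$ with $\gamma_{12}=\beta_1\beta_2/(2(\beta_1+\beta_2))+\gamma_1\beta_2/(\beta_1+\beta_2)+\gamma_2\beta_1/(\beta_1+\beta_2)$, which equals the stated value.
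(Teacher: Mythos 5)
Your proposal is correct, but note that the paper does not actually prove this lemma: it imports the product-tail asymptotics wholesale from Theorem~2.1(b) of \citet{Debicki.al.2018}, and the only original content in the paper is the final ``consequently'' sentence identifying $r_{12}\in\RV_{\gamma_{12}}^\infty$, which the authors treat as immediate from composition of regularly varying functions --- exactly as you do. What you have done is reconstruct a sketch of the proof in the cited reference, via the Laplace method applied to $\int_0^\infty \bar{F}_{Y_1}(x/y)f_{Y_2}(y)\,\d y$, and your bookkeeping all checks out: the stationarity condition $\alpha_2\beta_2 (y^\star)^{\beta_1+\beta_2}=\alpha_1\beta_1 x^{\beta_1}$ gives $y^\star=C_1x^{\beta_1/(\beta_1+\beta_2)}$; evaluating the exponent yields $g(y^\star)=C_2x^{\beta_1\beta_2/(\beta_1+\beta_2)}$; the simplification $g''(y^\star)=(\beta_1+\beta_2)\alpha_2\beta_2(y^\star)^{\beta_2-2}$ is right, so the prefactor $\alpha_2\beta_2(y^\star)^{\beta_2-1}\sqrt{2\pi/g''(y^\star)}$ collapses to $C_0\,x^{\beta_1\beta_2/(2(\beta_1+\beta_2))}$ with the stated $C_0$; and both the $\beta_1=\beta_2=\beta$ specialization (where $C_2=2\sqrt{\alpha_1\alpha_2}$ and $C_0=\sqrt{\pi}(\alpha_1\alpha_2)^{1/4}$) and the index $\gamma_{12}$ come out as claimed. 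Your localization mechanism is also the right one: the window width $(g''(y^\star))^{-1/2}\asymp(y^\star)^{1-\beta_2/2}$ has vanishing relative size since $\beta_2>0$, so uniform convergence of regularly varying functions plus Potter bounds control $r_1(x/y)$ and $r_2(y)$ there.

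The one soft spot is the reduction to the case where $Y_2$ has a von Mises-type density. Your integration-by-parts remark by itself does not achieve this: integrating by parts in $\int\bar{F}_{Y_1}(x/y)\,\d F_{Y_2}(y)$ merely produces the symmetric expression $\int\bar{F}_{Y_2}(x/s)\,\d F_{Y_1}(s)$, swapping which variable lacks a density. The standard repair, which is what the cited reference effectively does, is to replace $F_{Y_2}$ by a tail-equivalent distribution whose tail has a smooth von Mises representation (using the smooth variation theorem to smooth $r_2$), and then to verify that tail equivalence is preserved under product convolution; the latter follows because the contribution of the region $\{y:\,x/y\leq M\}$ is at most $\bar{F}_{Y_1}(x/M)=\exp\{-\alpha_1(x/M)^{\beta_1}(1+o(1))\}$, which is negligible against $\exp(-C_2x^{\beta_1\beta_2/(\beta_1+\beta_2)})$ since $\beta_1\beta_2/(\beta_1+\beta_2)<\beta_1$, while on the complementary region the ratio of the two tails of $Y_2$ converges to one uniformly. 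With that detail supplied, your argument would be a complete, self-contained proof --- which is more than the paper itself offers for this statement.
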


\begin{proof}[Proof of Proposition \ref{prop:weib}]

  Consider the functions $h_j(x)=\ell_j(x)x^{\gamma_j}\exp(-\alpha_jx^{\beta_j})$ with $\ell_j\in \RV_{0}^\infty$,  $j=1,2$. One easily shows that 
  \begin{equation}\label{eq:hlim}
    h_1(x)/h_2(x)\rightarrow 0, \qquad x\rightarrow\infty,
\end{equation}
  if $\beta_1>\beta_2$, or if $\beta_1=\beta_2$ and $\alpha_1>\alpha_2$, or if $\beta_1=\beta_2$ and $\alpha_1=\alpha_2$ and $\gamma_1 < \gamma_2$.

  Since $\bar{F}_{W_\wedge}(x)/\bar{F}_{W}(x)\leq 1$, we always have $\beta_W\leq \beta_{W_\wedge}$; moreover, $\beta_W=\beta_{W_\wedge}$ implies $\alpha_W\leq \alpha_{W_\wedge}$, and   $\beta_W=\beta_{W_\wedge}$, $\alpha_W= \alpha_{W_\wedge}$ implies $\gamma_W \leq \gamma_{W_\wedge}$.
  
  Using Lemma \ref{lem:weibprod} for $X=RW$, we find that $F_X\in\mathrm{WT}_{\beta_X}$ with parameters
  $\beta_{X} = \beta_R \beta_W/(\beta_R+\beta_W)$, 
  $ \alpha_{X} = \alpha_R C_1^{-\beta_R} + \alpha_W C_1^{\beta_W}$, 
  where $C_1 = \{\alpha_R\beta_R/(\alpha_W\beta_W)\}^{1/(\beta_R+\beta_W)}$, 
  \begin{equation}
  \label{eq:prodWeibgamma}
  \gamma_{X}=(\beta_R\beta_W + 2\gamma_R\beta_W + 2\gamma_W\beta_R)/\{2(\beta_R+\beta_W)\},
  \end{equation}
  and slowly varying function $\ell_{X}> 0$.  The same result applies to $X_\wedge=RW_\wedge$ with constants $\beta_{X_\wedge}$, $\alpha_{X_\wedge}$, $\gamma_{X_\wedge}$, and slowly varying $\ell_{X_\wedge}>0$.
  \\
\noindent
1.~In this case, it follows from Lemma~\ref{lem:weibprod} that also $\beta_{X_\wedge} = \beta_{X}$, $\alpha_{X_\wedge} = \alpha_{X}$,
  and $\gamma_{X_\wedge} = \gamma_{X}$, and thus
  \begin{align}\label{chi1}
    \chi_X = \lim_{x\rightarrow\infty} \bar{F}_{X_\wedge}(x) / \bar{F}_{X}(x) = \lim_{x\rightarrow\infty} \ell_{W_\wedge}(x) / \ell_W(x)
 = \chi_W\in [0,1]
  \end{align}
since all other dominating terms of higher order cancel out, and the further results follow straightforwardly. 
\\
\noindent 
2.~
Since $\gamma_{W_\wedge} < \gamma_W$, equation~\eqref{eq:prodWeibgamma} implies $\gamma_{X_\wedge} < \gamma_X$, whilst $\beta_X = \beta_{X_\wedge}$ and $\alpha_X = \alpha_{X_\wedge}$. Similarly to~\eqref{chi1}, we therefore obtain $\chi_X = \lim_{x\rightarrow\infty} \{\ell_{X_\wedge}(x) / \ell_{X}(x)\} x^{\gamma_{X_\wedge}-\gamma_{X}}= 0 =  \chi_W$.
 On the other hand, we have
  \begin{equation*}
    \eta_X=\lim_{x\rightarrow\infty} \frac{\log \bar{F}_{X}(x)}{\log \bar{F}_{X_\wedge}(x)} = \lim_{x\rightarrow\infty} \frac{\alpha_{X} x^{\beta_{X}} } {\alpha_{X_\wedge} x^{\beta_{X_\wedge}}}  = 1 = \lim_{x\rightarrow\infty} \frac{\alpha_{W} x^{\beta_{W}} } {\alpha_{W_\wedge} x^{\beta_{W_\wedge}}} = \eta_W. 
  \end{equation*}
\\
\noindent
3.~Since $\beta_{W_\wedge} = \beta_W$ and $\alpha_{W_\wedge} > \alpha_W$, it follows from \eqref{eq:hlim}
  that $\chi_W = 0$, and that $\eta_W=\alpha_W/\alpha_{W_\wedge}$. We clearly have that $\beta_{X} = \beta_{X_\wedge}$, and 
  \begin{align*}
    \frac{\alpha_{X}}{\alpha_{X_\wedge}} &= \frac{\alpha_{R}\left(\frac{\alpha_R\beta_R}{\alpha_W\beta_W}\right)^{-\beta_R/(\beta_R+\beta_W)}+\alpha_{W}\left(\frac{\alpha_R\beta_R}{\alpha_W\beta_W}\right)^{\beta_W/(\beta_R+\beta_W)} }{\alpha_{R}\left(\frac{\alpha_R\beta_R}{\alpha_{W_\wedge}\beta_{W_\wedge}}\right)^{-\beta_R/(\beta_R+\beta_{W_\wedge})}+\alpha_{{W_\wedge}}\left(\frac{\alpha_R\beta_R}{\alpha_{W_\wedge}\beta_{W_\wedge}}\right)^{\beta_{W_\wedge}/(\beta_R+\beta_{W_\wedge})}}\\
    &= \left(\frac{\alpha_W}{\alpha_{W_\wedge}}\right)^{\beta_R/(\beta_R+\beta_{W})}\in (0,1),
  \end{align*}
  after some algebra and using that $\beta_{W_\wedge} = \beta_W$. Thus,  ${\alpha_{X_\wedge}}> {\alpha_{X}}$, and by \eqref{eq:hlim} we conclude that $\chi_X = 0$, and further
  \begin{equation*}
    \eta_X=\lim_{x\rightarrow\infty} (\alpha_{X}/\alpha_{X_\wedge}) x^{\beta_{X}-\beta_{X_\wedge}}  =  \left(\alpha_W /\alpha_{W_\wedge}\right)^{\beta_R/(\beta_R+\beta_{W})} \neq \eta_W.
  \end{equation*}
\noindent
4.~For the case $\beta_W < \beta_{W_\wedge}$, we also have $\beta_{X}=\beta_R\beta_W/(\beta_W+\beta_R)<\beta_R\beta_{W_\wedge}/(\beta_{W_\wedge}+\beta_R)=\beta_{X_\wedge}$. It therefore follows from \eqref{eq:hlim} that $\chi_X = \chi_W = 0$ and $\eta_X = \eta_W = 0$.
\end{proof}

\begin{proof}[Proof of Proposition~\ref{prop:negwei}]
\noindent
 1. Applying Lemma \ref{lem:HashorvaThm3} to compute $\bar{F}_{X}(x)$ and $\bar{F}_{X_\wedge}(x)$ gives
    $$\chi_X = \lim_{x\to r^\star} \frac{ \Gamma(1+\alpha_{W_\wedge}) \ell_{W_\wedge}(\{xb_R(x)\}^{-1}) \{xb_R(x)\}^{-\alpha_{W_\wedge}} \bar F_R(x)}{\Gamma(1+\alpha_W) \ell_W(\{xb_R(x)\}^{-1}) \{xb_R(x)\}^{-\alpha_W} \bar F_R(x)} = \lim_{s\to 0} \frac{ \Gamma(1+\alpha_{W_\wedge}) \ell_{W_\wedge}(s) s^{\alpha_{W_\wedge}} }{\Gamma(1+\alpha_W) \ell_W(s) s^{\alpha_W} } =\chi_W,$$
    since $xb_R(x) \to \infty$ for $x\to r^\star$, $\alpha_{W_\wedge}\geq \alpha_{W}$, and if $\chi_W > 0$ then necessarily $\alpha_{W_\wedge} = \alpha_{W}$. Similarly,
    $\eta_X = \lim_{x\to r^\star} \log\bar{F}_{X}(x) / \log\bar{F}_{X_\wedge}(x) = 1$,
    since, by l'H\^opital's rule $\lim_{x\to r^\star} \log(x b_R(x)) / \log\bar{F}_R(x) = 0$, and therefore the term $\log \bar F_R(x)$ dominates both the numerator and the denominator.\\
    \noindent
 2. We observe that for some $z<w^\star$ and $K>0$, 
    $$ \chi_W = \lim_{x\to w^\star} \frac{\bar{F}_{W_\wedge}(x)}{\bar{F}_{W}(x)} = \lim_{x\to w^\star} K 
    \exp\left\{ - \int_z^x b_{W_\wedge}(t)\left[ 1 - b_W(t) / b_{W_\wedge}(t)\right] \mathrm{d} t \right\}. $$
    If $\chi_W>0$ then we must have $\lim_{x\to w^\star} b_W(x) / b_{W_\wedge}(x) = 1$, since
    $\int_z^{w^\star} b_{W_\wedge}(t)\mathrm{d} t = \infty$.
    Therefore, by Lemma~\ref{lem:HashorvaThm3},
    \begin{align*}
      \chi_X = \lim_{x\to w^\star} \frac{\ell_R(\{x  b_{W_\wedge} (x)\}^{-1})\{x b_{W_\wedge}(x)\}^{-\alpha_R}\bar{F}_{W_\wedge}(x)}{\ell_R(\{xb_W(x)\}^{-1})\{xb_W(x)\}^{-\alpha_R}\bar{F}_{W}(x)} = \chi_W.
    \end{align*}
    On the other hand, if $\chi_W = 0$, then 
    $\eta_X = \lim_{x\to w^\star} \log \bar{F}_{W}(x) / \log \bar{F}_{W_\wedge}(x) = \eta_W$.
\noindent
 3. The upper endpoint of both $X$ and $X_{\wedge}$ is $x^\star=r^\star w^\star$. Lemma \ref{lem:HashorvaThm3} and Remark~\ref{rmk:diffendpoint}  yield
    $$\bar{F}_{X}(x^\star - s) \sim \frac{\Gamma(1+\alpha_R)\Gamma(1+\alpha_W)}{\Gamma(1+\alpha_R+\alpha_W)} \bar{F}_W(w^\star-s/r^\star)\bar{F}_R(r^\star-s/w^\star) \sim \frac{\Gamma(1+\alpha_R)\Gamma(1+\alpha_W)}{\Gamma(1+\alpha_R+\alpha_W)}  \ell_R(s) \ell_W(s) s^{\alpha_R + \alpha_W},$$
   as $s\to 0$, and similarly for $\bar{F}_{X_\wedge}(x^\star - s)$. Therefore, if $\alpha_{W_\wedge} = \alpha_W$,
    then 
    $$\frac{\bar{F}_{X_\wedge}(x^\star - s)}{\bar{F}_{X}(x^\star - s)} \sim \frac{\bar{F}_{W_\wedge}(w^\star - s)}{\bar{F}_{W}(w^\star - s)} \sim \frac{\ell_{W_\wedge} (s)}{\ell_{W} (s)} s^{\alpha_{W_\wedge} - \alpha_W} ,$$
    and consequently, $\chi_X = \chi_W$. If  $\alpha_{W_\wedge} > \alpha_W$, then $\chi_X=\chi_W =0$.
    For the residual tail dependence coefficient we only need to keep the dominating terms, and we compute
    $\eta_X = \lim_{s\to 0} \log \bar{F}_{X}(x^\star - s) / \log \bar{F}_{X_\wedge}(x^\star - s) = (\alpha_W + \alpha_R)/(\alpha_{W_\wedge} + \alpha_R).$
\end{proof}

\subsection{Proofs for Section~\ref{sec:NewExamples}}

\begin{proof}[Proof of Proposition~\ref{prop:constrainedmodel}]
 For $\theta>1$, the norm $\nu$ is as in Example~\ref{ex:AD}, and the result for $\chi_X$ is derived using Proposition~\ref{prop:RGumbel}. For $\theta<1$, $\nu$ requires rescaling to meet our requirement that $\nu(x,y) \geq \max(x,y)$ with equality somewhere. Note that scaling by any constant $K\in(0,\infty)$ does not affect the dependence, i.e., $\X = R (\tau(Z),\tau(1-Z))$ has the same copula as $\X = R (\tau(Z),\tau(1-Z))/ K$. For $\theta\in[1/2,1]$ we therefore define $\nu^*(x,y) = \theta^{-1}\nu(x,y)$ which satisfies the required scaling, so that $\zeta=1/\nu^*(1,1) = \theta$. The result for $\eta_X$ is then also given by Proposition~\ref{prop:RGumbel}.
 \end{proof}

\begin{proof}[Proof of Proposition~\ref{prop:unconstrainedmodel}]
 The first claim follows directly from Proposition~\ref{prop:Frechet_chi}, since $\E(W^{1/\xi+\varepsilon})<\infty$. Since $\eta_W\in(0,1)$ is defined, $W_\wedge$ has the same upper endpoint as $W$, and $\bar{F}_{W_\wedge} (w^\star-s) = \ell_{W_\wedge}(\bar{F}_W(s))\bar{F}_W(s)^{1/\eta_W}$, implying $W_\wedge$ is also in the negative Weibull MDA with $\alpha_{W_\wedge} = \alpha_W/\eta_W$. The second and third claims then follow from parts~1 and~3 of Proposition~\ref{prop:negwei}. 
 \end{proof}

\subsection*{Acknowledgments}
Financial support from the Swiss National Science Foundation grant 200021-166274 (Sebastian Engelke) and UK Engineering and Physical Sciences Research Council grant EP/P002838/1 (Jennifer Wadsworth) is gratefully acknowledged. Thomas Opitz was partially funded by the French national programme LEFE/INSU.
\appendix

\section{Lemmas and proofs}
\label{app:AProofs}
\subsection{Additional lemmas}

The following Lemma~\ref{lem:breiman} is widely known as \emph{Breiman's lemma} and is useful in several contexts throughout Section~\ref{sec:Proofs}.

\begin{lemma}[Breiman's lemma, see \citet{Breiman1965,ClineSamorodnitsky1994} and \citet{Pakes.2004}, Lemma~2.1]\label{lem:breiman}
	Suppose $X\sim F$, $Y\sim G$ are independent random variables. If $\overline{F}\in\RV_{-\alpha}^\infty$ with $\alpha\geq 0$ and $Y\geq 0$ with $\E (Y^{\alpha+\varepsilon})<\infty$ for some $\varepsilon>0$, then $$\overline{F}_{XY}(x)\sim \E\left(Y^{\alpha}\right)\, \overline{F}(x), \qquad x\to\infty.$$ 
	Equivalently, if $F\in\mathrm{ET}_{\alpha}$ and $\E(e^{(\alpha+\epsilon)Y})<\infty$, then $\bar{F}_{X+Y}(x)=\overline{F\star G}(x)\sim \E\left(e^{\alpha Y}\right)\, \overline{F}(x)$.
\end{lemma}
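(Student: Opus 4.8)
The plan is to prove the multiplicative statement first and then deduce the additive (convolution) form by an exponential change of variables. First I would condition on $Y$: using independence and $Y\ge 0$, for $x>0$,
\[
\overline{F}_{XY}(x) = \int_{(0,\infty)} \overline{F}(x/y)\,\d G(y),
\]
so that $\overline{F}_{XY}(x)/\overline{F}(x) = \int_{(0,\infty)} \{\overline{F}(x/y)/\overline{F}(x)\}\,\d G(y)$. Since $\overline{F}\in\RV_{-\alpha}^\infty$, for each fixed $y>0$ the integrand converges pointwise, $\overline{F}(x/y)/\overline{F}(x)\to y^\alpha$ as $x\to\infty$ (the limit being $1$ when $\alpha=0$). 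The whole argument then reduces to justifying the interchange of limit and integral, after which the limit equals $\int_{(0,\infty)} y^\alpha\,\d G(y)=\E(Y^\alpha)$.

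To justify the interchange I would split the domain at $y=x/X_0$, where $X_0$ is a Potter threshold chosen below. On $\{0<y\le x/X_0\}$ both arguments $x$ and $x/y$ exceed $X_0$, so Potter's bound applied to the slowly varying part of $\overline{F}$ gives, for a fixed $\delta\in(0,\varepsilon)$ and some constant $A$,
\[
\frac{\overline{F}(x/y)}{\overline{F}(x)} \le A\max(y^{\alpha-\delta},y^{\alpha+\delta}) \le A(1+y^{\alpha+\delta});
\]
for $0<y\le 1$ the monotonicity bound $\overline{F}(x/y)\le\overline{F}(x)$ (as $x/y\ge x$) shows the integrand is at most $1$, so $g(y):=1+Ay^{\alpha+\delta}$ dominates the integrand on the whole region. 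Since $\E(Y^{\alpha+\varepsilon})<\infty$ forces $\E(Y^{\alpha+\delta})<\infty$, $g$ is $G$-integrable, and dominated convergence applied to the integrand times $\mathbf{1}_{\{y\le x/X_0\}}$ (whose indicator tends to $1$) yields convergence of this piece to $\E(Y^\alpha)$.

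The main obstacle is controlling the far region $\{y>x/X_0\}$, where $x/y<X_0$ lies below the Potter threshold and the bound above is unavailable; this is precisely where the strengthened moment condition beyond $\E(Y^\alpha)$ is needed. Here I would use $\overline{F}(x/y)\le 1$ together with a Markov estimate: the contribution is at most $\P(Y>x/X_0)/\overline{F}(x)\le \E(Y^{\alpha+\varepsilon})X_0^{\alpha+\varepsilon}x^{-(\alpha+\varepsilon)}/\overline{F}(x)$. Writing $\overline{F}(x)=x^{-\alpha}L(x)$ with $L$ slowly varying, this is a constant multiple of $x^{-\varepsilon}/L(x)$, which tends to $0$ because slowly varying functions are dominated by every positive power. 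Combining the two regions gives $\overline{F}_{XY}(x)/\overline{F}(x)\to\E(Y^\alpha)$, as claimed.

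Finally, for the equivalent additive statement I would exponentiate. If $F\in\mathrm{ET}_\alpha$ then $x\mapsto\overline{F}(\log x)$ is regularly varying with index $-\alpha$, as recorded after the definition of $\mathrm{ET}_\alpha$; setting $\tilde{X}=e^{X}$ and $\tilde{Y}=e^{Y}\ge 0$, the hypothesis $\E(e^{(\alpha+\varepsilon)Y})<\infty$ is exactly $\E(\tilde{Y}^{\alpha+\varepsilon})<\infty$. Applying the multiplicative case to $\tilde{X}\tilde{Y}=e^{X+Y}$ and undoing the logarithm gives $\overline{F\star G}(t)=\overline{F}_{X+Y}(t)\sim\E(e^{\alpha Y})\,\overline{F}(t)$ as $t\to\infty$, which completes the proof.
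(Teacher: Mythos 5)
Your proof is correct, and there is in fact nothing in the paper to compare it against: the authors state this lemma without proof, importing it from \citet{Breiman1965}, \citet{ClineSamorodnitsky1994} and Lemma~2.1 of \citet{Pakes.2004}. Your argument is essentially the standard proof underlying those references: write $\bar{F}_{XY}(x)/\bar{F}(x)=\int_{(0,\infty)}\{\bar{F}(x/y)/\bar{F}(x)\}\,\d G(y)$, dominate on $\{y\le x/X_0\}$ via Potter's bound, use the monotonicity bound $\bar{F}(x/y)\le\bar{F}(x)$ for $y\le 1$ (which you correctly invoke to avoid the $y^{\alpha-\delta}$ blow-up near zero, essential when $\alpha<\delta$ and in particular for $\alpha=0$), and kill the far region $\{y>x/X_0\}$ by Markov's inequality with the surplus moment, giving a contribution of order $x^{-\varepsilon}/L(x)=o(1)$ relative to $\bar{F}(x)=x^{-\alpha}L(x)$ --- this is exactly where the strict surplus $\varepsilon>0$ is needed, and you isolate that role correctly. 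The transfer to the additive statement via $\tilde{X}=e^{X}$, $\tilde{Y}=e^{Y}$ is also sound, resting on the equivalence $F\in\mathrm{ET}_\alpha\Leftrightarrow\bar{F}(\log(\cdot))\in\RV_{-\alpha}^\infty$ recorded after the definition in Section~\ref{sec:Notation}; note that since $e^{Y}>0$ automatically, your derivation yields the additive form for real-valued $Y$ with no sign restriction, which is precisely the generality the paper later uses (e.g.\ in the proof of Lemma~\ref{lem:negvals}, where the convolution form is applied to negative summands). The only pedantic caveat: for $\alpha=0$ your limit is $\int_{(0,\infty)}y^{0}\,\d G(y)=\P(Y>0)$, which matches $\E(Y^{\alpha})$ only under the convention $0^{\alpha}=0$ or when $\P(Y>0)=1$; this ambiguity sits in the statement itself rather than in your proof, and is immaterial in the paper's applications, where $\P(W_j>0)=1$ whenever $\alpha=0$ is invoked.
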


The following Lemma~\ref{lem:gamleq1} provides some additional detail on the function $\tau$, as defined in Section~\ref{sec:Constrained}.
\begin{lemma}
	\label{lem:gamleq1}
	If $1-\tau_1(b_1-\cdot) \in \RV_{1/\gamma_1}^0$, for $\gamma_1>0$, then $b_1 - \tau_1^{-1}(1-\cdot) \in \RV_{1/\gamma_1}^0$, and $\gamma_1 \leq 1$. Similarly, if $1-\tau_2(b_2+\cdot) \in \RV_{1/\gamma_2}^0$, for $\gamma_2>0$, then $\tau_2^{-1}(1-\cdot)-b_2 \in \RV_{\gamma_2}^0$ and $\gamma_2 \leq 1$.
\end{lemma}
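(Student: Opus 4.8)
The plan is to prove the two conclusions for $\tau_1$; those for $\tau_2$ follow by the symmetric argument. Throughout I would write $\phi(z) = \nu(z,1-z)$ and $\psi(z) = \phi(z)-z$ on $[0,1]$. Since $\nu$ is a norm it is convex and $z\mapsto(z,1-z)$ is affine, so $\phi$, and hence $\psi$, is convex on $[0,1]$. By the definition of $b_1$ we have $\tau(b_1)=b_1/\phi(b_1)=1$, so $\psi(b_1)=0$ (and $\phi(b_1)=b_1>0$), while $\tau_1(z)<1$ for $z<b_1$ gives $\psi(z)>0$ there.

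For the regular variation of the inverse, set $g(s)=1-\tau_1(b_1-s)$ for small $s>0$. Since $\tau_1$ is continuous, strictly increasing, and $\tau_1(b_1)=1$, the function $g$ is strictly increasing with $g(0^+)=0$, and a direct substitution shows that $s\mapsto g(s)$ and $u\mapsto b_1-\tau_1^{-1}(1-u)$ are mutually inverse, since $g\bigl(b_1-\tau_1^{-1}(1-u)\bigr)=1-\tau_1\bigl(\tau_1^{-1}(1-u)\bigr)=u$. As $g\in\RV_{1/\gamma_1}^0$ has positive index $1/\gamma_1$, its inverse is regularly varying at zero with reciprocal index $\gamma_1$ by the standard inversion theorem for regularly varying functions \citep{Resnick07}; that is, $b_1-\tau_1^{-1}(1-\cdot)\in\RV_{\gamma_1}^0$, as claimed.

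To obtain $\gamma_1\leq 1$ it suffices to show that the index $1/\gamma_1$ of $g$ is at least $1$, for which I would first establish $g(s)=O(s)$. Writing $1-\tau_1(z)=\psi(z)/\phi(z)$ and using $\phi(b_1-s)\to\phi(b_1)=b_1>0$, this reduces to $\psi(b_1-s)=O(s)$. Here convexity is the key ingredient: because $\psi$ is convex on $[0,1]$ and $b_1\in[1/2,1]$, the left derivative $\psi'(b_1^-)$ exists and is finite. When $b_1<1$ this is immediate, as $b_1$ is interior and finite convex functions are locally Lipschitz there; when $b_1=1$ one uses that $\psi$ decreases to $0$ as $z\uparrow 1$, so the secant slopes $-\psi(1-s)/s$ are nonpositive and nondecreasing as $s\downarrow 0$, whence their limit $\psi'(1^-)$ is finite. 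In either case $\psi(b_1-s)/s\to -\psi'(b_1^-)\in[0,\infty)$, giving $\psi(b_1-s)=O(s)$ and thus $g(s)=O(s)$.

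Finally I would combine this with the regular variation of $g$: if $g\in\RV_{1/\gamma_1}^0$ and $g(s)=O(s)$, then necessarily $1/\gamma_1\geq 1$, since an index $1/\gamma_1<1$ would force $g(s)/s=s^{1/\gamma_1-1}\ell(s)\to\infty$ for the associated slowly varying $\ell$, contradicting boundedness; hence $\gamma_1\leq 1$. The claims for $\tau_2$ follow identically, with $b_1-s$ replaced by $b_2+s$ and the left derivative at $b_1$ replaced by the right derivative $\psi'(b_2^+)$, which is finite because $b_2<1$ is an interior point of $[0,1]$ whenever the $\tau_2$-statement applies. The only slightly delicate points are the finiteness of the one-sided derivative at the endpoint $b_1=1$ and checking that the $O(s)$ bound genuinely pins the regular-variation index at $\geq 1$; the remaining steps are routine.
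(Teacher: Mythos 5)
Your proof is correct, and its first half is exactly the paper's: both apply the standard inversion theorem for monotone regularly varying functions (Resnick's Proposition~2.6(v), adapted to zero) to $g(t)=1-\tau_1(b_1-t)$, whose inverse is $g^{-1}(s)=b_1-\tau_1^{-1}(1-s)$. Note that, like the paper's own proof, you obtain $b_1-\tau_1^{-1}(1-\cdot)\in\RV_{\gamma_1}^0$; the index $1/\gamma_1$ printed in the first half of the lemma statement is evidently a typo, as the symmetric second half and the use of the lemma in the proof of Lemma~\ref{lem:taylor} confirm. Where you genuinely diverge is in showing $\gamma_1\leq 1$. The paper invokes assumption (N1) to Taylor-expand $\tau_1$ about $b_1$, reducing the claim to finiteness of $\tau_1'(b_{1-})$, which it establishes via the substitution $\mu(x)=\nu(1,x)$, the derivative formula $\tau'(x_-)=x^{-2}\mu'((1/x-1)_+)/\mu(1/x-1)^2$, and a secant-slope bound for the convex function $\mu$. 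You instead bound $g$ directly: writing $1-\tau_1(z)=\psi(z)/\phi(z)$ with $\phi(z)=\nu(z,1-z)$ and $\psi=\phi-\mathrm{id}$ convex (norm composed with an affine map, minus a linear function), $\psi(b_1)=0$ and $\psi\geq 0$, you deduce $\psi(b_1-s)=O(s)$ from finiteness of the one-sided derivative of a convex function, treating the endpoint case $b_1=1$ correctly via nonpositive, monotone secant slopes; since $\phi(b_1-s)\to b_1\geq 1/2$, this gives $g(s)=O(s)$, and you then make explicit the step the paper leaves implicit, namely that an index $1/\gamma_1<1$ would force $g(s)/s=s^{1/\gamma_1-1}\ell(s)\to\infty$, a contradiction. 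Your route buys a cleaner and slightly stronger argument: it needs only convexity of $\nu$, dispensing with the smoothness hypothesis (N1) and the derivative formula for $\tau$, and it handles $b_1=1$ transparently. The paper's computation, in exchange, produces the finiteness of $\tau_1'(b_{1-})$ itself, a quantity that reappears explicitly in the constants of Lemma~\ref{lem:taylor} (e.g.\ in~\eqref{eq:L}), so its extra machinery is reused downstream rather than wasted.
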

\begin{proof}
	The argument is similar for both cases, so we focus on the first one. Let $g(t) = 1-\tau_1(b_1-t)$, which is decreasing as $t\to 0$ and invertible, with $g^{-1}(s)=b_1 - \tau_1^{-1}(1-s)$. Since $g \in \RV^0_{1/\gamma_1}$, then $g^{-1} \in \RV^0_{\gamma_1}$ (\citet[][Proposition~2.6(v)]{Resnick07}, adapting to regular variation at zero). Now make the left-sided Taylor expansion
	\[
	\tau_1(b_1-t) = \tau_1(b_1) -t \tau_1'(b_{1-}) + O(t^2), 
	\]
	and note that $\tau_1(b_1) = 1$. Consequently, $1-\tau_1(b_1-t) = t \tau_1'(b_{1-}) + O(t^2)$, and so finiteness of $\tau_1'(b_{1-})$ will imply that the index of regular variation of $g$ is at least 1. Define the convex function $\mu:(0,\infty)\to (0,\infty)$ by $\mu(x) = \nu(1,x)$, so that $\tau(x) = 1/\mu(1/x-1)$. Since $\tau_1$ is increasing on $(0,b_1)$, $\mu$ is increasing on $((1-b_1)/b_1, \infty)$. We have
	\[
	\tau'(x_{-}) = x^{-2} \mu'((1/x-1)_+) / \mu(1/x-1)^2, 
	\]
	and so $\tau_1'(b_{1-}) = b_1^{-2} \mu'([(1-b_1)/b_1]_+) / \mu((1-b_1)/b_1)^2$. For $h\in(0,1)$, convexity entails
	\[
	\mu((1-b_1)/b_1 + h) \leq h \mu((1-b_1)/b_1+1) +(1-h)\mu((1-b_1)/b_1),
	\]
	and so
	\[
	0\leq \frac{\mu((1-b_1)/b_1 + h) - \mu((1-b_1)/b_1)}{h} \leq \mu((1-b_1)/b_1+1)-\mu((1-b_1)/b_1) <\infty.
	\]
	Hence $\mu'([(1-b_1)/b_1]_+)<\infty$, giving $\tau_1'(b_{1-})<\infty$. Thus the index of regular variation of $g$ is at least 1.
\end{proof}

The following Lemma~\ref{lem:negvals} clarifies the influence of negative values in convolutions of exponential-tailed distributions. It allows us to extend certain results from the literature formulated for nonnegative random variables to the real line. 
\begin{lemma}[Convolutions of exponential-tailed distributions with negative values]
	\label{lem:negvals}
	For $i=1,2$ and probability distributions $F_i\in\mathrm{ET}_\alpha$  defined over $\mathbb{R}$ with $\alpha>0$, denote $p_i=\bar{F}_i(0)\in[0,1]$ the probability of nonnegative values. Using the convention $0/0=0$, let $F_i^+$ with $1-F_i^+(x)=\bar{F}_i(x)/p_i$, $x\geq 0$, denote the conditional distribution of $F_i$ over nonnegative values, and let $F_i^-(x)=F_i(x)/(1-p_i)$, $x<0$, denote the conditional distribution of $F_i$ over negative values. We use the notation $M_H(\alpha)=\int_{-\infty}^\infty\exp(\alpha y)H(\mathrm{d}y)$ for a given distribution $H$. Then:
	\begin{enumerate}
		\item If $\bar{F_i^+}(x)/\bar{F_1^+\star F_2^+}(x)\rightarrow 0$ when $x\rightarrow\infty$ for $i=1,2$, then 
		\begin{equation}\label{eq:F1F2bothdominated}
		\bar{F_1\star F_2} \sim p_1p_2\bar{F_1^+\star F_2^+}.
		\end{equation} \label{lemnegvalitem1}
		\item If $\bar{F_1^+\star F_2^+} \sim c_1\bar{F_1^+}+c_2\bar{F_2^+}$ with constants $0\leq c_1,c_2<\infty$, then
		\begin{equation}\label{eq:F1F2conveq}
		\bar{F_1\star F_2}(x) \sim \bar{F}_1(x) \left( p_2c_1+(1-p_2)M_{F_2^-}(\alpha)\right)+\bar{F}_2(x) \left( p_1c_2+(1-p_1)M_{F_1^-}(\alpha)\right).
		\end{equation}
		Specifically, if $c_1=M_{F_2^+}(\alpha)$ and $c_2=M_{F_1^+}(\alpha)$, then 
		$$
		\bar{F_1\star F_2}(x)\sim M_{F_2}(\alpha) \bar{F}_1(x)+M_{F_1}(\alpha) \bar{F}_2(x),
		$$
		and if $c_1=M_{F_2^+}(\alpha)$ and $c_2=0$, then 
		$$
		\bar{F_1\star F_2}(x)\sim M_{F_2}(\alpha) \bar{F}_1(x)+\bar{F}_2(x)(1-p_1)m_{F_1^-}(\alpha).
		$$\label{lemnegvalitem2}
	\end{enumerate}
\end{lemma}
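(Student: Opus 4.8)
The plan is to decompose each $F_i$ into the mixture of its positive and negative conditional parts, $F_i = p_iF_i^+ + (1-p_i)F_i^-$, expand the convolution $F_1\star F_2$ bilinearly into four pieces, and control each piece separately. Writing out the survival function gives
\begin{align*}
\overline{F_1\star F_2} = p_1p_2\,\overline{F_1^+\star F_2^+} &+ p_1(1-p_2)\,\overline{F_1^+\star F_2^-} \\
&+ (1-p_1)p_2\,\overline{F_1^-\star F_2^+} + (1-p_1)(1-p_2)\,\overline{F_1^-\star F_2^-}.
\end{align*}
First I would note that the purely negative term vanishes on $[0,\infty)$: since $F_i^-$ is supported on $(-\infty,0)$, the sum of two such variables is almost surely negative, so $\overline{F_1^-\star F_2^-}(x)=0$ for all $x\ge 0$ and contributes nothing to the tail. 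I would also record that $p_i>0$, since membership in $\mathrm{ET}_\alpha$ forces upper endpoint $\infty$; hence $F_i^+$ is well defined, and the convention $0/0=0$ only ever intervenes for the $F_i^-$ factor when $p_i=1$, where it is multiplied by $1-p_i=0$.

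Second, I would apply Breiman's lemma in the exponential form of Lemma~\ref{lem:breiman} to each cross term. Since $F_i\in\mathrm{ET}_\alpha$ and $\overline{F_i^+}(x)=\overline{F}_i(x)/p_i$ for $x\ge 0$, the conditional $F_i^+$ is again in $\mathrm{ET}_\alpha$; and since $F_j^-$ is supported on the negative half-line, $Y\sim F_j^-$ satisfies $\E(e^{(\alpha+\varepsilon)Y})\le 1<\infty$. Lemma~\ref{lem:breiman} therefore yields
\begin{equation*}
\overline{F_1^+\star F_2^-}(x)\sim M_{F_2^-}(\alpha)\,\overline{F_1^+}(x), \qquad \overline{F_1^-\star F_2^+}(x)\sim M_{F_1^-}(\alpha)\,\overline{F_2^+}(x).
\end{equation*}

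For part~\ref{lemnegvalitem1}, the hypothesis $\overline{F_i^+}=o(\overline{F_1^+\star F_2^+})$ for $i=1,2$ makes both cross terms $o(\overline{F_1^+\star F_2^+})$, so only the first term survives and \eqref{eq:F1F2bothdominated} follows. For part~\ref{lemnegvalitem2}, I would substitute the assumed equivalence $\overline{F_1^+\star F_2^+}\sim c_1\overline{F_1^+}+c_2\overline{F_2^+}$ together with the two Breiman relations, collect the coefficients of $\overline{F_1^+}$ and of $\overline{F_2^+}$, and use $p_i\overline{F_i^+}(x)=\overline{F}_i(x)$ to rewrite everything in terms of $\overline{F}_1$ and $\overline{F}_2$; this gives exactly \eqref{eq:F1F2conveq}. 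Since all four surviving quantities are nonnegative, termwise asymptotic addition is legitimate. The two displayed special cases then follow from the moment identity $M_{F_i}(\alpha)=p_iM_{F_i^+}(\alpha)+(1-p_i)M_{F_i^-}(\alpha)$, which collapses $p_2c_1+(1-p_2)M_{F_2^-}(\alpha)$ to $M_{F_2}(\alpha)$ when $c_1=M_{F_2^+}(\alpha)$, and symmetrically for the $\overline{F}_2$ coefficient.

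The main obstacle is less a deep difficulty than a bookkeeping hazard: I must verify that the hypotheses of Breiman's lemma genuinely hold for the cross convolutions (in particular that $F_i^+\in\mathrm{ET}_\alpha$ and that the negative parts have finite $(\alpha+\varepsilon)$-exponential moment, which is automatic from $Y<0$), and then track the mixture weights $p_1p_2$, $p_1(1-p_2)$, $(1-p_1)p_2$ correctly through the grouping so that the constants $p_2c_1+(1-p_2)M_{F_2^-}(\alpha)$ and $p_1c_2+(1-p_1)M_{F_1^-}(\alpha)$ emerge with the right weights. Care is also needed at the boundary $p_i=1$ to confirm that the $0/0$ convention does not spoil any step, and in part~\ref{lemnegvalitem2} to ensure the asymptotic sum is interpreted as equivalence to the full nonnegative expression rather than to a single dominant term.
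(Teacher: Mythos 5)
Your proposal is correct and follows essentially the same route as the paper: the identical mixture decomposition of $\overline{F_1\star F_2}$ into the three non-vanishing sign-conditioned pieces, Breiman's lemma (the exponential form of Lemma~\ref{lem:breiman}) for the cross terms $\overline{F_{i_1}^+\star F_{i_2}^-}\sim M_{F_{i_2}^-}(\alpha)\,\overline{F_{i_1}^+}$, the identity $p_i\overline{F_i^+}=\overline{F}_i$ on $[0,\infty)$, and the moment decomposition $M_{F_i}(\alpha)=p_iM_{F_i^+}(\alpha)+(1-p_i)M_{F_i^-}(\alpha)$ for the special cases. The only cosmetic deviation is that for part~\ref{lemnegvalitem1} the paper bounds the cross terms via the domination inequality $\overline{F_{i_1}^+\star F_{i_2}^-}\leq\overline{F_{i_1}^+}$ rather than invoking Breiman, which is equivalent in effect; your explicit checks that $p_i>0$, that $F_i^+\in\mathrm{ET}_\alpha$, and that nonnegativity licenses termwise asymptotic addition fill in details the paper leaves as ``straightforward calculations.''
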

\begin{proof}
	We start with the mixture representation
	$$
	\bar{F_1\star F_2}(x)=p_1p_2\bar{F_1^+\star F_2^+}(x)+p_1(1-p_2)\bar{F_1^+\star F_2^-}(x)+(1-p_1)p_2\bar{F_1^-\star F_2^+}(x), \quad x \geq 0.
	$$
	We can then use the equation $p_i\bar{F_i^+}(x)=\bar{F_i}(x)$, $x\geq 0$, and the following inequalities for $\{i_1,i_2\}=\{1,2\}$, 
	$$
	0=\bar{F_{i_1}^-\star F_{i_2}^-}(x) = \bar{F_{i_1}^-}(x)\leq \bar{F_{i_1}^+\star F_{i_2}^-}(x) \leq \bar{F_{i_1}^+}(x) \leq \bar{F_{i_1}^+\star F_{i_2}^+}(x) , \quad x\geq 0.
	$$
	Moreover, Lemma~\ref{lem:breiman}  can be  applied for mixed terms, yielding $\overline{F_{i_1}^+\star F_{i_2}^-}\sim M_{F_{i_2}^-}(\alpha)\overline{F_{i_1}^+}$, and then \eqref{eq:F1F2bothdominated} and \eqref{eq:F1F2conveq} follow from straightforward calculations.  To determine the behavior for special cases of $c_1$ and $c_2$, observe that $(1-p_i)M_{F_i^-}(\alpha)+p_iM_{F_i^+}(\alpha)=M_{F_i}(\alpha)$. 
\end{proof}

\subsection{Proof of lemmas in Section~\ref{sec:Proofs}}

\begin{proof}[Proof of Lemma~\ref{lem:taylor}]
	1. If $b_1=b_2=1$ then $\P(W=1)=0$ and $\bar{F}_W(1-s)=\bar{F}_Z(\tau^{-1}(1-s))$. By assumption, $\bar{F}_Z(\tau^{-1}(1-s)) = \ell_Z(1-\tau^{-1}(1-s))(1-\tau^{-1}(1-s))^{\alpha_Z}$, where $\ell_Z \in \RV_0^0$. Since  $1-\tau^{-1}(1-s) \in \RV_\gamma^0$ (Lemma~\ref{lem:gamleq1}) with limit zero, results on composition of regularly varying functions \citep[][Proposition~2.6~(iv)]{Resnick07} implies the result, with $\alpha_W = \alpha_Z\gamma$.\\
	
	\noindent
	2.a) If $b_1<1$ then 
	\begin{align}
	\bar{F}_W(1-s) - \P(W=1) = \P(Z\in[\tau_1^{-1}(1-s),b_1))+\P(Z\in (b_2,\tau_2^{-1}(1-s)]), \label{eq:FWFZ}
	\end{align}
	and since $Z$ has a Lebesgue density,
	\begin{align}
	&\P(Z\in[\tau_1^{-1}(1-s),b_1))+\P(Z\in (b_2,\tau_2^{-1}(1-s)])\notag\\ &\sim  f_Z(b_1)(b_1 - \tau_1^{-1}(1-s)) + f_Z(b_2)(\tau_2^{-1}(1-s) - b_2) \in \RV_\gamma^0, \label{eq:fZdecay}
	\end{align}
	again using Lemma~\ref{lem:gamleq1}.\\
	\noindent     
	2.b) A left-sided Taylor expansion of $\tau_1^{-1}$ about $1$ gives 
	\[
	\tau_1^{-1}(1-s) = b_1  - (\tau_1^{-1})'(1_{-}) s + O(s^2),
	\]
	where $O(s^2)/s$ uniformly tends to 0 as $s\to 0$, and similarly we can make a right-sided expansion for $\tau_2^{-1}(1-s)$.
	Hence, using~\eqref{eq:FWFZ} and~\eqref{eq:fZdecay}, $ \bar{F}_W(1-s) - \P(W=1) = s \ell(s)$ with $\lim_{s\to 0}\ell(s) = f_Z(b_1) (\tau_1^{-1})'(1_-) - f_Z(b_2)(\tau_2^{-1})'(1_-)$. Noting the link $1/\tau_1'(b_{1-}) = (\tau_1^{-1})'(1_-)$, similarly for $1/\tau_2'(b_{2+})$, gives Equation~\eqref{eq:L}.\\
	
	\noindent
	3. For the final part, we have 
	$$\bar{F}_{W_\wedge}(\zeta(1-s)) = \bar{F}_Z(\tau_1^{-1}(\zeta(1-s))) - \bar{F}_Z(1-\tau_1^{-1}(\zeta(1-s))) \sim  f_Z(1/2)\left\{ 1 - 2\tau_1^{-1}(\zeta(1-s))\right\}.$$
	Again by left-sided Taylor expansion of $\tau_1^{-1}$ about $\zeta = \tau(1/2)$, we have
	$$\tau_1^{-1}(\zeta(1-s)) = 1/2  - (\tau_1^{-1})'(\zeta_-) \zeta s + O(s^2),$$
	and so we obtain
	$\bar{F}_Z(\tau_1^{-1}(\zeta(1-s))) - \bar{F}_Z(1-\tau_1^{-1}(\zeta(1-s))) = s \ell_{\wedge}(s)$ with $\lim_{s\to 0}\ell_{\wedge}(s) = 2 f_{Z}(1/2)\zeta(\tau_1^{-1})'(\zeta_-)$. Noting again that $1/\tau_1'(1/2_-) = (\tau_1^{-1})'(\zeta_-)$, we arrive at Equation~\eqref{eq:Ltilde}.
\end{proof}

\begin{proof}[Proof of Lemma~\ref{lem:gamma}]
	The result for nonnegative $S$ and $V$ is found in Theorem~4(v) of \citet{Cline.1986}. The extension to negative values then follows from Lemma~\ref{lem:negvals}\eqref{lemnegvalitem1}.
\end{proof}

\begin{proof}[Proof of Lemma~\ref{lem:etratio}]
The result  is given in Theorem~6(ii,iii) of \citet{Cline.1986} for $\bar{F}(0)=1$ and $\bar{G}_1(0)=1$.
For point 1, the extension to negative values in $F$ and $G_1$ follows from observing that Theorem~6(iii) of \citet{Cline.1986} implies
\begin{equation}
\bar{F\star G_1}\sim \bar{F^+\star G_1^+}, \qquad x\rightarrow\infty,
\end{equation}
where $F^+$ is obtained from $F$ by setting $F^+(0)=F(0)$ and $\bar{F^+}(0)=1$, and the same construction is taken for $G_1^+$; i.e., $F^+$ and $G_1^+$ arise from projecting negative values to $0$. For point 2, the extension to negative values can be shown using Lemma~\ref{lem:negvals}\eqref{lemnegvalitem2}.  Indeed, the same limit  $M_{G_1}(\alpha)$ arises for $\bar{F\star G_1}(x)/\bar{F}(x)$ if we project negative values in $F$ and $G_1$ to $0$ or not.
\end{proof}

\section{Tail classes and examples}
\label{app:dist}
Definitions of tail classes are given in Section \ref{sec:Notation}. The following lemma summarizes important relationships between such tail classes. In this section, we refer to the class of heavy-tailed distributions by $\mathrm{HT}$, and to superheavy-tailed distributions by $\mathrm{SHT}$.

\begin{lemma}[Relationships between tail classes]
The following relationships between distribution classes hold: 	
	\begin{enumerate}
		\item $\mathrm{RV}_\alpha^\infty\subset \mathrm{CE}_0$ for $\alpha>0$,
        \item $\mathrm{ET}_0\subsetneq \mathrm{HT}$.
		\item For $\mathrm{ET}_\alpha$ with $\alpha>0$, we have:
		\begin{itemize}
					\item $F(\exp(\cdot))\in \mathrm{ET}_\alpha \Leftrightarrow F \in \mathrm{RV}_\alpha^\infty$,
		\item $\mathrm{CE}_\alpha\subset \mathrm{ET}_\alpha$,
		\item $\mathrm{ET}_{\alpha,\beta>-1}\cap \mathrm{CE}=\emptyset$.
	\end{itemize}
   \item For $\mathrm{WT}_\beta$, we have:
   \begin{itemize}
   	\item $\mathrm{WT}_1\subset \bigcup_{\alpha>0}\mathrm{ET}_\alpha$,
   	\item $\mathrm{WT}_\beta\subset\mathrm{CE}_0$ for $\beta<1$,
   	\item $\mathrm{LWT}_\beta\subset \mathrm{SHT}$ for $\beta <1$.
   \end{itemize} 
   \item By denoting $F_1\prec F_2$ if  $\overline{F}_1(x)/\overline{F}_2(x)\rightarrow 0$ for $x\rightarrow\infty$, we have: 
   \begin{itemize}
   	\item If $\tilde{\alpha}<\alpha$, then $\mathrm{WT}_{\beta>1}\prec \mathrm{ET}_\alpha \prec \mathrm{ET}_{\tilde{\alpha}} \prec \mathrm{WT}_{\beta <1}\prec \mathrm{LWT}_{\beta>1}\prec \mathrm{RV}_{\alpha>0}^\infty\prec \mathrm{RV}_{\tilde{\alpha}}^\infty\prec \mathrm{SHT}$.
   	\item $\mathrm{CE}_{\alpha>0}\prec \mathrm{ET}_{\tilde{\alpha},\beta}$ for $\tilde{\alpha}\leq \alpha$ and any $\beta>0$.
   \end{itemize}
	\end{enumerate}
\end{lemma}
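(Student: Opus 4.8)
The plan is to treat the five items largely separately, reducing most of them either to a direct manipulation of the definitions or to a comparison of the leading-order behaviour of $-\log\bar F$, and to invoke classical subexponentiality results where reproving them would be wasteful. A useful organising observation throughout is that, for a common upper endpoint at infinity, $F_1\prec F_2$ is equivalent to $(-\log\bar F_1(x))-(-\log\bar F_2(x))\to+\infty$, so that tail comparisons become comparisons of growth rates of $-\log\bar F$.

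For items~1--4 I would argue as follows. Item~1 ($\RV_\alpha^\infty\subset\mathrm{CE}_0$ for $\alpha>0$) is the statement that positively-indexed regularly varying tails are subexponential; I would note that $\bar F\in\RV_{-\alpha}^\infty$ is flat at the additive scale, so $F\in\mathrm{ET}_0$, and then cite the classical subexponentiality $\overline{F\star F}/\bar F\to2$ (e.g.\ \citet{Embrechtsetal1997}). For item~2, the inclusion $\mathrm{ET}_0\subset\mathrm{HT}$ follows by writing $\log\bar F(t+x)-\log\bar F(t)\to0$, which forces $-\log\bar F(t)=o(t)$ and hence $e^{\lambda t}\bar F(t)=\exp\{t(\lambda+\log\bar F(t)/t)\}\to\infty$ for every $\lambda>0$; strictness requires a heavy-tailed $F$ for which $\bar F(t+x)/\bar F(t)$ fails to converge, which I would supply by a tail whose log-increments oscillate (alternating near-flat and near-exponential stretches on growing intervals) while keeping $-\log\bar F(t)=o(t)$. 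The three bullets of item~3 are essentially definitional: the equivalence $F(\exp(\cdot))\in\mathrm{ET}_\alpha\Leftrightarrow F\in\RV_\alpha^\infty$ is the remark $F\in\mathrm{ET}_\alpha\Leftrightarrow\bar F(\log(\cdot))\in\RV_{-\alpha}^\infty$ applied to $G=F(\exp(\cdot))$, since $\bar G(\log t)=\bar F(t)$; $\mathrm{CE}_\alpha\subset\mathrm{ET}_\alpha$ is built into Definition~\ref{def:conveq}; and $\mathrm{ET}_{\alpha,\beta>-1}\cap\mathrm{CE}=\emptyset$ follows because $e^{\alpha x}\bar F(x)=r(x)\in\RV_\beta^\infty$ with $\beta>-1$ is non-integrable at infinity, forcing the $\alpha$-exponential moment $\int e^{\alpha x}\,\d F(x)=\infty$ and contradicting the finiteness required by $\mathrm{CE}_\alpha$, while the exponential rate $\alpha$ is unique so no other $\mathrm{CE}$ index is possible. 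Item~4 is handled by inserting the explicit tails: $\mathrm{WT}_1$ gives $\bar F(t+x)/\bar F(t)\to e^{-\alpha x}$, hence membership in $\bigcup_\alpha\mathrm{ET}_\alpha$; $\mathrm{WT}_{\beta<1}\subset\mathrm{CE}_0$ is the classical subexponentiality of Weibull tails of shape below one (cite); and $\mathrm{LWT}_{\beta<1}\subset\mathrm{SHT}$ is immediate from that, since $F(\exp(\cdot))\in\mathrm{WT}_{\beta<1}\subset\mathrm{HT}$ is exactly the definition of $F\in\mathrm{SHT}$.

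For item~5 I would unify the whole ordering through the leading asymptotics of $-\log\bar F$: namely $\alpha x^\beta$ for $\mathrm{WT}_\beta$, $\alpha x$ for $\mathrm{ET}_\alpha$ ($\alpha>0$), $\alpha(\log x)^\beta$ for $\mathrm{LWT}_\beta$, $\alpha\log x$ for $\RV_\alpha^\infty$, and $o(\log x)$ for $\mathrm{SHT}$. Each consecutive link of the first chain then reduces to the strict ordering of growth rates $x^{\beta>1}\gg x\gg x^{\beta<1}\gg(\log x)^{\beta>1}\gg\log x\gg o(\log x)$, with ties within a functional form broken by the constant (using $\tilde\alpha<\alpha$, taken positive). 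The only genuinely finer comparison is the second bullet, $\mathrm{CE}_{\alpha>0}\prec\mathrm{ET}_{\tilde\alpha,\beta}$, at the boundary $\tilde\alpha=\alpha$: there both tails satisfy $-\log\bar F\sim\alpha x$, so instead I would use that $F_1\in\mathrm{CE}_\alpha$ has a finite $\alpha$-exponential moment and hence $e^{\alpha x}\bar F_1(x)\to0$, whereas $e^{\tilde\alpha x}\bar F_2(x)=r(x)\to\infty$ for $r\in\RV_\beta^\infty$ with $\beta>0$, so their ratio tends to $0$; for $\tilde\alpha<\alpha$ the crude leading-order argument already suffices.

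The main obstacle I anticipate is bookkeeping the hypotheses correctly in item~5 — in particular confirming that the indices in the chain are positive where needed, so that $\RV_{\tilde\alpha}^\infty$ stays disjoint from $\mathrm{SHT}$ and $\mathrm{ET}_{\tilde\alpha}$ remains light-tailed — together with the equality case $\tilde\alpha=\alpha$, which cannot be settled by the $-\log\bar F$ heuristic and must appeal to the defining moment property of $\mathrm{CE}_\alpha$ against the blow-up of the regularly varying factor $r$. The subexponentiality statements in items~1 and~4 are standard, and I would cite rather than reprove them.
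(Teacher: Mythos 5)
Your proposal cannot be checked against an in-paper argument, because the paper does not prove this lemma at all: it is stated in Appendix~\ref{app:dist} as a compendium of standard facts, with the supporting literature cited elsewhere in the text. Judged on its own merits, your proof is correct, and its organising device --- translating each relation $F_1\prec F_2$ into divergence of $(-\log\overline{F}_1(x))-(-\log\overline{F}_2(x))$ and reading off the leading asymptotics $\alpha x^\beta$, $\alpha x$, $\alpha(\log x)^\beta$, $\alpha\log x$ and $o(\log x)$ for the five classes --- settles the whole of item~5 uniformly, with the one genuinely delicate boundary case $\tilde\alpha=\alpha$ handled correctly by playing the finite $\alpha$-exponential moment built into $\mathrm{CE}_\alpha$ (which forces $e^{\alpha x}\overline{F}_1(x)\to 0$) against $e^{\alpha x}\overline{F}_2(x)=r(x)\to\infty$ for $r\in\RV_\beta^\infty$, $\beta>0$. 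Two points deserve emphasis. First, your identification of $\mathrm{HT}$ with $-\log\overline{F}(t)=o(t)$, and hence of $\mathrm{SHT}$ with $-\log\overline{F}(x)=o(\log x)$, is exact only because the paper's definition demands the full limit $e^{\lambda t}\overline{F}(t)\to\infty$ rather than a limsup; under the weaker convention these $o(\cdot)$ characterisations fail, so your argument correctly exploits the stronger definition. Second, your positivity caveats are substantive, not bookkeeping: $\mathrm{ET}_{\tilde\alpha}\prec\mathrm{WT}_{\beta<1}$ and $\RV_{\tilde\alpha}^\infty\prec\mathrm{SHT}$ are false at $\tilde\alpha=0$ (indeed $\mathrm{WT}_{\beta<1}\subset\mathrm{ET}_0$), so the lemma's hypothesis $\tilde\alpha<\alpha$ must be read with $\tilde\alpha>0$, as you note. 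The remaining ingredients --- subexponentiality of positively-indexed regularly varying tails and of Weibull tails with shape below one; the integration-by-parts identity showing $\E(e^{\alpha X})<\infty$ if and only if $e^{\alpha x}\overline{F}(x)$ is integrable, which combined with non-integrability of $r\in\RV_{\beta>-1}^\infty$ and uniqueness of the exponential rate gives $\mathrm{ET}_{\alpha,\beta>-1}\cap\mathrm{CE}=\emptyset$; and an oscillating heavy- but not long-tailed construction (flat stretches broken by unit-length exponential drops at times $2^n$, so that $-\log\overline{F}(t)=O(\log t)$) for strictness in item~2 --- are all standard and correctly deployed, and citing rather than reproving them is appropriate given that the paper itself treats the entire lemma as known.
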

We recall the membership in tail classes for well-known parametric distribution families in Table~\ref{tab:dists}, see \citet{Johnson.al.1994,Johnson.al.1995} for reference about parameters. Here we abstract away from the usual parameter symbols of these distributions to avoid conflicting notations with general tail parameters. We refer parameters as $\mathrm{scl}$  and $\mathrm{loc}$ if $\mathrm{scl}\times X+\mathrm{loc}$ has scale $\mathrm{scl}$ and location $\mathrm{loc}$, where $X$ has scale $1$ and location $0$. Another parameter  $\mathrm{shp}$ may be related to shape for some distributions. 
\begin{table} 
\begin{center}
\begin{tabular}{|l|c|c|c|c|c|c|} \hline
  & {$\RV_\alpha^0$} & {$\mathrm{WT}_\beta$} &   {$\mathrm{ET}_\alpha$}   & {$\mathrm{HT}$} & {$\mathrm{LWT}_\beta$} & {$\RV_\alpha^\infty$}  \\
                                 & & & & &  &\\ \hline
normal & & $\beta=2$ & &  & & \\
log-normal & &  & & \ding{51} & $\beta=2$ & \\
exponential & & $\beta=1$ &  $\alpha=\mathrm{scl}$  & & & \\
gamma & & $\beta=1$ &  $\alpha=\mathrm{scl}$ &  & & \\
inverse normal & & $\beta=1$ & $\alpha=\frac{\mathrm{shp}}{2\,\mathrm{mean}^2}$  & & & \\
logistic & & $\beta=1$ &  $\alpha=\mathrm{scl}$ & & & \\
log-logistic & & $\beta=1$ &  & \ding{51} & & $\alpha=\mathrm{shp}$ \\
Gumbel & & $\beta=1$ &  $\alpha=\mathrm{scl}$ &  & & \\
Weibull &  & $\beta=\mathrm{shp}$ &  $\mathrm{shp}=1,\alpha=\mathrm{scl}$ & $\mathrm{shp}<1$ & & \\
$t$ & & & & \ding{51} & $\beta=1$ & $\alpha=\mathrm{shp}$ \\
Pareto & &  & & \ding{51}& $\beta=1$ & $\alpha=\mathrm{shp}$\\
Fr\'echet & & & &  \ding{51} & $\beta=1$ & $\alpha=\mathrm{shp}$ \\
stable & & $\beta=\mathrm{shp}=2$ & &  $\mathrm{shp}<2$ & $\mathrm{shp}<2$,$\beta=1$ & $\alpha=1/\mathrm{shp}>1/2$\\
$F(\mathrm{shp1},\mathrm{shp2})$ &  & & & \ding{51} & & $\alpha=2/\mathrm{shp2}$ \\
uniform & $\alpha=1$  & & & & & \\
Beta$(\mathrm{shp1},\mathrm{shp2})$ & $\alpha=\mathrm{shp2}$ &  & & & & \\
triangular & $\alpha=2$ & &  & & & \\
GEV & $\alpha=1/\mathrm{shp}<0$ & $\mathrm{shp}=0,\beta=1$   & $\mathrm{shp}=0,\alpha=\mathrm{scl}$ & $\mathrm{shp}>0$ & $\mathrm{shp}>0,\beta=1$ & $\alpha=1/\mathrm{shp}$ \\
\hline
\end{tabular}
\end{center}
\caption{Membership in tail classes (columns) for distribution families (rows). The column $\RV_\alpha^0$ refers to the behavior of $\overline{F}(x^\star-\cdot)$ when $x^\star<\infty$. All heavy-tailed distributions in this table are also subexponential. All distributions in $\mathrm{ET}_\alpha$ listed in this table are in $\mathrm{ET}_{\alpha,\beta}$ except for the inverse normal; the inverse normal is in $\mathrm{CE}_\alpha$. 
The parameter $\mathrm{shp}$ of the stable distributions is here chosen as their stability parameter.}
  \label{tab:dists}
\end{table}

\newpage

\section{Additional illustrations}
\label{app:addill}
Figure~\ref{fig:nutaukappa2} illustrates further examples of norms $\nu$ and related functions $\tau(z)$ and $\tau(1-z)$, as defined in Section~\ref{sec:Constrained}.
\label{sec:Supporting}
\begin{figure}[H]
\centering
\includegraphics[width=0.45\textwidth]{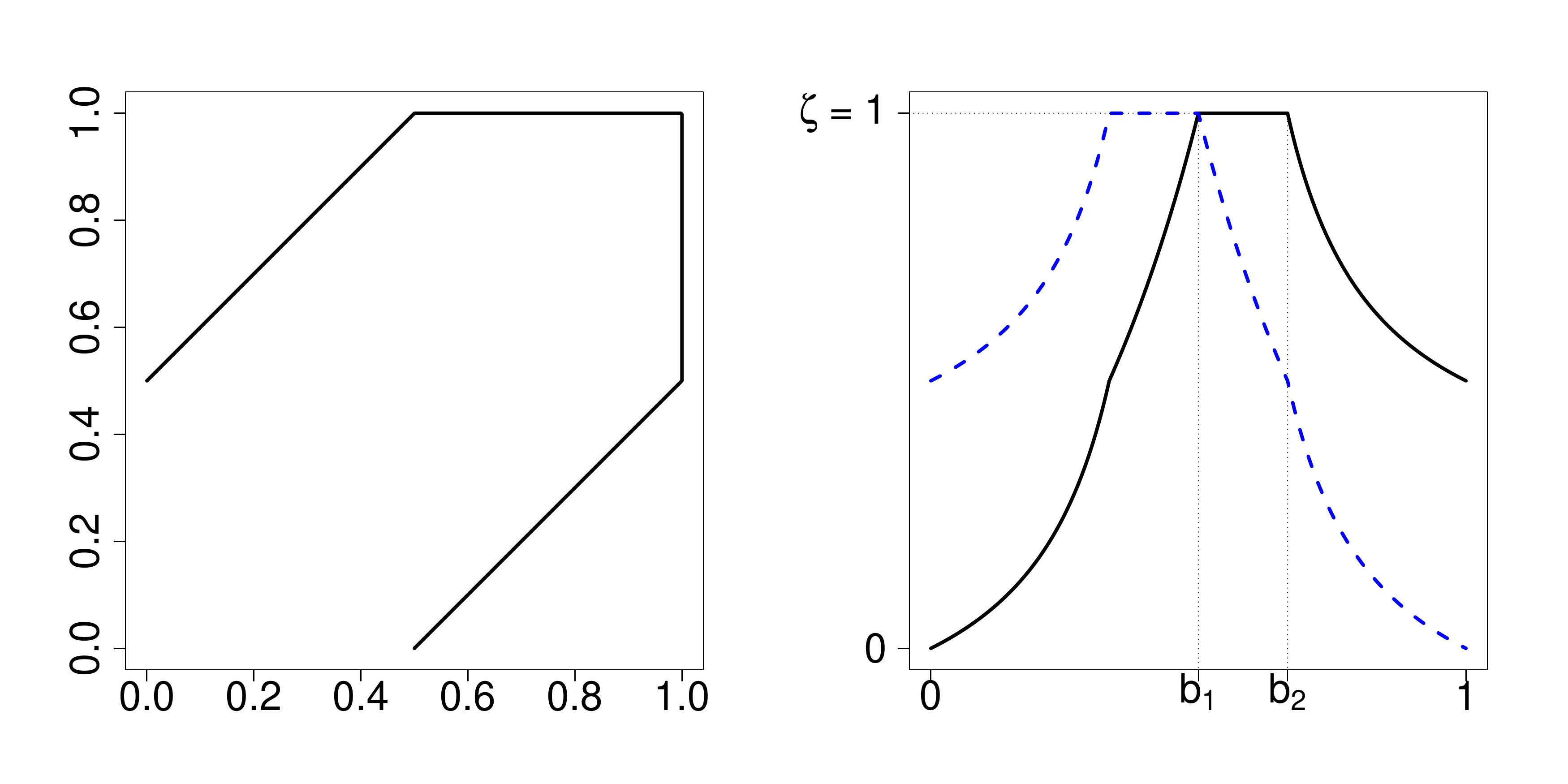}
\includegraphics[width=0.45\textwidth]{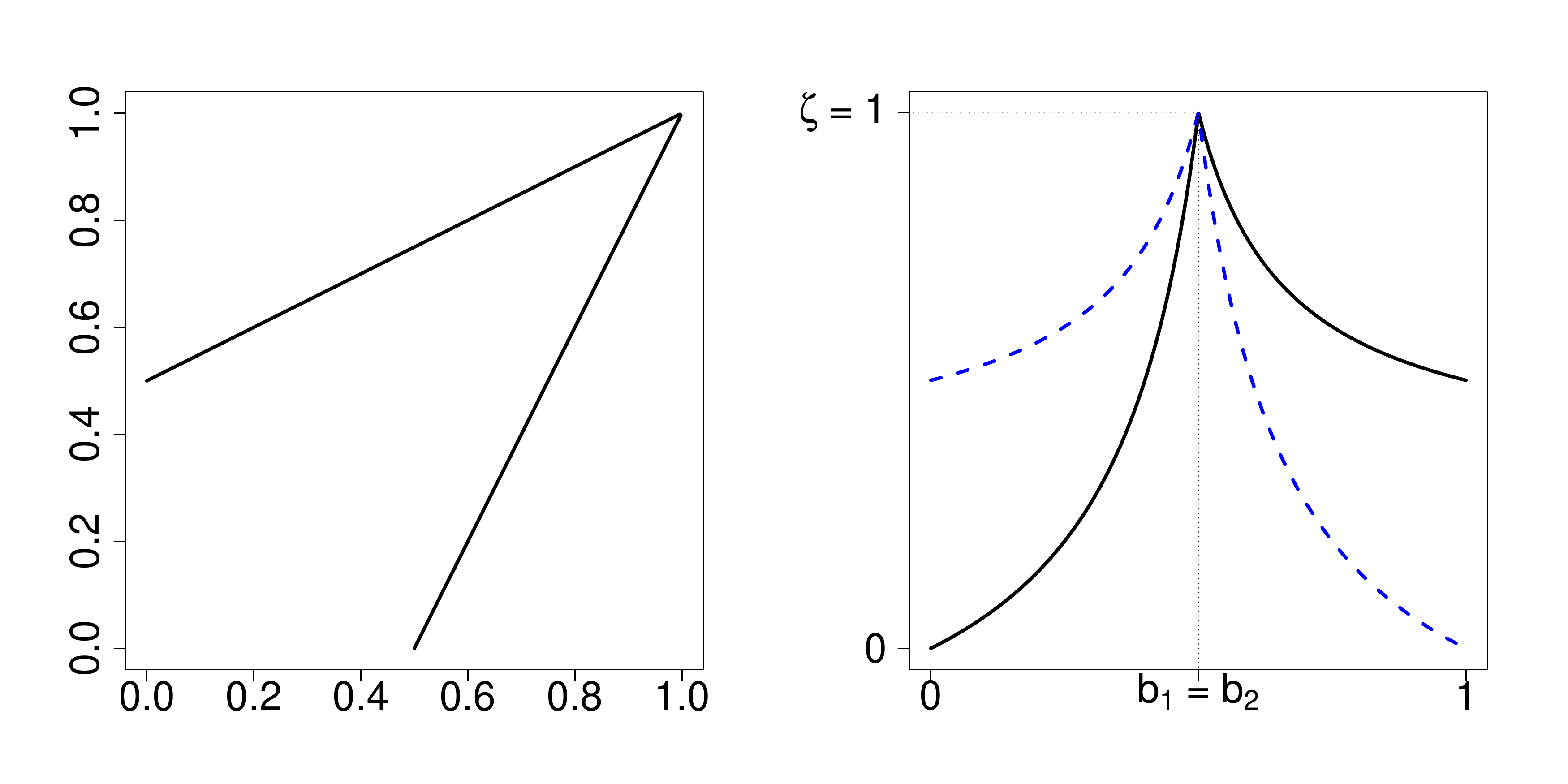}\\
\includegraphics[width=0.45\textwidth]{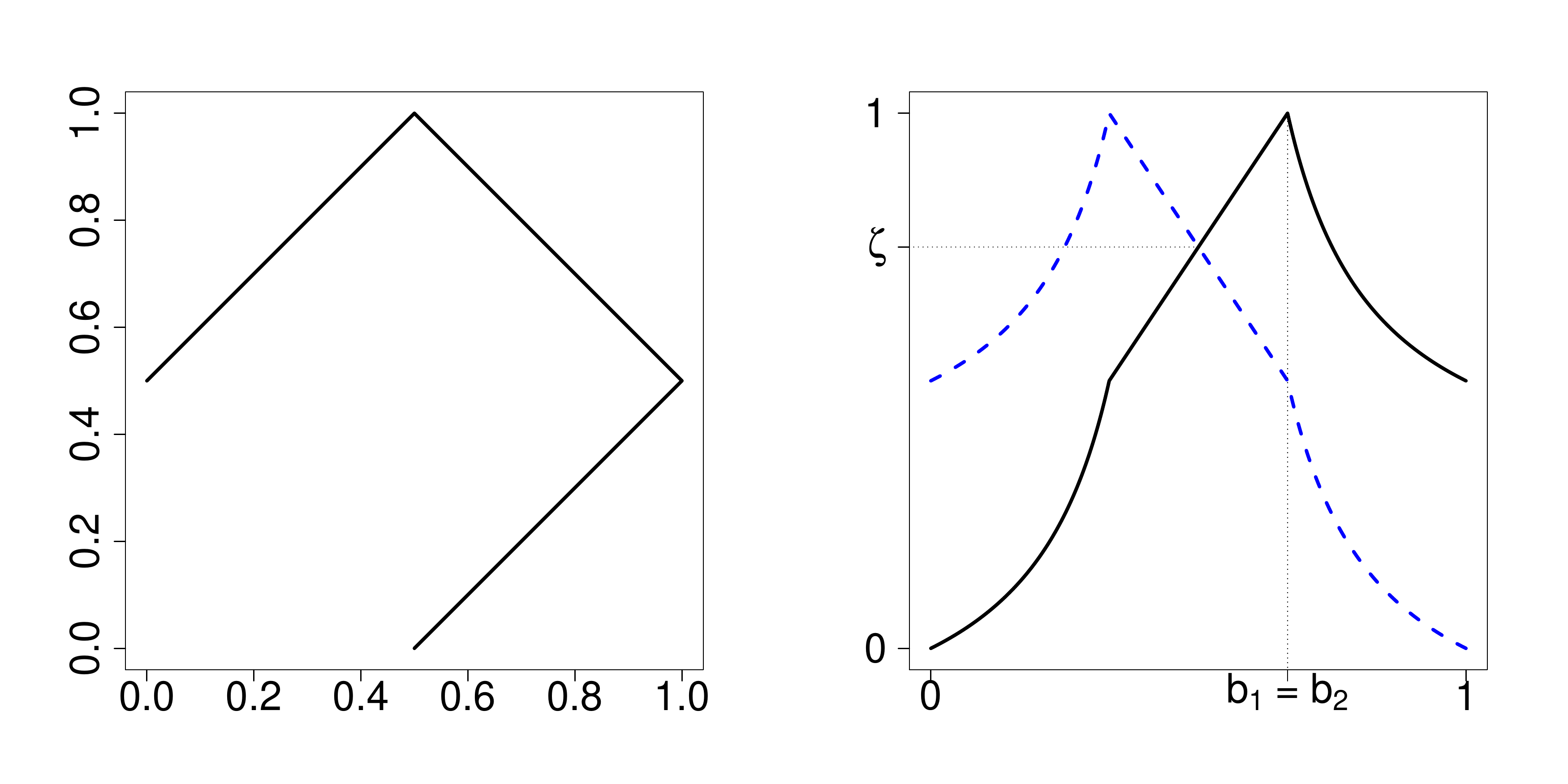}
\includegraphics[width=0.45\textwidth]{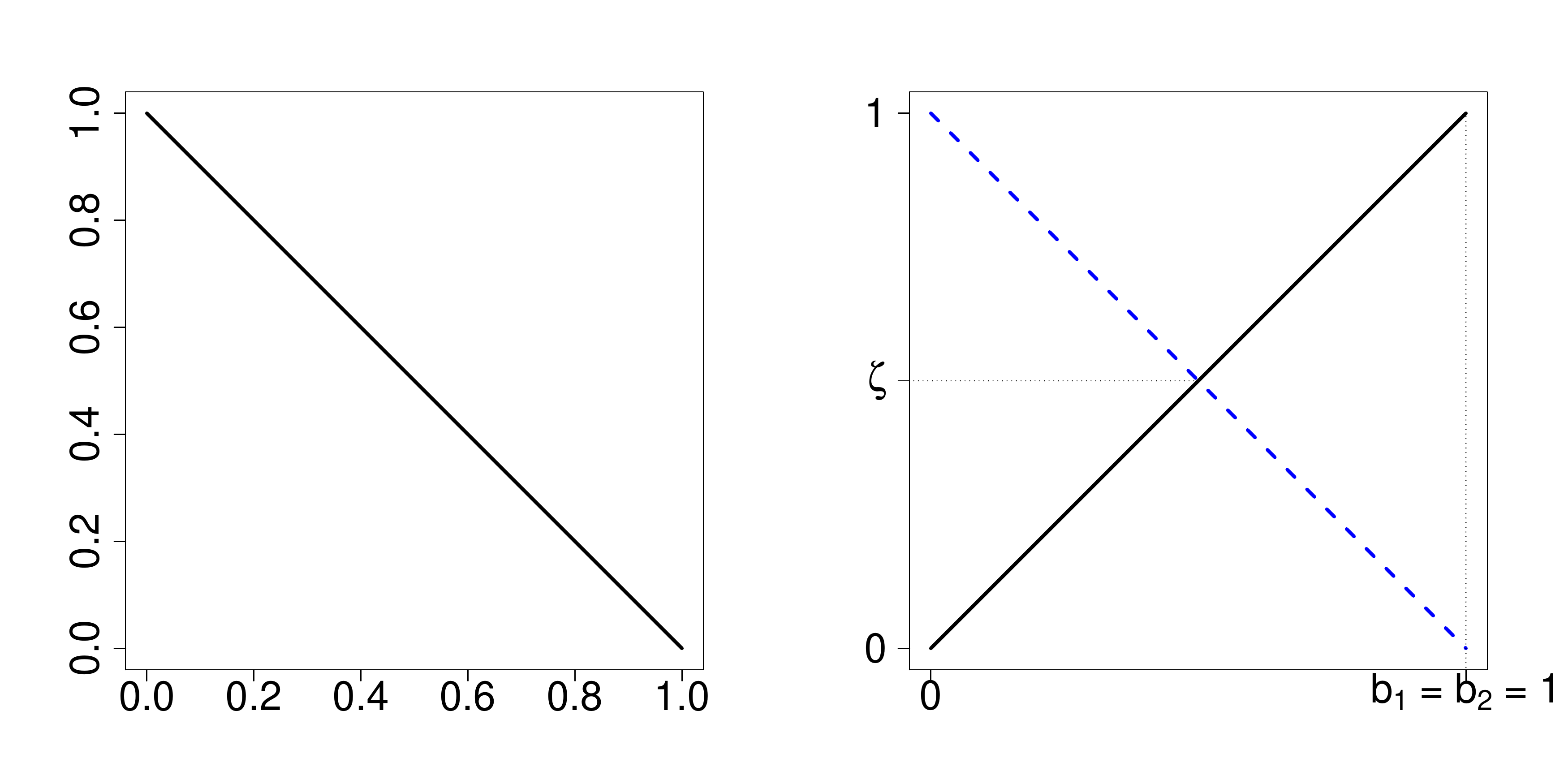}
\caption{Further illustration of different norms $\nu$ and their related functions $\tau(z)$ (solid line) and $\tau(1-z)$ (dashed line).}
\label{fig:nutaukappa2}
\end{figure}

\bibliographystyle{apalike}
\bibliography{RSC}

\end{document}